\newtheorem{lem}{Lemma}
\newtheorem{defn}[lem]{Definition}
\newtheorem{prop}[lem]{Proposition}
\newtheorem{thm}[lem]{Theorem}
\newtheorem{conj}[lem]{Conjecture}
\newtheorem{cor}[lem]{Corollary}
\newtheorem*{claim}{Claim}
\newcommand{\ddbar}{i\partial\overline{\partial}}
\newcommand{\dbar}{\overline{\partial}}
\renewcommand{\phi}{\varphi}
\renewcommand{\epsilon}{\varepsilon}
\renewcommand{\l}{\mathbf{l}}
\title{On blowing up extremal K\"ahler manifolds}
\author{G\'abor Sz\'ekelyhidi}
\date{}
\begin{document}

\begin{abstract}
	We show that the blowup of
	an extremal K\"ahler manifold at a relatively
	stable point in the sense of GIT admits an
	extremal metric in K\"ahler classes that make the exceptional divisor
	sufficiently small, extending a result of Arezzo-Pacard-Singer. We
	also study the K-polystability of these blowups, sharpening a
	result of Stoppa in this case. As an application we show that 
	the blowup of a K\"ahler-Einstein manifold at a point admits a
	constant scalar curvature K\"ahler metric in classes that make
	the exceptional divisor small, if it is K-polystable with
	respect to these classes. 
\end{abstract}
\maketitle

\section{Introduction}
Let $(M,\omega)$ be a compact K\"ahler manifold of dimension $m$, 
such that $\omega$ is
an extremal metric in the sense of Calabi~\cite{Cal82}. 
This means that the gradient of the scalar curvature
of $\omega$ is a holomorphic vector field, so important special cases
are constant scalar curvature (cscK) metrics and K\"ahler-Einstein
metrics. Following
Arezzo-Pacard~\cite{AP06,AP09} and
Arezzo-Pacard-Singer~\cite{APS06}
we study the problem of constructing extremal
metrics on the blowup of $M$ in one or more points, in K\"ahler classes
which make the exceptional divisors sufficiently small. To state the
result precisely, we make a few definitions.  
The condition that $\omega$ is extremal
implies that the Hamiltonian vector field $X_\mathbf{s}$ corresponding
to the scalar curvature $\mathbf{s}(\omega)$ is a Killing field.
Let $G$ be the group of Hamiltonian
isometries of $(M,\omega)$, 
and write $\mathfrak{g}$ for its Lie
algebra. We fix a moment map
\[ \mu : M \to \mathfrak{g}^*\]
for the action of $G$ on $M$, such that for any vector field 
$X\in \mathfrak{g}$ the function $\langle \mu, X\rangle$ has zero mean
on $M$. We will also identify $\mathfrak{g}$ with its dual
$\mathfrak{g}^*$ using the inner product
\[ \langle X,Y\rangle = \int_M \langle \mu, X\rangle\langle \mu,
Y\rangle \omega^n.\]
for $X,Y\in\mathfrak{g}$, so we will think of elements in
$\mathfrak{g}^*$ as vector fields. Our first main result is then as follows.

\begin{thm}\label{thm:main}
	Choose distinct points $p_1,\ldots, p_n\in M$ and numbers
	$a_1,\ldots, a_n>0$ such that the vector fields $X_\mathbf{s}$
	and  $\sum\limits_i a_i^{m-1}\mu(p_i)$
	vanish at the $p_i$. Then there exists $\epsilon_0 > 0$ such that for
	$\epsilon\in(0,\epsilon_0)$ the blowup $Bl_{p_1,\ldots,p_n}M$ admits an
	extremal metric in the K\"ahler class 
	\[ \pi^*[\omega] - \epsilon^2\left(a_1[E_1] +
	\ldots + a_n[E_n]\right),\]
	where $E_i$ are the
	exceptional divisors and $\pi$ is the blowdown map to $M$.
\end{thm}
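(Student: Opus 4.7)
The plan is to adapt the Arezzo-Pacard-Singer gluing analysis~\cite{APS06} to the equivariant setting in which the reductive part of $\mathrm{Aut}(M,\omega)$ may be larger than the line spanned by $X_\mathbf{s}$. The strategy has three steps: (i) construct an approximately extremal K\"ahler metric $\omega_\epsilon$ on the blowup by gluing scaled Burns-Simanca metrics into $\pi^*\omega$; (ii) invert the linearization of the extremal operator on weighted H\"older spaces, modulo the lifts to the blowup of the Hamiltonian potentials for $\mathfrak{g}$; (iii) use the balancing hypothesis, together with extra parameters in the gluing, to kill the residual finite-dimensional obstruction and close a fixed-point argument.

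For step (i), I would work in normal holomorphic coordinates near each $p_i$ and replace a small ball by the Burns-Simanca scalar-flat K\"ahler metric on $Bl_0\mathbb{C}^m$ rescaled by $\epsilon^2 a_i$, interpolating K\"ahler potentials across an annulus. Since $X_\mathbf{s}(p_i)=0$, the field $X_\mathbf{s}$ lifts to a Killing field on $Bl_{p_1,\ldots,p_n}M$, so one may meaningfully look for an extremal metric whose extremal vector field is close to $X_\mathbf{s}$. The resulting $\omega_\epsilon$ lies in the prescribed K\"ahler class, and in suitable weighted H\"older norms the discrepancy $\mathbf{s}(\omega_\epsilon) - \langle\mu_\epsilon, X_\mathbf{s}\rangle - \mathrm{const}$ has size $O(\epsilon^{2m-2})$, concentrated in the gluing annuli. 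For step (ii), write $\omega_\phi = \omega_\epsilon+\ddbar\phi$ and consider $\mathcal{F}(\phi):=\mathbf{s}(\omega_\phi) - \langle\mu_\phi, X_\mathbf{s}\rangle$; its linearization at $\phi=0$ is, to leading order, the Lichnerowicz operator, whose approximate kernel is spanned by lifts of the Hamiltonian potentials $h_Y$ for $Y\in\mathfrak{g}$. A weighted analysis in the style of~\cite{AP06,APS06} should yield a right inverse modulo this kernel with norm $O(\epsilon^{-\kappa})$ for some $\kappa<2m-2$, and a contraction-mapping argument then produces a potential $\phi$ together with a residual vector field $\xi\in\mathfrak{g}$ such that
\[ \mathbf{s}(\omega_\phi) - \langle\mu_\phi, X_\mathbf{s}+\xi\rangle = \mathrm{const.}\]

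The main difficulty is step (iii): forcing $\xi$ to lie in the subalgebra $\mathfrak{h}_0 := \{Y\in\mathfrak{g} : Y(p_j)=0 \text{ for all } j\}$ of vector fields descending to the blowup, since only then is $X_\mathbf{s}+\xi$ the extremal vector field of a genuine extremal metric on $Bl_{p_1,\ldots,p_n}M$. A careful expansion of $\xi$ in $\epsilon$, essentially a local version of Stoppa's computation of the change of Futaki invariant under a blowup, should give
\[ \xi = C\,\epsilon^{2m-2}\sum_i a_i^{m-1}\mu(p_i) + o(\epsilon^{2m-2}),\]
under the identification $\mathfrak{g}^*\cong\mathfrak{g}$, and the balancing hypothesis then says precisely that the leading term already belongs to $\mathfrak{h}_0$. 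To remove the higher-order discrepancy I would introduce additional parameters, most naturally by varying the $p_i$ along their $G$-orbits (giving biholomorphic blowups, so the extremal metric transports back to $Bl_{p_1,\ldots,p_n}M$), and show a transversality statement: the derivative of parameters $\mapsto \xi\pmod{\mathfrak{h}_0}$ is surjective to leading order. The implicit function theorem then produces, for each small $\epsilon$, positions and a potential for which $\xi\in\mathfrak{h}_0$, yielding the desired extremal metric. The chief technical obstacle is establishing the expansion of $\xi$ and the transversality estimate, both of which require careful control of the gluing errors under the weighted inverse of the Lichnerowicz operator.
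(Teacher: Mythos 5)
Your overall strategy coincides with the paper's: solve a relaxed equation that allows a finite-dimensional residual $f$ (this is Proposition~\ref{prop:gluing}), recognize $f$ to leading order as $\epsilon^{2m-2}c_m\mu(p)$ plus constants and a lower-order error, and then run a finite-dimensional perturbation argument to move the point(s) so that the residual lands in the stabilizer subalgebra, giving a genuine extremal metric after pulling back through the induced biholomorphism of blowups. This separation is exactly the paper's new ingredient, so your plan is on target; however two places need significant repair.

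First, you propose varying the $p_i$ along their $G$-orbits. This gains nothing: $G$ acts by Hamiltonian isometries of $(M,\omega)$, so the blowup problem at $g\cdot p$ is isometrically conjugate to the one at $p$, and $\mu(g\cdot p)=\mathrm{Ad}^*_g\mu(p)$ merely conjugates the obstruction rather than moving it transversally. To get the needed transversality you must move along the \emph{complexified} orbit $G^c\cdot p$, i.e.\ the directions $\frac{d}{dt}\big|_{t=0}e^{it\xi}\cdot p$ for $\xi\in\mathfrak{g}$. It is precisely in these directions that the derivative of the moment map equals the Riemannian pairing $\langle\xi,\cdot\rangle$ on orbit directions, which is positive definite on $\mathfrak{h}_p^\perp$ and supplies the surjectivity you are asserting. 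The paper makes this explicit in Proposition~\ref{prop:deform}.

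Second, the transversality claim --- that the derivative of parameters $\mapsto \xi \pmod{\mathfrak{h}_0}$ is surjective to leading order --- is the heart of the argument and must be proved, not merely asserted. The paper isolates it cleanly: Proposition~\ref{prop:deform} shows that if $\mu(x)\in\mathfrak{h}_x$ and $\mu_\epsilon\to\mu$ in $C^0$ with $\mu_\epsilon(y)$ commuting with the stabilizer $\mathfrak{h}_y$ for each $y$, then there is a nearby $y=e^{i\xi}x$ with $\mu_\epsilon(y)\in\mathfrak{h}_y$. The proof reduces to the case $\mu(x)=0$ by passing to the centralizer of the torus generated by $\mu(x)$, and deals with the varying stabilizer by composing orthogonal projections $\Pi_x\Pi_{e^{i\xi}x}$ --- a point your sketch does not address but would have to. It then applies a degree argument with $DF_0$ being the restriction of $D\mu$ to $\mathfrak{h}_x^\perp$, using positivity of the moment map's derivative along complexified orbit directions. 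Finally, note that the paper restricts the residual $f$ to $\overline{\mathfrak{h}}$, the potentials of fields commuting with a fixed maximal torus $T$ of the stabilizer of $p$, rather than all of $\mathfrak{g}$; this is needed both for the $T$-invariant weighted gluing analysis and for the commutation hypothesis in Proposition~\ref{prop:deform}, so working with all of $\mathfrak{g}$ as you propose would require extra justification.
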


To compare with the earlier results, we now describe the theorem proved
by Arezzo-Pacard-Singer in \cite{APS06}. As above $(M,\omega)$ is an
extremal K\"ahler manifold. We choose $K$ to be any
group of Hamiltonian isometries of $(M,\omega)$ such that its Lie
algebra $\mathfrak{k}$ contains the vector field $X_\mathbf{s}$. 
Now $G$ is the
group of Hamiltonian isometries commuting with $K$, and $\mathfrak{g}$
is its Lie algebra. 
We define $\mathfrak{g}' = \mathfrak{g}\cap \mathfrak{k}$, and
define $\mathfrak{g}''$ to be the orthogonal complement, so
\[ \mathfrak{g} = \mathfrak{g}' \oplus \mathfrak{g}''.\]
Then the most general result in \cite{APS06} is the following. 
\begin{thm}[Arezzo-Pacard-Singer] \label{thm:APS}
	With notation as above, let $p_1,\ldots, p_n\in M$ be points
	where each vector field in $\mathfrak{k}$ vanishes. Suppose that
	\begin{enumerate}
		\item[(i)](Balancing condition) we choose
			$a_1,\ldots,a_n>0$ such that
			\[ \sum_{j=1}^n a_j^{m-1}\mu(p_j) \in
			\mathfrak{g}'^*,\]
		\item[(ii)](Genericity condition) the projections of
			$\mu(p_1),\ldots,\mu(p_n)$ onto
			$\mathfrak{g}''^*$ span $\mathfrak{g}''^*$,
		\item[(iii)](General position condition) there is no
			nontrivial element of 
			$\mathfrak{g}''$ that vanishes at
			$p_1,\ldots,p_n$. 
	\end{enumerate}
	Then there exists $\epsilon_0>0$ such that for all $\epsilon\in
	(0,\epsilon_0)$ there is a $K$-invariant extremal K\"ahler
	metric on the blowup $Bl_{p_1,\ldots,p_n}M$ whose K\"ahler class
	is
	\[ [\omega] -
	\epsilon^2\left(a_1[E_1]+
	\ldots+a_n[E_n]\right), \]
	where the $E_i$ are the
	exceptional divisors. 
\end{thm}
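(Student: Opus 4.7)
The strategy is a singular perturbation and gluing argument in the spirit of Arezzo--Pacard. I would construct an approximately extremal, $K$-invariant K\"ahler metric $\omega_\epsilon$ on $Bl_{p_1,\ldots,p_n}M$ in the prescribed class, then apply an implicit function theorem to perturb it to a genuine extremal metric, with the conditions (i)--(iii) appearing precisely to overcome the linearized obstructions.

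\emph{Approximate solution.} On a small neighborhood of each $p_i$ I would replace $\omega$ by $\omega+\ddbar\psi_{\epsilon,i}$, where $\psi_{\epsilon,i}$ is modeled on a Green's function for the Lichnerowicz operator $\mathcal{L}_\omega$ with a prescribed singularity tuned to match the rescaled Burns--Simanca ALE scalar-flat metric $\epsilon^2 a_i^2\eta$ on $Bl_0\mathbb{C}^m$, glued across an annulus of radius $\sim\epsilon^\alpha$ for a suitable $\alpha\in(0,1)$. Since the $p_i$ are $K$-fixed and $K$ acts linearly at each $p_i$, the gluing can be performed $K$-equivariantly. The resulting $\omega_\epsilon$ has scalar curvature $\mathbf{s}(\omega)+h_\epsilon+O(\epsilon^N)$ in suitable weighted H\"older norms, where the leading error $h_\epsilon$ has a projection onto Killing potentials, modulo $\mathfrak{k}$, that is controlled by $\sum_i a_i^{m-1}\mu(p_i)$; the exponent $m-1$ is dictated by the mass expansion of Burns--Simanca.

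\emph{Linearization and Fredholm theory.} The extremal equation for a $K$-invariant potential $\phi$ and a Killing potential $f\in\mathfrak{k}$ reads
\[
\mathbf{s}(\omega_\epsilon+\ddbar\phi)-\langle\nabla f,\nabla\phi\rangle-f=c,
\]
whose linearization at $\phi=0$ is, up to lower-order terms, the Lichnerowicz operator $\mathcal{L}_{\omega_\epsilon}$. In $K$-invariant weighted H\"older spaces with weight slightly above the critical value $4-2m$ set by the Green's function decay, $\mathcal{L}_{\omega_\epsilon}$ is Fredholm, with limiting cokernel identified with $\mathfrak{g}''$, the Killing potentials in $\mathfrak{g}$ but not in $\mathfrak{k}$. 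The delicate analytic step is establishing an $\epsilon$-uniform bound on a right inverse, obtained by patching the Fredholm theories on the punctured base $(M\setminus\{p_i\},\omega)$ with those on the ALE models, and isolating the limiting obstructions through $L^2$ pairings against Killing potentials.

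\emph{Killing the obstruction.} The balancing condition (i) asserts that the leading obstruction $\sum_i a_i^{m-1}\mu(p_i)$ lies in $\mathfrak{g}'^*$, so its $\mathfrak{g}''^*$-projection vanishes; the obstructed implicit function theorem therefore yields a solution depending on the positions $(p_1,\ldots,p_n)$ as auxiliary parameters. Varying $p_i$ by $v_i\in T_{p_i}M$ moves the obstruction by $\sum_i a_i^{m-1}d\mu(p_i)(v_i)$, and the genericity condition (ii) says exactly that this map surjects onto $\mathfrak{g}''^*$, while the general position condition (iii) ensures its kernel is only what must be forced by global symmetries. A second, finite-dimensional implicit function theorem in these parameters cancels the residual obstruction, giving a genuine $K$-invariant extremal metric. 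The main obstacle I anticipate is the uniform invertibility in the Fredholm step: the weight must be tuned carefully --- above $4-2m$ so as to admit the Green's function, below the next indicial root to preserve Fredholm properties, and consistently on both sides of the gluing --- and (iii) is essential to rule out spurious decaying Lichnerowicz-harmonic functions on the ALE models that would otherwise enlarge the cokernel beyond the expected $\mathfrak{g}''$.
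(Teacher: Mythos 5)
The statement is quoted verbatim from Arezzo--Pacard--Singer~\cite{APS06}; the present paper does not reprove it, so the relevant comparisons are with the original APS argument and with the paper's proof of the strictly stronger Theorem~\ref{thm:main}. Your outline captures the APS route faithfully at the structural level: glue rescaled Burns--Simanca metrics at the $K$-fixed points, set up the extremal equation in $K$-invariant weighted H\"older spaces, build an $\epsilon$-uniform right inverse for the linearized (Lichnerowicz-type) operator modulo the finite-dimensional cokernel of Killing potentials, and then run a second, finite-dimensional implicit function theorem in the positions $p_i$ to annihilate the residual cokernel component, with (i) placing the leading obstruction in $\mathfrak{g}'^*$ and (ii) making the derivative of the obstruction with respect to the $p_i$ surject onto $\mathfrak{g}''^*$.

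You have, however, misidentified the role of condition (iii). The absence of decaying Lichnerowicz-harmonic functions on the Burns--Simanca ALE model is an unconditional feature of the model (Proposition~\ref{prop:BlC}, following \cite{AP06}) and has nothing to do with (iii). What (iii) actually controls is the finite-dimensional IFT step: a nonzero $Y\in\mathfrak{g}''$ vanishing at every $p_i$ lifts holomorphically to the blowup and creates a degenerate direction in the parameter variation, which would force one to also perturb the coefficients $a_i$ and hence lose exact control of the K\"ahler class. This is exactly why the paper notes that (iii) can be dropped at the price of slightly perturbing the class. A small scaling slip as well: the correct local gluing near $p_i$ is with $\epsilon^2 a_i\,\eta$, not $\epsilon^2 a_i^2\eta$, so that the class comes out as $[\omega]-\epsilon^2\sum a_i[E_i]$ and the mass contribution is of size $\epsilon^{2m-2}a_i^{m-1}$, matching the weights in (i).

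It is also worth contrasting your route with the one the paper actually uses, which proves Theorem~\ref{thm:main} and hence this statement without hypotheses (ii) and (iii). Rather than solving the extremal equation outright and then killing the residual obstruction by tuning the positions, the paper solves the relaxed equation~(\ref{eq:glued}) of Proposition~\ref{prop:gluing}, which is always solvable and produces a metric on $Bl_pM$ together with an element $f\in\overline{\mathfrak{h}}$ whose expansion isolates the obstruction as a perturbation of $c_m\mu(p)$. The moment-map deformation Proposition~\ref{prop:deform} (a Kempf--Ness-type lemma) then yields a point $q$ in the $G^c$-orbit of $p$ where the perturbed moment map lies in the stabilizer, and $Bl_qM\cong Bl_pM$. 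Your approach needs transversality in $(p_1,\ldots,p_n)$ precisely because it works at the fixed configuration of points, which is what (ii) and (iii) supply; the paper trades that transversality for the freedom to move within a complexified orbit, where stability carries the argument.
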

In addition condition (iii) can be removed if we allow losing control of
the K\"ahler class a bit (see \cite{APS06} for more details). 

Note that since the vector field $X_\mathbf{s}$ and also any vector
field in $\mathfrak{g}'$ is contained in $\mathfrak{k}$, the assumptions
of Theorem~\ref{thm:APS} imply those of Theorem~\ref{thm:main}. In
particular we do not need conditions (ii) and (iii). Although once 
the number of points blown up is large enough the conditions 
(ii) and (iii) are satisfied generically, it is clearly of interest
to obtain results that work for fewer points. We also see that condition
(i) seems to be weakened, but in fact if we choose $K$ to be the largest
possible fixing the points $p_1,\ldots,p_n$ then condition (i) is
equivalent to the vanishing of $\sum a_i^{m-1}\mu(p_i)$ at the points
$p_i$. 

The new ingredient in the proof of Theorem~\ref{thm:main} is fairly
simple so we describe it here briefly, focusing on the case of blowing
up just one point. Starting with an extremal metric on $M$, in
\cite{APS06} the authors try to directly construct an extremal metric on
the blowup $Bl_pM$ in suitable K\"ahler classes, 
whereas we try to solve a slightly more general
equation instead. More precisely for suitably small $\epsilon$ we find
a metric $\omega_{p,\epsilon}$ on $Bl_pM$ in the K\"ahler class
$[\omega]-\epsilon^2[E]$ together with a vector field $h_{p,\epsilon}\in
\mathfrak{g}$ such that if the vector field
$h_{p,\epsilon}$ vanishes at the point $p$, then $\omega_{p,\epsilon}$
is an extremal metric. So the problem becomes to analyse when
$h_{p,\epsilon}$ vanishes at $p$, but this is a finite
dimensional problem. Varying $p$ we obtain a map
\[ h_\epsilon : M \to \mathfrak{g} \]
for small $\epsilon$, and the crucial point is that $h_\epsilon$ is
a perturbation of the moment map $\mu$. Then a
perturbation argument shows that if $\mu(p)$ vanishes at $p$ then
there is a point $q$ in the same orbit of the complexified group $G^c$
as $p$, such that $h_\epsilon(q)$ vanishes at $q$. This means that we
have an extremal metric on the blowup $Bl_qM$, but this is
biholomorphic to the blowup $Bl_pM$ and this concludes the proof. The
actual proof will be slightly different, since for technical reasons we
will work on a suitable submanifold of $M$ instead of all of $M$. The
idea of separating the problem in this way into an infinite dimensional
problem that is easier to solve than the original, together with a
finite dimensional ``obstruction'' problem is well known (see for
example Hong~\cite{Hong02, Hong08} for a similar technique used to
construct constant scalar curvature metrics on ruled manifolds). 

Recently there has been much work on relating the existence of extremal
metrics to algebro-geometric conditions on the underlying complex
manifold. This work is centered around the following conjecture.
\begin{conj}[Yau-Tian-Donaldson]\label{conj:YTD}
	Let $L$ be an ample line bundle over a
	compact complex manifold $M$. Then there exists an extremal
	metric in $c_1(L)$ if and only if the pair $(M,L)$ is relatively
	K-polystable.
\end{conj}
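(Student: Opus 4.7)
The plan is to address the two implications of Conjecture \ref{conj:YTD} separately, since they differ enormously in difficulty.

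For the \emph{only if} direction, I would follow the Calabi-functional lower-bound strategy of Donaldson, adapted to the relative setting by Stoppa-Sz\'ekelyhidi. Given an extremal metric $\omega \in c_1(L)$ and any test configuration $\mathcal{X}$ for $(M,L)$, one approximates $\mathcal{X}$ by Bergman potentials, differentiates the Mabuchi functional along the corresponding path, and recovers the Donaldson-Futaki invariant $DF(\mathcal{X})$ as the limiting slope. Projecting orthogonally to the action of $X_\mathbf{s}$ produces the relative Donaldson-Futaki invariant $DF^{\text{rel}}(\mathcal{X})$, and a convexity estimate of the form $DF^{\text{rel}}(\mathcal{X}) \geq c\,\|\mathcal{X}\|^2 - o(1)$ along the Bergman path forces it to be strictly positive unless $\mathcal{X}$ is equivariantly a product along the torus generated by $X_\mathbf{s}$.

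For the \emph{if} direction, which is the substantive content, I would attempt a continuity method. Fix a smooth background $\omega_0 \in c_1(L)$ and deform through a one-parameter family of elliptic equations whose $t=0$ endpoint is easily solvable and whose $t=1$ endpoint is the extremal equation for $c_1(L)$. Openness at interior $t$ follows from the implicit function theorem after quotienting by the stabilizer of the extremal vector field in $G^c$, so the essential effort is closedness: obtaining a priori $C^\infty$ estimates on the potentials $\phi_t$ controlled ultimately by the algebraic K-polystability hypothesis.

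The hard part will be extracting these estimates from the algebraic stability condition. The picture one would like to establish is: if $\phi_{t_i}$ fails to converge smoothly as $t_i \to t_\infty \leq 1$, then a Gromov-Hausdorff or pluripotential limit of the polarized manifolds $(M,L,\omega_{t_i})$ produces a degenerate central fiber which is itself the total space of a nontrivial test configuration $\mathcal{X}_\infty$ for $(M,L)$, with Donaldson-Futaki invariant manifestly non-positive, computed as the limiting slope of the Mabuchi functional along the degeneration. K-polystability would then force $\mathcal{X}_\infty$ to be a product in the $X_\mathbf{s}$-direction, which can be undone by an element of $G^c$ to restore compactness. Realizing this picture requires a degeneration theory for cscK potentials which at present is available only in the Fano setting, via Cheeger-Colding-Tian for Ricci-bounded spaces and the program of Chen-Donaldson-Sun; for general polarizations such a theory has to be developed from scratch. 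A more modest, incremental strategy, and the one implicitly advocated by the present paper, is to establish the conjecture in the presence of extra symmetry or under blowup-type constructions for which the finite-dimensional obstruction map $h_\epsilon : M \to \mathfrak{g}$ described after Theorem \ref{thm:main} makes the degeneration picture explicit and reduces K-polystability to a concrete GIT condition on a finite-dimensional moment map.
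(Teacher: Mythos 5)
This statement is labelled a \emph{conjecture} in the paper, and the paper does not prove it; no proof of the Yau--Tian--Donaldson conjecture in this generality exists. What the paper actually does is verify special cases arising from blowups (Theorem~\ref{thm:main}, Theorem~\ref{thm:stable}, Corollary~\ref{cor:KE}), and the conjecture is stated only as motivation and context. So there is nothing in the paper against which to check your argument step by step; the correct assessment is whether your sketch is an honest account of the state of the problem.

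On that score you have done reasonably well. Your description of the ``only if'' direction is essentially the known strategy: lower bounds on the Calabi functional (Donaldson~\cite{Don05}), extended to the relative and extremal setting by Stoppa~\cite{Sto08} and Stoppa--Sz\'ekelyhidi~\cite{SSz09}, the latter of which is cited in this very paper. One caveat you should make explicit: even this ``easy'' direction is not fully settled for the precise notion of relative K-polystability appearing in the conjecture --- the paper itself notes this dependence on definitions, and in Corollary~\ref{cor:KE} it must weaken item (ii) to stability only against test-configurations invariant under a maximal torus, precisely because full K-polystability from existence was not available at the time. For the ``if'' direction your proposed continuity method and the degeneration-theoretic picture are aligned with the Chen--Donaldson--Sun programme in the Fano case, and you are right that the key missing ingredient is a compactness/partial-regularity theory for cscK (or extremal) potentials for a general polarization; that remains the fundamental open problem. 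Your final paragraph correctly identifies the actual role of this paper: not a proof of the conjecture, but a verification in the finite-dimensional blowup setting where the obstruction reduces to a concrete GIT condition. So your proposal does not contain a mathematical error, but it should not be mistaken for a proof, and you should not present it as one --- the degeneration theory you invoke ``has to be developed from scratch'' is not a detail but the entire remaining difficulty, and no amount of the machinery you outline in the first two paragraphs substitutes for it.
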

This conjecture goes back to Yau~\cite{Yau93} in the case of
K\"ahler-Einstein metrics, and the first results are due to
Tian~\cite{Tian97}. Donaldson~\cite{Don02} extended the question to the
case of constant scalar curvature metrics and proved the conjecture in
the case of toric surfaces~\cite{Don08_1}. Two recent surveys on this topic
are Thomas~\cite{Thomas06} and Phong-Sturm~\cite{PS08}. 
Note that it is essentially known that if an
extremal metric exists then the manifold is relatively K-polystable
(see Donaldson~\cite{Don05}, Stoppa~\cite{Sto08,SSz09},
Mabuchi~\cite{Mab08}), although this depends on the precise
definition of stability being used. 

A natural problem is to verify the conjecture for the type of blowups we
are considering, building on existence results like
Theorem~\ref{thm:main} and \ref{thm:APS}. 
Let us suppose that $\omega$ is an extremal
metric on $M$, and $\omega\in c_1(L)$ for some ample line bundle $L\to
M$. For simplicity let us focus on the case of
blowing up just one point. We want to characterize the points $p\in
M$ for which the blowup $Bl_pM$ admits an extremal metric in the class
$c_1(\pi^*L-\epsilon E)$ for small rational $\epsilon$. Note that some
partial results on this question for cscK metrics on multiple blowups
were obtained by Arezzo-Pacard~\cite{AP07}. 

The choice of moment map gives a
lifting of the infinitesimal action of $G$ to the line bundle $L$. 
Replacing $L$ by a large power if necessary we can assume that we obtain
a global action of $G$, and moreover we can extend this to an action of
the complexified group $G^c$. 
In this case Theorem~\ref{thm:main} can be
reformulated as follows.

\begin{cor} Suppose $p\in M$ is such that $X_\mathbf{s}$ vanishes at
	$p$, and $p$ is relatively stable for the action of $G^c$
	on $M$ with respect to the polarization $L$. Then $Bl_pM$
	admits an extremal metric in the class $c_1(\pi^*L-\epsilon E)$
	for sufficiently small $\epsilon$. 
\end{cor}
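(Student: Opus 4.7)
The plan is to reduce the corollary to a direct application of Theorem~\ref{thm:main} for a single point, where the only nontrivial hypothesis beyond $X_\mathbf{s}(p)=0$ is that $\mu(p)$, viewed as a vector field via the inner product identification $\mathfrak{g}\cong\mathfrak{g}^*$, vanishes at the point $p$. Since blowing up at any point in the same $G^c$-orbit produces biholomorphic manifolds, it is enough to find \emph{some} $q\in G^c\cdot p$ satisfying both conditions, and then invoke Theorem~\ref{thm:main} at $q$.

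The key input is a relative Kempf--Ness theorem. Because $\omega$ is extremal, $X_\mathbf{s}$ lies in the center of $\mathfrak{g}$, so it generates a torus $T_\mathbf{s}\subset G$ whose complexification $T_\mathbf{s}^c$ is central in $G^c$. The natural interpretation of ``relatively stable'' in this setting is that $p$ is stable for the induced action of $G^c/T_\mathbf{s}^c$ with respect to the induced polarization, or equivalently that the orbit $G^c\cdot p$ meets the locus where the $\mathfrak{g}^*/\mathbb{R}\langle X_\mathbf{s}\rangle$-valued moment map vanishes. By the Kempf--Ness correspondence applied to this quotient action, relative stability of $p$ produces $q\in G^c\cdot p$ with $\mu(q)\in\mathbb{R}\langle X_\mathbf{s}\rangle$, i.e.\ $\mu(q)=c\,X_\mathbf{s}$ for some $c\in\mathbb{R}$. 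Moreover, since $X_\mathbf{s}$ is central in $\mathfrak{g}^c$, the vector field $X_\mathbf{s}$ vanishes at every point of $G^c\cdot p$ once it vanishes at $p$: if $q=g\cdot p$, then $\exp(tX_\mathbf{s})\cdot q=g\cdot\exp(tX_\mathbf{s})\cdot p=q$. Hence $X_\mathbf{s}(q)=0$, and the vector field $\mu(q)=cX_\mathbf{s}$ also vanishes at $q$.

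With these two conditions in hand at $q$, Theorem~\ref{thm:main} (with $n=1$ and $a_1=1$) produces an extremal K\"ahler metric on $Bl_qM$ in the class $\pi^*[\omega]-\epsilon^2[E]$ for all small $\epsilon$, and the biholomorphism $Bl_qM\cong Bl_pM$ induced by the element of $G^c$ carrying $p$ to $q$ transports this metric to one on $Bl_pM$ in the desired K\"ahler class $c_1(\pi^*L-\epsilon E)$, once $\epsilon$ is taken rational so that the class is integral. The main conceptual obstacle is pinning down the correct algebraic definition of ``relatively stable'' and verifying the relative Kempf--Ness statement in the precise form used above; this is essentially a standard extension of the classical theorem, but it relies on $T_\mathbf{s}^c$ being central so that the quotient $G^c/T_\mathbf{s}^c$ acts cleanly on $M$ and on the quotient polarization. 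The remaining steps---centrality, orbit biholomorphism, and matching of K\"ahler classes---are routine.
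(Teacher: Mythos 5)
Your overall strategy is the right one and matches what the paper intends: the corollary is explicitly billed as a reformulation of Theorem~\ref{thm:main}, and the proof consists of (a) using relative stability to produce a point $q$ in the $G^c$-orbit of $p$ at which the hypotheses of Theorem~\ref{thm:main} hold, (b) noting that $X_\mathbf{s}$ is central so it vanishes on the whole orbit, and (c) transporting the resulting extremal metric through the biholomorphism $Bl_qM\cong Bl_pM$. Steps (b) and (c) are handled correctly.

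The gap is in step (a), specifically in how you interpret ``relatively stable.'' The paper has already defined this notion in Section~\ref{sec:deform}: a point $x$ is relatively stable if there exists $y$ in the $H^c$-orbit with $\mu(y)\in\mathfrak{h}_y$, the stabilizer of $y$. Applied with $H=G$ and $U=M$, this is precisely ``there exists $q\in G^c\cdot p$ such that the vector field $\mu(q)$ vanishes at $q$,'' which is word-for-word the hypothesis of Theorem~\ref{thm:main} at $q$ (after the centrality observation takes care of $X_\mathbf{s}$). No auxiliary Kempf--Ness argument is needed: the corollary follows by definition. You instead reinterpret relative stability as ordinary stability for the quotient action of $G^c/T_\mathbf{s}^c$, which via Kempf--Ness produces $\mu(q)\in\mathbb{R}\langle X_\mathbf{s}\rangle$. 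That is a strictly stronger conclusion than $\mu(q)\in\mathfrak{g}_q$: the stabilizer of $q$ can be a torus of dimension larger than one (and indeed $\mu(q)$ is always in the center of $\mathfrak{g}_q$ by equivariance, but need not be proportional to $X_\mathbf{s}$), so there may be points that are relatively stable in the paper's sense but whose orbit never meets the locus $\{\mu\in\mathbb{R}\langle X_\mathbf{s}\rangle\}$. Your proof as written does not handle those cases, and you flag this yourself as ``the main conceptual obstacle.'' The fix is simply to cite the paper's Definition of relative stability directly in place of the ad hoc quotient construction.
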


In order to verify Conjecture~\ref{conj:YTD} we need to relate the
stability of $p\in M$ for the action of $G^c$, to relative
K-polystability of $Bl_pM$ with respect to the class $\pi^*L-\epsilon E$
for small $\epsilon$. In this direction, Stoppa~\cite{Sto10} and
Della Vedova~\cite{DV08} showed that if $p$ is relatively
\emph{strictly} unstable for the action of $G^c$, then $Bl_pM$ is not
relatively K-stable with the polarization $\pi^*L-\epsilon E$ for small
$\epsilon$, and so it does not admit an extremal metric in these
classes. The remaining question is what happens when $p$ is strictly
semistable. We now focus on the cscK case, for which we have the
following. 

\begin{thm}\label{thm:stable} Let $(M,L)$ be a polarized manifold and
	fix $p\in M$. Suppose that for all
	sufficiently small rational $\epsilon>0$, the blowup $Bl_pM$ is
	K-polystable with respect to the polarization $\pi^*L-\epsilon
	E$. Then $p\in M$ is stable for the action of $G^c$ with respect
	to the linearization $L_\delta$ for all sufficiently small
	rational 
	$\delta > 0$, where $L_\delta$ is the following, depending on
	the dimension:
	\begin{itemize}
		\item If $m > 2$ then $L_\delta = L+\delta K_M$, where
			$K_M$ is the canonical bundle.
		\item If $m=2$ and $K_X\cdot L \geqslant 0$ then
			$L_\delta = L + \delta K_M$.
		\item If $m=2$ and $K_X\cdot L < 0$ then 
			$L_\delta = L - \delta K_M$. 
	\end{itemize}
\end{thm}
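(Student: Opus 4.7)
The plan is to prove the contrapositive by turning a destabilizing one-parameter subgroup for $p$ into a destabilizing test configuration for $(Bl_pM,\pi^*L-\epsilon E)$. Suppose $p$ fails to be $G^c$-stable with respect to $L_{\delta_i}$ for a sequence $\delta_i\to 0^+$ of positive rationals. Hilbert--Mumford produces, for each $i$, a one-parameter subgroup $\lambda_i:\mathbb{G}_m\to G^c$ with $\lambda_i\cdot p\neq p$ and $\mu^{L_{\delta_i}}(p,\lambda_i)\leq 0$. Because $\mu^L(p,\cdot)$ and $\mu^{K_M}(p,\cdot)$ are $\mathbb{Z}$-linear on cocharacters of a maximal torus, and the possible attracting limits $p_0=\lim_{t\to 0}\lambda(t)\cdot p$ form a finite set, one can pass to a subsequence for which $\lambda_i=\lambda$ is constant. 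Taking $\delta\to 0$ in $\mu^L(p,\lambda)+\delta\mu^{K_M}(p,\lambda)\leq 0$ yields either $\mu^L(p,\lambda)<0$, or $\mu^L(p,\lambda)=0$ and $\mu^{K_M}(p,\lambda)\leq 0$ (with the sign convention reversed in the $m=2$, $K_M\cdot L<0$ variant).

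From this $\lambda$ and $p_0$ I form the standard blow-up test configuration: on $M\times\mathbb{A}^1$ with the diagonal $\mathbb{G}_m$-action (via $\lambda$ on $M$, by scaling on $\mathbb{A}^1$), let $\Gamma$ be the closure of $\{(\lambda(t)\cdot p,t):t\neq 0\}$, and set $\mathcal{X}:=Bl_\Gamma(M\times\mathbb{A}^1)\to\mathbb{A}^1$ with the relative polarization $\mathcal{L}_\epsilon:=\mathrm{pr}_M^*L-\epsilon\mathcal{E}$, where $\mathcal{E}$ is the exceptional divisor of this blowup. The $\mathbb{G}_m$-action lifts equivariantly; the generic fiber is $(Bl_pM,\pi^*L-\epsilon E)$, and the central fiber is $(Bl_{p_0}M,\pi^*L-\epsilon E_0)$. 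Since $\lambda\cdot p\neq p$, this is a nontrivial test configuration of $Bl_pM$.

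The heart of the argument is an asymptotic expansion of $DF(\mathcal{X},\mathcal{L}_\epsilon)$ in $\epsilon$. Using the intersection identities $(\pi^*L-\epsilon E)^m=L^m-\epsilon^m$ and $K_{Bl_{p_0}M}\cdot(\pi^*L-\epsilon E)^{m-1}=K_M\cdot L^{m-1}+(m-1)\epsilon^{m-1}$, together with equivariant Riemann--Roch for the $\mathbb{G}_m$-action on the central fiber (which identifies the relevant weights with $\mu^L(p,\lambda)$ and $\mu^{K_M}(p,\lambda)$), I expect an expansion of the form
\[
DF(\mathcal{X},\mathcal{L}_\epsilon)=C(\epsilon)\bigl[\mu^L(p,\lambda)+\delta(\epsilon)\mu^{K_M}(p,\lambda)\bigr]+\text{higher order},
\]
for some $C(\epsilon)>0$ and some positive $\delta(\epsilon)\to 0$ as $\epsilon\to 0^+$. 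The dimensional trichotomy in the statement reflects the balance between the $\epsilon^{m-1}$ and $\epsilon^m$ contributions to the DF ratio: for $m>2$ the $\epsilon^{m-1}$ term strictly dominates and unambiguously produces $L_\delta=L+\delta K_M$, while for $m=2$ the two contributions are comparable and the sign of $K_M\cdot L$ determines whether $\delta$ multiplies $+K_M$ or $-K_M$. Combined with the bracket being $\leq 0$ under the destabilization hypothesis, this gives $DF(\mathcal{X},\mathcal{L}_\epsilon)\leq 0$ for some small rational $\epsilon$, contradicting K-polystability of $(Bl_pM,\pi^*L-\epsilon E)$.

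The main obstacle I foresee is the asymptotic analysis in dimension $m=2$: the leading contributions from $L$ and $K_M$ to the Donaldson--Futaki ratio become of the same order, so one must carry out the full intersection-theoretic expansion and extract the sign of $K_M\cdot L$ to match the trichotomy. A secondary technical point is that K-polystability requires ruling out the equality $DF=0$ for a nontrivial $\mathcal{X}$; this follows from $\lambda\cdot p\neq p$, together with the observation that $\mathcal{X}$ would otherwise be equivalent to a product configuration on $Bl_pM$.
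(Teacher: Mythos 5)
Your proposal follows the same overall strategy as the paper: argue the contrapositive, produce a one-parameter subgroup that destabilizes $p$ under the deformed linearization, push it to the blowup via the ``blow up the closure of the orbit in $M\times\mathbf{C}$'' test configuration, and then compute the Donaldson--Futaki invariant asymptotically in $\epsilon$ to derive a contradiction with K-polystability of $(Bl_pM,\pi^*L-\epsilon E)$. This is exactly the route taken in Section~\ref{sec:stability}.

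There are, however, two substantive gaps. The first is in extracting a single destabilizing one-parameter subgroup $\lambda$ uniformly in $\delta$. You invoke ``the possible attracting limits $p_0$ form a finite set,'' which is not what justifies the subsequence extraction: the relevant finiteness is that the weight functionals $w_L(p,\cdot)$ and $w_{K_M}(p,\cdot)$, viewed on the cocharacter lattice of a maximal torus, are each a maximum of finitely many rational linear functionals (by Proposition 2.14 of Mumford--Fogarty--Kirwan). The paper packages this cleanly in Proposition~\ref{prop:HM}: for small $\epsilon>0$, polystability with respect to $L+\epsilon K$ is \emph{independent} of $\epsilon$, and non-polystability is witnessed by a single $\lambda$ with $w_L(p,\lambda)=0$ and $w_K(p,\lambda)\leqslant 0$. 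Your weight functionals are also not $\mathbf{Z}$-linear in the cocharacter but piecewise linear, so ``linearity'' alone does not yield the convergence you claim.

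The second gap is more serious: the computation that you write as ``I expect an expansion of the form\ldots'' is actually the technical core of the theorem. The paper proves Lemma~\ref{lem:weights} and Corollary~\ref{cor:Fblowup} by an explicit equivariant Riemann--Roch/ideal-sheaf computation, giving
\[
\mathrm{Fut}(\tilde v,Bl_qM,L_\epsilon) = -\frac{\epsilon^{m-1}}{2(m-2)!}h(q) -\frac{\epsilon^m}{m!}\Bigl(\frac{m-2}{2}\Delta h(q)-\frac{a_1}{a_0}h(q)\Bigr)+O(\epsilon^{m+1})
\]
for $m\geqslant 3$, together with the $m=2$ expansions. The trichotomy in the theorem statement is then read off from the sign of the coefficient of $\Delta h(q)$ in the leading $\epsilon$-term when $h(q)=0$: for $m>2$ this coefficient is $-\frac{m-2}{2\,m!}<0$, forcing the sign $L+\delta K_M$; for $m=2$ the $\frac{m-2}{2}$ factor vanishes, the next term is governed by $a_1=-\frac12 K_M\cdot L$ (Hirzebruch--Riemann--Roch), and the sign of $K_M\cdot L$ dictates whether $L+\delta K_M$ or $L-\delta K_M$ is the right linearization. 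Without carrying out that expansion your argument does not actually establish the sign pattern, and therefore does not prove the theorem. You have identified the right shape of the argument, but the expansion must be computed, not assumed.

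A smaller point: in the equality case $\mathrm{Fut}=0$ you appeal to the test configuration being ``nontrivial'' because $\lambda\cdot p\neq p$. This is the right idea, and it matches the paper's conclusion, but a careful treatment should note that the central fiber is $Bl_qM$ with $q\neq p$ a degeneration of $p$, and that this is not a product configuration for $Bl_pM$.
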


In comparison, Stoppa's result~\cite{Sto10} implies that if $Bl_pM$ is
K-polystable for the polarizations $\pi^*L-\epsilon E$ for sufficiently 
small $\epsilon$, then $p\in M$ is semistable with respect to the
linearization $L$. 
But if $p\in M$ is stable with respect to the linearization $L\pm\delta K_M$
for all sufficiently small $\delta > 0$, then by letting $\delta\to 0$
it follows easily that $p\in M$ is semistable with respect to $L$, so
our result is slightly stronger. In fact we expect
Theorem~\ref{thm:stable} to be
sharp at least when $m>2$. In order to show this, we would need the
following strengthening of Theorem~\ref{thm:main} in the cscK case,
stated for the blowup in just one point.

\begin{conj}\label{conj:stronger}
	Let $\dim M=m > 2$. 
	Suppose that $M$ admits a cscK metric in $c_1(L)$, and let
	$p\in M$. There exist $\delta_0, \epsilon_0>0$ such that if
	$\mu(p) + \delta\Delta\mu(p)=0$ for some $\delta\in(0,\delta_0)$
	then for all $\epsilon\in(0,\epsilon_0)$ the manifold $Bl_pM$
	admits a cscK metric in the K\"ahler class $c_1(\pi^*L -
	\epsilon E)$. If $m=2$ then we can ask for an analogous result
	to hold, just using $\mu(p) \pm \delta\Delta\mu(p)$ with the
	sign in accordance with the signs in Theorem~\ref{thm:stable}.
\end{conj}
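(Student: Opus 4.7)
The plan is to adapt the strategy of Theorem~\ref{thm:main} by pushing the asymptotic expansion of the obstruction map one order beyond leading, then running a $G^c$-perturbation and implicit function theorem argument on the refined equation. As in the proof of Theorem~\ref{thm:main}, for each $p\in M$ and small $\epsilon>0$ the gluing construction provides a K\"ahler metric $\omega_{p,\epsilon}$ on $Bl_pM$ in the class $c_1(\pi^*L-\epsilon E)$ together with an element $h_{p,\epsilon}\in\mathfrak{g}$; in the cscK setting $\omega_{p,\epsilon}$ is cscK precisely when $h_{p,\epsilon}$ is the zero element of $\mathfrak{g}$, a condition strictly stronger than its vanishing at the single point $p$ which sufficed for extremality. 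Varying $p$ gives a map $h_\epsilon:M\to\mathfrak{g}$ that is a small perturbation of the moment map $\mu$, and the new task is to locate actual zeros of $h_\epsilon$ in $\mathfrak{g}$, not merely points where its value vanishes.

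The key technical step is the expansion
\[ h_\epsilon(p) = \mu(p) + c_m\,\epsilon^{\alpha_m}\,\Delta\mu(p) + O(\epsilon^{\alpha_m+1}), \]
for some nonzero constant $c_m$ and positive exponent $\alpha_m$ depending on $m$. This form is natural if one interprets $\mu+\delta\Delta\mu$ as the moment map for the perturbed polarization $L+\delta K_M$, since the difference of the two moment maps is, via a Lichnerowicz-type identity, essentially the Laplacian of the holomorphy potential. To derive the expansion I would carry the scalar curvature calculation of Arezzo-Pacard-Singer one order further than needed for extremality: the Burns-type model contributes a scalar curvature deficit concentrated near the exceptional divisor, and $h_{p,\epsilon}$ is essentially the $L^2$-projection of this deficit onto the holomorphy potentials of elements of $\mathfrak{g}$. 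The first nontrivial moment of the deficit against $\langle\mu,X\rangle$ reduces, after integration by parts on the exceptional annulus and an application of the Lichnerowicz identity, to a universal constant times $\Delta\langle\mu,X\rangle(p)$, yielding the claimed correction. For $m>2$ this produces $\alpha_m>0$ with a universal sign of $c_m$; for $m=2$ the analogous computation contains a logarithm, and the sign of the leading contribution flips according to $K_M\cdot L$, matching the case split in Theorem~\ref{thm:stable}.

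Granted the expansion, suppose $p_0\in M$ satisfies $\mu(p_0)+\delta\Delta\mu(p_0)=0$ for some $\delta\in(0,\delta_0)$. For each small $\epsilon$ I would first solve the model equation $\mu(q)+c_m\epsilon^{\alpha_m}\Delta\mu(q)=0$ on the $G^c$-orbit of $p_0$ by a Kempf-Ness type argument: the orbit map $\mu|_{G^c\cdot p_0}$ is a submersion onto a neighborhood of $\mu(p_0)$ in the relevant directions, so a continuity argument along the orbit produces a point $q_\epsilon$ close to $p_0$ where the model equation holds. A standard implicit function theorem step then perturbs $q_\epsilon$ to an exact zero of $h_\epsilon$, producing a cscK metric on the blowup at that perturbed point. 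Since the point lies in the $G^c$-orbit of $p_0$, its blowup is biholomorphic to $Bl_{p_0}M$, and the cscK metric descends to $Bl_{p_0}M$ as required.

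The main obstacle is the refined asymptotic identification of the correction as $\Delta\mu(p)$: the estimates in Arezzo-Pacard-Singer control $h_\epsilon$ at leading order but not automatically one order further, so the gluing computation must be redone with greater precision and uniformly in $p$, so that an error of size $o(\epsilon^{\alpha_m})$ can be extracted. The structural guidance comes from the interpretation of the correction as the first-order change in moment map under $L\mapsto L+\delta K_M$, but pinning down the precise coefficient and sign, especially in the subtle $m=2$ case where a logarithm and the intersection number $K_M\cdot L$ both intervene, is where most of the work will lie.
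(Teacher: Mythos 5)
The statement you are proving is stated in the paper as a \emph{conjecture}, not a theorem: the paper offers only the heuristic at the end of Section~\ref{sec:gluing} (via Lemma~\ref{lem:BSmetric}, the subleading term $a|z|^{2-2m}$ in the Burns--Simanca potential produces the correction $\epsilon^{2m}ac_m'\Delta\mu(p)$ in $h$), and the author explicitly states that he has not been able to carry out the remaining deformation step. Your proposal follows exactly this route, and the step you yourself flag as ``the main obstacle \dots where most of the work will lie'' is precisely the step that is open. Concretely: Proposition~\ref{prop:gluing} only yields $f=\mathbf{s}+\epsilon^{2m-2}(\lambda+c_m\mu(p))+f_\epsilon$ with $|f_\epsilon|\leqslant c\epsilon^{\kappa}$ for some $\kappa>2m-2$, whereas to see the $\Delta\mu(p)$ correction one needs $\kappa>2m$. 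With the paper's choices ($r_\epsilon=\epsilon^{(2m-1)/(2m+1)}$, $\delta$ near $4-2m$) the error $r_\epsilon^{4-\delta}$ is of order $\epsilon^{2m-\frac{2m}{2m+1}}$, which swamps the $\epsilon^{2m}$ term you want to extract. Beating this requires building a strictly better approximate solution (deforming $\omega+\ddbar\Gamma$, with the refined $\Gamma$ containing the $\epsilon^{2m}a|z|^{2-2m}$ term, closer to an exact cscK metric away from $p$ before gluing), and nothing in your sketch supplies that; ``redo the gluing computation with greater precision'' is the assertion of the needed estimate, not its proof.

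There is also a gap in the finite-dimensional step. Your hypothesis fixes one $\delta\in(0,\delta_0)$ with $\mu(p_0)+\delta\Delta\mu(p_0)=0$, while the model equation you need to solve is $\mu(q)+c\epsilon^{2}\Delta\mu(q)=0$ with a coefficient tending to $0$. Passing from the former to the latter requires Proposition~\ref{prop:HM} (polystability with respect to $L+\delta K_M$ is independent of small $\delta>0$) together with the Kempf--Ness theorem via Lemma~\ref{lem:LK}; your ``continuity argument along the orbit'' does not address the real danger, namely that if $p_0$ is only strictly semistable for $L$ itself, the zeros $q_\epsilon$ of $\mu+c\epsilon^{2}\Delta\mu$ escape to infinity in the $G^c$-orbit as $\epsilon\to 0$, so the constants in the implicit function theorem (the lower bound on $D\mu$ at $q_\epsilon$, the size of the ball on which Proposition~\ref{prop:deform} applies) degenerate and must be balanced against the $o(\epsilon^{2m})$ error you have not yet established. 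As it stands the proposal reproduces the paper's motivating computation but does not close either gap, so it is not a proof.
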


Here $\Delta\mu$ is the Laplacian of $\mu$ taken componentwise after
identifying $\mathfrak{g}^*$ with $\mathbf{R}^l$ for some $l$. Note that
$\mu + \delta\Delta\mu$ is a moment map for the action of $G$ on $M$
with respect to the K\"ahler form $\omega-\delta\rho$, where $\rho$ is
the Ricci form of $\omega$, so by the Kempf-Ness theorem we can find a
zero of $\mu+\delta\Delta\mu$ in the $G^c$-orbit of $p$ if and only if
$p$ is stable with respect to the linearization $L+\delta K_M$ (see
Lemma~\ref{lem:LK} in Section~\ref{sec:stability} for this). So
Theorem~\ref{thm:stable} and Conjecture~\ref{conj:stronger} together
imply that
if $M$ admits a cscK metric in $c_1(L)$ and $Bl_pM$ is K-polystable for
the polarization $\pi^*L-\epsilon E$ for some sufficiently small
$\epsilon$, then $Bl_pM$ admits a cscK metric in $c_1(\pi^*L-\epsilon
E)$. At the end of Section~\ref{sec:gluing} we indicate why we expect
this conjecture to hold at least when $m>2$. 

There is one case when Conjecture~\ref{conj:stronger} follows
directly from Theorem~\ref{thm:main}, namely when $(M,\omega)$ is
K\"ahler-Einstein. The only interesting case is when $M$ is Fano, since
otherwise $M$ does not admit Hamiltonian holomorphic vector fields, so
the blowup at any point admits a cscK metric by the theorem in \cite{AP06}. 
In the Fano 
case we can scale the metric so that 
$\rho=\omega$, in which case $\Delta\mu=-\mu$. The statement of the
conjecture then reduces to that of Theorem~\ref{thm:main}, so we obtain the
following.
\begin{cor}\label{cor:KE}
	Suppose that $(M,\omega)$ is Fano and K\"ahler-Einstein, 
	and let $p\in M$. For sufficiently small rational $\epsilon>0$ the
	following are equivalent:
	\begin{itemize}
		\item[(i)] 
			The blowup $Bl_pM$ admits a cscK metric in the class
			$\pi^*[\omega]-\epsilon[E]$,
		\item[(ii)] $Bl_pM$ is K-polystable with  
			respect to the polarization $\pi^*K_M^{-1} -
			\epsilon E$, for test-configurations that are
			invariant under a maximal torus. 
		\item[(iii)]
			The point $p\in M$ is GIT polystable for the action of
			the automorphism group of $M$, with respect to
			the polarization $K_M^{-1}$. 
	\end{itemize}
\end{cor}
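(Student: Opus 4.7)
The plan is to prove the implications in a cycle (i) $\Rightarrow$ (ii) $\Rightarrow$ (iii) $\Rightarrow$ (i). The implication (i) $\Rightarrow$ (ii) is the ``easy'' direction of the Yau-Tian-Donaldson conjecture: the existence of a cscK metric implies K-polystability, and in particular K-polystability against torus-invariant test-configurations, as established in the works of Stoppa and Mabuchi cited in the introduction.

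For (ii) $\Rightarrow$ (iii), I would apply Theorem~\ref{thm:stable} with $L = K_M^{-1}$. Since $M$ is Fano, $L_\delta = K_M^{-1} + \delta K_M = (1-\delta) K_M^{-1}$ is a positive scaling of $K_M^{-1}$ for small $\delta > 0$, so GIT stability with respect to $L_\delta$ coincides with stability with respect to $K_M^{-1}$. Theorem~\ref{thm:stable} then yields stability, hence polystability, of $p$ under $G^c$ with respect to $K_M^{-1}$. Since $(M,\omega)$ is KE Fano the identity component $\mathrm{Aut}(M)^0$ coincides with $G^c$ by Matsushima, and $\mathrm{Aut}(M)$ differs from $G^c$ only by a finite reductive extension, so this is equivalent to polystability under the full automorphism group, which is (iii).

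For (iii) $\Rightarrow$ (i), polystability of $p$ under $\mathrm{Aut}(M)$ with respect to $K_M^{-1}$ is equivalent, by Kempf-Ness (Lemma~\ref{lem:LK}), to the existence of a point $q$ in the $G^c$-orbit of $p$ with $\mu(q) = 0$. Since $(M,\omega)$ is KE we have $X_\mathbf{s} = 0$, and the vanishing of $\mu(q) \in \mathfrak{g}^*$ means that the corresponding vector field in $\mathfrak{g}$ is identically zero. Thus both hypotheses of Theorem~\ref{thm:main} are met at $q$ (with $n=1$, $a_1 = 1$), yielding an extremal metric $\omega_\epsilon$ on $Bl_qM$ in the class $\pi^*[\omega] - \epsilon^2 [E_q]$. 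Pulling back by any $g \in G^c$ with $g(p) = q$ --- which lifts to a biholomorphism $Bl_pM \to Bl_qM$ sending $E_p$ to $E_q$ and preserving $[\omega]$ --- transfers this to an extremal metric on $Bl_pM$ in the class $\pi^*[\omega] - \epsilon^2 [E_p]$.

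The main obstacle is upgrading this extremal metric to a cscK one. For this I would invoke the standard expansion of the Futaki invariant under blowup (in the spirit of Arezzo-Pacard and LeBrun-Simanca): for any holomorphic vector field $Y$ on $Bl_qM$, lifted from a vector field on $M$ vanishing at $q$, the Futaki invariant of $(Bl_qM, \pi^*[\omega] - \epsilon^2 [E_q])$ evaluated on $Y$ equals $F_{M,\omega}(Y)$ plus a correction whose leading term is proportional to the value $\mu_Y(q)$ of the holomorphy potential of $Y$ at $q$. Both quantities vanish --- the former because $(M,\omega)$ is KE (constant scalar curvature makes the Futaki invariant identically zero), the latter because $\mu(q) = 0$ --- so the Futaki invariant of $Bl_qM$ vanishes on the entire Lie algebra of holomorphic vector fields, forcing the extremal vector field to be zero and $\omega_\epsilon$ to be cscK.
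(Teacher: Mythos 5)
Your overall architecture matches the paper's proof exactly: (i)$\Rightarrow$(ii) via the relative K-stability result of \cite{SSz09}, (ii)$\Rightarrow$(iii) via Theorem~\ref{thm:stable}, and (iii)$\Rightarrow$(i) via Theorem~\ref{thm:main} plus a Futaki invariant computation. However, there are two gaps, one of which is substantive.

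The substantive gap is in your final step of (iii)$\Rightarrow$(i). You conclude that the Futaki invariant of $(Bl_qM,\pi^*[\omega]-\epsilon[E])$ vanishes because its \emph{leading} correction term is proportional to $\mu_Y(q)=h(q)=0$. That inference is invalid: by Corollary~\ref{cor:Fblowup} the next term in the expansion is proportional to $\epsilon^m\Delta h(q)$, so killing the leading term only shows the Futaki invariant is $O(\epsilon^m)$, which does not force it to vanish and hence does not force the extremal metric to be cscK. The missing ingredient is precisely the Fano K\"ahler--Einstein normalization $\rho=\omega$, which by Lemma~\ref{lem:LK} gives $\Delta h=-h$, so $\mu(q)=0$ implies $\Delta h(q)=-h(q)=0$ as well; and then one needs the exact statement of Corollary~\ref{cor:Fblowup} (coming from Lemma~\ref{lem:weights}, where \emph{all} correction terms depend only on $h(q)$ and $\Delta h(q)$) that $h(q)=\Delta h(q)=0$ forces the blowup Futaki invariant to vanish identically for all $\epsilon$, not merely to leading order. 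This is exactly how the paper closes the argument.

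The secondary gap is in (ii)$\Rightarrow$(iii): the hypothesis of Theorem~\ref{thm:stable} is K-polystability with respect to \emph{all} test-configurations, whereas (ii) only supplies it for test-configurations invariant under a maximal torus, so you cannot quote the theorem directly. The paper bridges this by observing that in the finite-dimensional Hilbert--Mumford test it suffices to use one-parameter subgroups commuting with a torus fixing $p$ (via Kempf's optimal destabilizing subgroups, or equivariance of the moment map), and the test-configurations produced in the proof of Theorem~\ref{thm:stable} from such subgroups are torus-invariant. Your observation that $L_\delta$ is a positive rescaling of $K_M^{-1}$ in the Fano case, and the Matsushima remark identifying $G^c$ with the reductive automorphism group, are both correct and are implicit in the paper.
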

In the second statement we need to use test-configurations invariant
under a 
maximal torus for the implication (i)$\Rightarrow$(ii)
because in~\cite{SSz09} we were not yet able to prove full
K-polystability assuming the existence of a cscK metric.

The outline of the paper is as follows. We first give the proof of the
finite dimensional perturbation result in
Section~\ref{sec:deform} together with a related result in geometric
invariant theory. In Section~\ref{sec:prelim} we discuss some
background material on extremal metrics. 
Then in Section~\ref{sec:mainargument} we
set up the equation that we want to solve and give the proof of
Theorem~\ref{thm:main} assuming that we can solve this equation. 
In Section~\ref{sec:gluing} we solve the equation using a gluing
method similar to \cite{APS06}, completing the proof of
Theorem~\ref{thm:main}. Our 
approach is slightly different than that of Arezzo-Pacard-Singer, 
but the technical ingredients 
are more or less the same. We could also
have adapted the proof in \cite{APS06} more directly to our slightly
more general setting. Finally in Section~\ref{sec:stability} we discuss
the algebro-geometric side of the problem, and we prove
Theorem~\ref{thm:stable} and Corollary~\ref{cor:KE}.

\subsection*{Acknowledgements} I would like to thank D. H. Phong for his
encouragement and interest in this work, and also V. Tosatti for helpful
comments and discussions. This work was partially supported by NSF grant
DMS-0904223.

\section{Deforming relatively stable points in GIT}\label{sec:deform}
Let $U$ be a K\"ahler manifold (open, or compact without boundary)
with K\"ahler form $\omega$, and suppose
that a compact group $H$ acts on $U$, preserving $\omega$. Let $H^c$ be
the complexification of $H$, and $\mathfrak{h}$ be the Lie algebra of
$H$. The action of $H$ extends to a partial action of $H^c$, ie. if $x\in
U$ and $\xi\in\mathfrak{h}$ is sufficiently small then $e^{i\xi}x\in U$.
For any $x\in U$ write $\mathfrak{h}_x$ for the stabilizer of $x$. Also,
let 
\[ \mu: U\to \mathfrak{h} \]
be a moment map for the $H$ action on $U$, where we have identified
$\mathfrak{h}$ with its dual using an invariant inner product. 

\begin{prop}\label{prop:deform}
	Suppose that $x\in U$ satisfies $\mu(x)\in \mathfrak{h}_x$. Let
	$\mu_\epsilon: U\to\mathfrak{h}$ be a family of maps
	such that $\mu_\epsilon\to\mu$ in $C^0$ as $\epsilon\to 0$, and 
	for each $y\in U$ and $\epsilon >0$ 
	the element $\mu_\epsilon(y)$ commutes with the
	stabilizer $\mathfrak{h}_y$. Then for
	sufficiently small $\epsilon$ there exists $\xi\in\mathfrak{h}$
	such that $y=e^{i\xi}x$ satisfies
	$\mu_\epsilon(y)\in\mathfrak{h}_y$.
\end{prop}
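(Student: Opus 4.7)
My plan is a finite-dimensional implicit function argument on a carefully chosen slice through $x$ in the $H^c$-orbit. The slice is designed so that $\mathfrak{h}_y$ stays equal to $\mathfrak{h}_x$ along it, and then the commutativity hypothesis converts the ``weaker'' equation that the implicit function step solves into the desired condition. Concretely, let $Z := Z_\mathfrak{h}(\mathfrak{h}_x)$ denote the centralizer of $\mathfrak{h}_x$ in $\mathfrak{h}$ and set $W := Z \cap \mathfrak{h}_x^\perp$. For $\xi \in W$ we have $[\xi, \mathfrak{h}_x] = 0$, so $\mathrm{Ad}(e^{-i\xi})$ fixes $\mathfrak{h}_x^c$ pointwise; this gives $\mathfrak{h}_x \subseteq \mathfrak{h}_{e^{i\xi}x}$, and combined with upper semicontinuity of the stabilizer dimension it forces $\mathfrak{h}_{e^{i\xi}x} = \mathfrak{h}_x$ for all small $\xi \in W$.

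Writing $X_\eta$ for the vector field on $U$ generated by $\eta \in \mathfrak{h}$ and $\pi_W : \mathfrak{h} \to W$ for the orthogonal projection, I set
\[ F_\epsilon : B_\delta(0) \subset W \to W, \qquad F_\epsilon(\xi) := \pi_W\bigl(\mu_\epsilon(e^{i\xi}x)\bigr). \]
Since $\mu(x) \in \mathfrak{h}_x$ is orthogonal to $W$, one has $F_0(0) = 0$. Using the moment-map identity $\langle d\mu_x(V), \zeta\rangle = \omega_x(X_\zeta(x), V)$ together with the K\"ahler identity $\omega(\cdot, J\cdot) = g(\cdot,\cdot)$, the derivative at the origin is
\[ \langle dF_0(0)\eta, \zeta\rangle = g_x\bigl(X_\eta(x), X_\zeta(x)\bigr), \qquad \eta, \zeta \in W, \]
which is symmetric and positive definite since $W \cap \mathfrak{h}_x = 0$ makes $\eta \mapsto X_\eta(x)$ injective on $W$. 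So $F_0$ is a local diffeomorphism near $0$, and for $\delta$ small $F_0|_{\partial B_\delta}$ stays bounded away from $0$. Because $\mu_\epsilon \to \mu$ in $C^0$, we have $F_\epsilon \to F_0$ uniformly on $\overline{B_\delta}$, so for $\epsilon$ small the straight-line homotopy from $F_\epsilon$ to $F_0$ avoids $0$ on $\partial B_\delta$; a Brouwer degree argument then yields $\xi_\epsilon \in B_\delta$ with $F_\epsilon(\xi_\epsilon) = 0$. This substitutes for the classical smooth implicit function theorem, which does not directly apply since $\mu_\epsilon$ is only assumed $C^0$-close to $\mu$.

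Setting $y := e^{i\xi_\epsilon}x$, we have $\mathfrak{h}_y = \mathfrak{h}_x$ by the choice of slice, so the hypothesis that $\mu_\epsilon(y)$ commutes with $\mathfrak{h}_y = \mathfrak{h}_x$ forces $\mu_\epsilon(y) \in Z$. The orthogonal decomposition $Z = \mathfrak{z}(\mathfrak{h}_x) \oplus W$, together with $\pi_W \mu_\epsilon(y) = F_\epsilon(\xi_\epsilon) = 0$, then yields $\mu_\epsilon(y) \in \mathfrak{z}(\mathfrak{h}_x) \subseteq \mathfrak{h}_x = \mathfrak{h}_y$, as required. The main conceptual obstacle is locating this correct slice: restricting $\xi$ to $W$ rather than all of $\mathfrak{h}_x^\perp$ is what keeps the stabilizer constant under deformation, and this is precisely what allows the commutativity hypothesis to bridge the gap between the equation $\mu_\epsilon(y) \in \mathfrak{h}_x$ that the finite-dimensional reduction directly solves and the stronger conclusion $\mu_\epsilon(y) \in \mathfrak{h}_y$ demanded by the proposition.
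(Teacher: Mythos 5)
Your proof is correct, and it takes a genuinely different route from the paper's. The paper first treats the case $\mu(x)=0$, letting $\xi$ range over all of $\mathfrak{h}_x^\perp$ and solving for the vanishing of the composed projection $P_\xi=\Pi_x\Pi_{e^{i\xi}x}$ applied to $\mu_\epsilon(e^{i\xi}x)$, where $\Pi_y$ is the orthogonal projection onto $\mathfrak{h}_y^\perp$; the general case $\mu(x)\neq 0$ is then reduced to this one by passing to the subgroup $H_{T^\perp}$ attached to the torus $T$ generated by $\mu(x)$, and the commutativity hypothesis on $\mu_\epsilon$ is invoked only in that final reduction. You instead restrict the perturbation from the outset to the slice $W=Z_{\mathfrak{h}}(\mathfrak{h}_x)\cap\mathfrak{h}_x^\perp$, along which the stabilizer is literally constant, and you use the commutativity hypothesis systematically at the end to place $\mu_\epsilon(y)$ in $Z_{\mathfrak{h}}(\mathfrak{h}_x)$, so that the desired condition reduces to the square system $\pi_W\mu_\epsilon(e^{i\xi}x)=0$. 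This buys you a uniform treatment of both cases and spares you from comparing the moving stabilizer $\mathfrak{h}_{e^{i\xi}x}$ with $\mathfrak{h}_x$ via projections, a point that is somewhat delicate in the paper's version. The cost is a little more Lie-algebraic bookkeeping: the orthogonal decomposition $Z_{\mathfrak{h}}(\mathfrak{h}_x)=\mathfrak{z}(\mathfrak{h}_x)\oplus W$ genuinely uses the $\mathrm{ad}(\mathfrak{h}_x)$-invariance of the inner product (this is what makes the $\mathfrak{h}_x$- and $\mathfrak{h}_x^\perp$-components of a centralizing element centralize separately), so that hypothesis deserves to be flagged where you use it. Your replacement of the implicit function theorem by a Brouwer degree argument, forced by the mere $C^0$-closeness of $\mu_\epsilon$ to $\mu$, is the same device as in the paper.
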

\begin{proof}
	This is essentially an application of the implicit function
	theorem.
	Let us first treat the case when $\mu(x)=0$. For every $y\in U$
	write $\mathfrak{h}_y^\perp$ for the orthogonal complement of
	$\mathfrak{h}_y$, and let $\Pi_y$ be the orthogonal projection
	onto $\mathfrak{h}_y^\perp$. For $\xi$ in a small ball
	$B\subset\mathfrak{h}_x^\perp$ we define the projection
	$P_\xi:\mathfrak{h}\to\mathfrak{h}_x^\perp$ by
	\[ P_\xi(\eta) = \Pi_x\Pi_{e^{i\xi}x}(\eta).\]
	Then $P_\xi(\mu(e^{i\xi}x))=0$ means that $\mu(e^{i\xi}x)\in
	\mathfrak{h}_{e^{i\xi}x}$.
	For small $\epsilon$ define the map $F_\epsilon$ by
	\[ \begin{aligned}
		F_\epsilon : B\subset\mathfrak{h}_x^\perp &\to
		\mathfrak{h}_x^\perp \\
		\xi &\mapsto P_\xi(\mu_\epsilon(e^{i\xi}x)).
	\end{aligned}\]
	Then $F_0(0)=0$ and $DF_0$ at $0$ is just
	the derivative of $\mu$ at $x$ (we use here that $\mu(x)=0$).
	Since this is an isomorphisms
	$\mathfrak{h}_x^\perp\to\mathfrak{h}_x^\perp$, we know that 
	$F_0(B)$
	contains a small ball $B_\delta$ around the origin. It then
	follows from degree considerations that
	for sufficiently small $\epsilon$, the image
	$F_\epsilon(B)$ contains a small ball $B_{\delta'}$ around the
	origin, in particular $F_\epsilon(\xi)=0$ for some $\xi$. But
	this means that $y=e^{i\xi}x$ satisfies
	$\mu(y)\in\mathfrak{h}_y$.
	
	If $\mu(x)=\xi\not=0$, then we reduce to the previous case as
	follows. Let $T\subset H$ be the closure of the subgroup of $H$
	generated by $\xi$, so $T$ is a torus. Write $H_T$ for the
	centraliser of $T$ in $H$, and $\mathfrak{h}_T$ for its Lie
	algebra. Inside this Lie algebra let $\mathfrak{h}_{T^\perp}$ be
	the orthogonal complement of the Lie algebra of $T$, and let
	$H_{T^\perp}$ be the corresponding subgroup of $H$. We then look
	at the action of $H_{T^\perp}$ on $U$, for which the moment map
	$\mu_T$ is
	simply the projection of $\mu$ onto $\mathfrak{h}_{T^\perp}$. But
	then $\mu_T(x)=0$, so we can apply the 
	previous argument to find
	$\xi\in\mathfrak{h}_{T^\perp}$ such that $y=e^{i\xi}x$ satisfies
	$\mu_{\epsilon,T}(y)\in\mathfrak{h}_y$. Here
	$\mu_{\epsilon,T}$ is the projection of $\mu_\epsilon$ onto
	$\mathfrak{h}_{T^\perp}$. Since $\mu_\epsilon(y)$ commutes with
	elements in the stabiliser $\mathfrak{h}_y$, in particular it
	commutes with $\mathfrak{t}$. Hence $\mu_\epsilon(y)$ differs from the
	projection $\mu_{\epsilon,T}(y)$ by an element in $\mathfrak{t}$, 
	so $\mu_\epsilon(y)\in\mathfrak{h}_y$.
\end{proof}

It is helpful to put this
result into the context of relative stability. In this setting $U$ is a
compact K\"ahler manifold and the symplectic form $\omega$ represents
the first Chern class of a $\mathbf{Q}$-line bundle $L$ over $U$.
Moreover the choice of moment map $\mu$ corresponds to a choice of
lifting of the action to some power of
$L$, called a choice of linearization. 

\begin{defn}
	A point $x\in U$ is \emph{relatively stable} if there exists a
	point $y$ in the $H^c$-orbit of $x$ for which
	$\mu(y)\in\mathfrak{h}_y$. 
\end{defn}

The relationship of this definition using moment maps to geometric
invariant theory is a version of the Kempf-Ness theorem~\cite{KN79} and is
worked out in~\cite{GSz04} (see also
Kirwan~\cite{Kir84}). Using this terminology,
Proposition~\ref{prop:deform} says that
if $x$ is relatively stable for a certain choice of line bundle and
linearization then it is still relatively stable for small
perturbations. It is more general however, because we do not need to
know that the $\mu_\epsilon$ are also moment maps. 

In the rest of this 
section we study what more we can say about the stability of points
in the sense of GIT as we deform the polarization. We will only consider
a very simple kind of deformation, namely we have two line bundles $L$
and $K$ on $M$, such that $L$ is ample. We suppose that a complex reductive
group $G$ acts on $M$ and we choose linearizations of the action on $L$ and
$K$. Fix a point $p\in M$, and let $\lambda$ be a one-parameter subgroup in
$G$. Let
\[ q = \lim_{t\to 0}\lambda(t)p.\]
Then $\lambda$ fixes the point $q$ and we write $w_L(p,\lambda)$ for the
weight of the action of $\lambda$ on the fiber $L_q$, and $w_K(p,\lambda)$
for the weight on $K_q$. The following is well-known, see
Mumford-Fogarty-Kirwan~\cite{MFK94}.

\begin{prop}[Hilbert-Mumford criterion] The point $p\in M$ is semistable
	with respect to the polarization $L$ if and only if
	$w_L(p,\lambda) \geqslant 0$ for all one-parameter subgroups
	$\lambda$. If $w_L(p,\lambda) > 0$ for all $\lambda$ which does
	not fix $p$, then $p$ is polystable.
\end{prop}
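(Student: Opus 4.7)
The Hilbert--Mumford criterion is classical, so the plan is to sketch the standard GIT argument, acknowledging that the hard direction relies on a substantial structural input.

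I would first recall the definitions in terms of invariant sections. Embedding $M$ via $L^k$ for $k$ large into projective space equivariantly, semistability of $p$ means there is a $G$-invariant section $s\in H^0(M,L^{kN})^G$ with $s(p)\neq 0$, and polystability means that the affine $G$-orbit of a lift $\hat p\in L^{-1}$ of $p$ in the total space (or equivalently in the affine cone) is closed. The weight $w_L(p,\lambda)$ is characterized by the fact that on the fiber $L_q$, where $q=\lim_{t\to 0}\lambda(t)p$, the torus $\lambda$ acts by the character $t\mapsto t^{w_L(p,\lambda)}$.

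For the forward implication of semistability, I would argue as follows. Let $s$ be a $G$-invariant section of $L^{kN}$ with $s(p)\neq 0$. Since $s$ is invariant, $s(\lambda(t)p)$ equals $\lambda(t)\cdot s(p)$ in the fiber at $\lambda(t)p$. Picking a local frame of $L^{kN}$ near $q$ in which $\lambda(t)$ acts with weight $kN\, w_L(p,\lambda)$ and passing to the limit $t\to 0$, one sees that $s(q)$ differs from $\lambda(t)\cdot s(p)$ by a factor $t^{-kN w_L(p,\lambda)}$, so boundedness of $s(q)$ with $s(p)\neq 0$ forces $w_L(p,\lambda)\geqslant 0$. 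For the converse, which is the main obstacle, one uses the Hilbert--Mumford lemma: if $p$ is unstable, then $0$ lies in the closure of the affine orbit $G\cdot\hat p$, and by the classical result of Mumford (reducing to the torus case via Iwasawa/Cartan-type decompositions and applying the convex-geometric fact that $0$ lies in a closed orbit closure iff it lies in the closure of a one-parameter subgroup's orbit) one can find a one-parameter subgroup $\lambda$ with $\lambda(t)\cdot\hat p\to 0$, equivalently with $w_L(p,\lambda)<0$. This is the nontrivial input and I would simply invoke it from Mumford--Fogarty--Kirwan rather than reprove it.

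For the polystability statement, assume $w_L(p,\lambda)>0$ for every one-parameter subgroup $\lambda$ not fixing $p$. Suppose for contradiction that the orbit $G\cdot \hat p$ is not closed in the affine cone above the semistable locus. Then its closure contains another orbit, and by the Hilbert--Mumford lemma applied to the pair $(\hat p, G\cdot\hat p\setminus G\cdot\hat p)$, there is a one-parameter subgroup $\lambda$ such that $\lim_{t\to 0}\lambda(t)\hat p$ exists and lies outside $G\cdot\hat p$. Then $\lambda$ does not fix $p$ (otherwise $\lambda(t)\hat p$ would already lie in $G\cdot\hat p$ for all $t$) and $w_L(p,\lambda)\leqslant 0$, contradicting the hypothesis. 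Hence $G\cdot\hat p$ is closed, so $p$ is polystable. The key technical obstacle throughout is the Hilbert--Mumford lemma on existence of destabilizing one-parameter subgroups, which is precisely where the theory of GIT does its nontrivial work; everything else in the proof is essentially formal manipulation of weights and invariant sections.
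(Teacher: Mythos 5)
The paper does not prove this proposition at all --- it is stated as classical and attributed to Mumford--Fogarty--Kirwan --- and your sketch is the standard argument, correctly isolating the genuinely hard steps (the existence of a destabilizing one-parameter subgroup for unstable points, and of a one-parameter subgroup reaching a boundary orbit in a non-closed orbit closure) and citing them from the same source the paper relies on. This is consistent with the paper's treatment; the only caveat is that your polystability argument, like the proposition as stated, implicitly assumes that one-parameter subgroups fixing $p$ act with weight zero on the fiber $L_p$ (otherwise such a $\lambda$ could destabilize the lift $\hat p$ without contradicting the hypothesis), but that is a feature of the statement itself rather than a gap introduced by your proof.
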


The result we will need is the following version of this for the deformed
polarization $L+\epsilon K$. 
\begin{prop}\label{prop:HM}
	There exists $\epsilon_0>0$ such that for all
	$\epsilon\in(0,\epsilon_0)$ the point $p$ is polystable with respect
	to the polarization $L+\epsilon K$ if and only if $p$ is
	semistable with respect to $L$ and for every one-parameter
	subgroup $\lambda$ for which $w_L(p,\lambda)=0$ and $\lambda$
	does not fix $p$, 
	we have $w_K(p,\lambda) > 0$. In
	particular whether $p$ is polystable with respect to
	$L+\epsilon K$ or not is independent of the choice of
	$\epsilon\in(0,\epsilon_0)$.
\end{prop}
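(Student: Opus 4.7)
The plan uses the fact that Hilbert--Mumford weights add linearly in the polarisation, so $w_{L+\epsilon K}(p,\lambda)=w_L(p,\lambda)+\epsilon w_K(p,\lambda)$, and the claim reduces to understanding when this combination is positive on every one-parameter subgroup not fixing $p$. The ``only if'' direction is immediate and I would dispose of it first: for non-fixing $\lambda$, passing $\epsilon\to 0^+$ in the strict inequality gives $w_L(p,\lambda)\geqslant 0$; combined with the automatic vanishing on fixing subgroups (obtained by applying semistability of $L+\epsilon K$ to both $\lambda$ and $\lambda^{-1}$), this is semistability with respect to $L$. Moreover at any non-fixing $\lambda$ with $w_L(p,\lambda)=0$, the strict positivity $\epsilon w_K(p,\lambda)>0$ forces $w_K(p,\lambda)>0$.

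The substance is the ``if'' direction, where the difficulty is to find a single $\epsilon_0>0$ that works uniformly over the infinitely many one-parameter subgroups. My plan is to exploit the piecewise-linear structure of $w_L(p,\cdot)$ and $w_K(p,\cdot)$. After fixing a maximal torus $T\subset G$, every one-parameter subgroup is conjugate to one in $T$, and the identity $w_L(p,g\lambda g^{-1})=w_L(g^{-1}p,\lambda)$ lets me reduce to $\lambda\in\mathfrak{t}_{\mathbf{R}}$. Embedding $M$ via a large power $L^N$ into a $G$-representation $\mathbf{P}(V)$, one has $Nw_L(p,\lambda)=\min\langle w,\lambda\rangle$ taken over those $T$-weights $w$ whose component in a lift of $p$ is non-zero; this exhibits $w_L(p,\cdot)$ as a continuous piecewise-linear function on $\mathfrak{t}_{\mathbf{R}}$, and the same applies to $w_K(p,\cdot)$ via an embedding by $L^N\otimes K$ (ample for large $N$) and subtracting $Nw_L$. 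A common refinement of the two decompositions yields finitely many closed polyhedral cones $C_1,\ldots,C_r$ on each of which both weights are restrictions of fixed linear functionals $\ell^L_j,\ell^K_j$.

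Cone by cone the problem becomes a finite-dimensional linear-algebra question: given $\ell^L_j\geqslant 0$ on $C_j$ and $\ell^K_j>0$ on $(C_j\cap\{\ell^L_j=0\})\setminus\mathfrak{t}_p$, find $\epsilon_0^{(j)}>0$ such that $\ell^L_j+\epsilon\ell^K_j>0$ on all of $C_j\setminus\mathfrak{t}_p$ for $\epsilon\in(0,\epsilon_0^{(j)})$. By homogeneity I would work on the compact slice $\{\|\lambda\|=1\}\cap C_j$: away from the face $F_j=\{\ell^L_j=0\}\cap C_j$ the functional $\ell^L_j$ is bounded below by a positive constant while $|\ell^K_j|$ is bounded, and near $F_j$ the hypothesis together with continuity keeps $\ell^K_j>0$ outside any neighbourhood of $\mathfrak{t}_p$. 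The final constant is then $\epsilon_0=\min_j\epsilon_0^{(j)}$. I expect the main obstacle to be precisely the local analysis near $\mathfrak{t}_p$: the set of non-fixing one-parameter subgroups is non-compact in the normalised picture, and only the reduction to finitely many piecewise-linear pieces (coming from the finite set of $T$-weights appearing in the ambient representation) converts an apparent continuity issue into a finite combinatorial check.
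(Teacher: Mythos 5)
Your overall strategy --- reduce to a maximal torus, exploit the piecewise-linear structure of the Hilbert--Mumford weight functions to get finitely many rational polyhedral cones, and compare $w_L$ and $w_K$ on a compact slice of each cone --- is the same one the paper uses (the paper invokes Proposition~2.14 of Mumford--Fogarty--Kirwan instead of writing out the explicit embedding into $\mathbf{P}(V)$, and handles non-ample $K$ by writing $L+\epsilon K$ in terms of $L$ and $cL+K$, but these are cosmetic differences). However, the step you yourself flag as ``the main obstacle,'' namely the local analysis near $\mathfrak{t}_p$, is a genuine gap that is not resolved by the finite piecewise-linear structure alone. The issue is concrete: on the face $F_j = C_j\cap\{\ell_j^L=0\}$, your hypothesis gives $\ell_j^K>0$ on $F_j\setminus\mathfrak{t}_p$, but the infimum of $\ell_j^K$ on the unit slice of $F_j\setminus\mathfrak{t}_p$ is zero as one approaches $\mathfrak{t}_p$, so there is no neighbourhood of $F_j$ where a single $\epsilon_0^{(j)}$ obviously works. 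The paper's remedy is the remark ``by taking a smaller torus we can also assume that no element in $T$ fixes $p$'', which implicitly uses the following fact you would need to prove: if $\lambda = \lambda_1 + \lambda_2$ with $\lambda_1\in\mathfrak{t}_p$ and $\lambda_2$ in a rational complement, then $w_L(p,\lambda)=w_L(p,\lambda_1)+w_L(p,\lambda_2)$ and $w_L(p,\lambda_1)=0$ (because $\lambda_1$ acts with a single weight on the fibre $V_{\hat p}$ and semistability forces that weight to vanish), and likewise for $w_K$. This lets you project $\mathfrak{t}_p$ away and then the compactness argument on the slice is clean. Without this additivity observation the ``finite combinatorial check'' you appeal to is not automatic.

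There is a second, smaller problem in your ``only if'' direction. The proposition is an equivalence at each fixed $\epsilon\in(0,\epsilon_0)$ (that is what makes the ``in particular'' sentence a corollary), so ``passing $\epsilon\to 0^+$'' is not available to you: you are handed polystability for a single small $\epsilon$, not for a range of them. The correct argument is quantitative and again uses the piecewise-linear structure: on the unit slice, if $w_L(p,\lambda)<0$ then in fact $w_L(p,\lambda)\leqslant -\delta_0$ for a uniform $\delta_0>0$ (finitely many linear pieces), while $|w_K(p,\lambda)|\leqslant C$, so $w_{L+\epsilon K}(p,\lambda)<0$ once $\epsilon<\delta_0/C$; this both kills semistability failures and yields the uniform $\epsilon_0$. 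Finally, your conjugation step $w_L(p,g\lambda g^{-1})=w_L(g^{-1}p,\lambda)$ introduces the whole orbit of support sets of $g^{-1}p$; the paper explicitly notes that only finitely many index sets $I(\gamma p)$, $J(\gamma p)$ occur and takes the minimum of the resulting $\epsilon$'s --- you should make this finiteness explicit as well, since the cone decomposition $\{C_j\}$ a priori depends on which translate of $p$ you are looking at.
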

\begin{proof}
	First let us assume that $K$ is ample.  
	Choose a maximal torus $T\subset G$ and let $\mathfrak{t}$ be
	its Lie algebra. 
	We will use Proposition 2.14 in \cite{MFK94}. This says that
	there are a finite number of rational linear functionals $l_i,
	m_j\in \mathfrak{t}^*$, such that for every point $x\in M$ and
	one-parameter subgroup $\lambda\subset T$ we have
	\[ \begin{aligned}
		w_L(x,\lambda) &= \max\{ l_i(\lambda)\,|\, i\in I(x)\},
		\\
		w_K(x, \lambda) &= \max\{ m_j(\lambda)\, |\, j\in J(x)\}.
	\end{aligned}\]
	Here we identified $\lambda$ with its generator in
	$\mathfrak{t}$ and $I(x)$, $J(x)$ are finite index sets
	depending on $x$. 

	If we have a one-parameter subgroup $\lambda\subset G$ which is
	not in $T$, then we can always find a conjugate
	$\gamma\lambda\gamma^{-1}\subset T$, and we have
	\[ \begin{aligned}
		w_L(x,\lambda) &= w_L(\gamma x,
		\gamma\lambda\gamma^{-1})= \max\{
		l_i(\gamma\lambda\gamma^{-1})\,|\, i\in I(\gamma x)\},
		\\
		w_K(x, \lambda) &= w_K(\gamma x,
		\gamma\lambda\gamma^{-1}) = \max\{
		m_j(\gamma\lambda\gamma^{-1})\, |\, j\in J(\gamma x)\}.
	\end{aligned}\]
	We want to show that for sufficiently small $\epsilon$, the
	weight $w_L(p,\lambda) + \epsilon w_K(p,\lambda)$ is positive
	for all $\lambda$ which does not fix $p$. It is enough to check
	this for $\lambda\subset T$, because allowing conjugate
	one-parameter subgroups $\gamma\lambda\gamma^{-1}$ amounts to
	replacing $p$ by $\gamma p$, but in the weight computation 
	all that matters is the index
	sets $I(\gamma p)$ and $J(\gamma p)$. Since there are only
	finitely many of these, if we find an $\epsilon$ that works for
	each case separately, then we can take the minimum of these. 

	Restricting attention to one-parameter subgroups $\lambda\subset
	T$, we can extend the definition of $w_L(x,\lambda)$
	continuously to $\lambda\in\mathfrak{t}_\mathbf{R}$. By taking a
	smaller torus we can also assume that no element in $T$ fixes
	$p$. The main
	point is that then the set of $\lambda$ for which
	$w_L(p,\lambda)=0$ is a convex cone 
	$\mathcal{C}\subset\mathfrak{t}_\mathbf{R}$, 
	whose extremal rays are rational. By our assumption
	$w_K(p,\lambda) > 0$ for $\lambda\in \mathcal{C}\cap
	\mathfrak{t}_\mathbf{Q}$, but then this is true for
	all $\lambda\in \mathcal{C}$ because of the rationality of the
	$m_j$. Let us write $\partial B\subset\mathfrak{t}$ 
	for the unit sphere
	with respect to some fixed norm. We
	then have $w_K(p,\lambda) > 0$ on $\mathcal{C}\cap \partial B$, 
	but the latter is compact, so there exists an open 
	neighbourhood $U\subset \partial B$ of $\mathcal{C}\cap \partial
	B$ for which
	\[ w_K(p,\lambda) > 0\,\quad\text{ for }\lambda\in
	U.\]
	At the same time $|w_K(p,\lambda)|<C$ for some constant $C$ and
	all $\lambda\in \partial B$. In addition there exists $\delta>0$ such
	that $w_L(p,\lambda) > \delta$ for $\lambda\not\in U$.
	Finally it follows that if $\epsilon < \delta/C$, then  
	\[ w_L(p,\lambda) + \epsilon w_K(p,\lambda) > 0\]
	for all $\lambda$. 

	For the converse direction, 
	we note that $|w_K(p,\lambda)| <
	C$ for some constant $C$ and all $\lambda\in \partial B$, with $C$
	independent of $p$. If $p$ is not
	semistable with respect to $L$, then there is a one-parameter
	subgroup $\lambda$ for which $w_L(p,\lambda) = -\delta < 0$. But
	then for all $\epsilon < \delta / (C|\lambda|)$ we have
	\[ w_L(p,\lambda) + \epsilon w_K(p,\lambda) < -\delta +
	C\epsilon|\lambda| < 0,\]
	so $p$ is not stable with respect to $L+\epsilon K$ for any
	sufficiently small $\epsilon$. The fact that we need
	$w_K(p,\lambda) > 0$ for all $\lambda$ such that
	$w_L(p,\lambda)=0$ is immediate. 

	Now suppose that $K$ is not ample. We
	can choose a large constant $c$ such that $cL+K$ is ample. Then
	note that for small $\epsilon$
	\[ L + \epsilon K = 
	(1-\epsilon c)\left( L + \frac{\epsilon}{1-\epsilon
	c}(cL+K)\right),\]
	so if $p$ is polystable
	with respect to $L+\epsilon K$ then is is also polystable
	with respect to $L+\frac{\epsilon}{1-\epsilon c}(cL +
	K)$, where $cL+K$ is ample. If $\epsilon$ is sufficiently small,
	then we can apply what we just proved. So $p$ is polystable with
	respect to $L+\epsilon K$ if and only if $p$ is semistable with
	respect to $L$ and for every one-parameter
	subgroup $\lambda$ for which $w_L(p,\lambda)=0$ and $\lambda$
	does not fix $p$, we have $w_K(p,\lambda) =
	w_{cL+K}(p,\lambda)>0$.
\end{proof}

\section{Background on extremal metrics} \label{sec:prelim}
In this section we collect some material which we will need later on. 

\subsection{The extremal metric equation}
As we said before, the basic strategy to obtain an extremal metric on a
blowup $Bl_pM$ is to first use the extremal metric $\omega$ 
on $M$ and a simple
gluing argument to obtain an approximately extremal metric
$\omega_\epsilon$ on $Bl_pM$ and then to try perturbing this to an
extremal metric. When we set up the problem more precisely 
later we will have a
maximal torus of automoprhisms $T$ acting on $Bl_pM$, preserving the
approximate solution $\omega_\epsilon$ and we will seek an extremal
metric of the form 
\[ \omega_\epsilon + \ddbar\phi,\]
where $\phi$ is $T$-invariant. Let us write
$\overline{\mathfrak{t}} \subset C^\infty(Bl_pM)$ for the space of
Hamiltonian functions generating elements of $T$, which includes the
constants. Note that $\dim\overline{\mathfrak{t}}=\dim\mathfrak{t}+1$ where
$\mathfrak{t}$ is the Lie algebra of $T$, and $\overline{\mathfrak{t}}$
consists of the smooth $T$-invariant functions in the kernel of the
Lichnerowicz operator on $Bl_pM$, defined in Section~\ref{sec:Lichnerowicz}.
We need the following which can also be found in
\cite{APS06}.
\begin{lem}\label{lem:basiceqn}
	Suppose that $\phi\in C^\infty(Bl_pM)^T$ and 
	$f\in\overline{\mathfrak{t}}$ such that
	\begin{equation}
		\label{eq:basiceqn}
		\mathbf{s}
		(\omega_\epsilon+\ddbar\phi) - \frac{1}{2}\nabla
		f\cdot\nabla\phi = f,
	\end{equation}
	where the gradient and inner product are computed with respect
	to the
	metric $\omega_\epsilon$.
	Then $\omega_\epsilon + \ddbar\phi$ is an extremal metric. 
\end{lem}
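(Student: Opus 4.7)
The plan is to recognize the right-hand side of \eqref{eq:basiceqn} as the Hamiltonian potential, with respect to the new metric $\tilde\omega := \omega_\epsilon + \ddbar\phi$, of the holomorphic Killing field associated to $f$. Granting this, \eqref{eq:basiceqn} will then say that $\mathbf{s}(\tilde\omega)$ itself is a Hamiltonian potential for a holomorphic Killing field of $\tilde\omega$, which is exactly the extremal condition of Calabi.

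Since $f \in \overline{\mathfrak{t}}$, the $\omega_\epsilon$-Hamiltonian vector field $X_f$ lies in $\mathfrak{t}$ and is, in particular, a real holomorphic Killing field for $\omega_\epsilon$. Because $\phi$ is $T$-invariant we have $X_f\phi = 0$, and because $X_f$ preserves the complex structure $J$ it commutes with $\partial$ and $\dbar$, so $L_{X_f}\ddbar\phi = \ddbar(X_f\phi) = 0$. In particular $\tilde\omega$ is $T$-invariant and $X_f$ remains a holomorphic Killing field for $\tilde\omega$.

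To identify the $\tilde\omega$-Hamiltonian $\tilde f$ of $X_f$, I would write $\ddbar = \tfrac{1}{2}dd^c$ with $d^c = i(\dbar - \partial)$ and apply Cartan's formula. Since $X_f$ commutes with $d^c$ and $X_f\phi = 0$,
\[ \iota_{X_f}(\ddbar\phi) = \tfrac{1}{2}L_{X_f}d^c\phi - \tfrac{1}{2}d\bigl(d^c\phi(X_f)\bigr) = -\tfrac{1}{2}d\bigl(d^c\phi(X_f)\bigr). \]
A short computation in holomorphic coordinates, exploiting that $X_f$ is the $\omega_\epsilon$-Hamiltonian of $f$, then yields the pointwise identity $d^c\phi(X_f) = -\nabla f\cdot\nabla\phi$, with the gradient and inner product taken with respect to $\omega_\epsilon$. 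Combining with $\iota_{X_f}\omega_\epsilon = df$ we obtain
\[ \iota_{X_f}\tilde\omega = d\!\left(f + \tfrac{1}{2}\nabla f\cdot\nabla\phi\right), \]
so $\tilde f = f + \tfrac{1}{2}\nabla f\cdot\nabla\phi$ is the $\tilde\omega$-Hamiltonian of $X_f$.

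Equation \eqref{eq:basiceqn} then reads $\mathbf{s}(\tilde\omega) = \tilde f$, which says that the scalar curvature of $\tilde\omega$ is a Hamiltonian potential of the holomorphic Killing field $X_f$. Equivalently, the $(1,0)$-part of the $\tilde\omega$-gradient of $\mathbf{s}(\tilde\omega)$ equals (up to sign and the $(1,0)$-projection) the holomorphic vector field $X_f$, so $\tilde\omega$ is extremal. The only step requiring care is the coordinate identity $d^c\phi(X_f) = -\nabla f\cdot\nabla\phi$, but this is a standard K\"ahler computation and presents no real obstacle.
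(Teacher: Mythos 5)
Your proposal is correct and follows essentially the same route as the paper: both arguments compute $\iota_{X_f}(\ddbar\phi)$ via the Cartan formula (using $T$-invariance of $\phi$), identify $f+\tfrac{1}{2}\nabla f\cdot\nabla\phi$ as the $\tilde\omega$-Hamiltonian of the holomorphic field $X_f$, and conclude that $\mathbf{s}(\tilde\omega)$ generates a holomorphic vector field. Your explicit remark that $X_f\phi=0$ is needed in the Cartan step is a point the paper leaves implicit.
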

\begin{proof}
	Let $X$ be the holomorphic vector field on $Bl_pM$ with
	Hamiltonian function $f$, ie. 
	\[ df = \iota_X\omega_\epsilon.\]
	At the same time we can compute 
	\[ \iota_X(\ddbar \phi) = \frac{1}{2} d(JX(\phi)). \]
	Since $JX=\nabla f$, by
	combining the previous two formulae we get
	\[ 
		\iota_X(\omega_\epsilon + \ddbar\phi) = d\left(f +
		\frac{1}{2} \nabla f\cdot\nabla\phi\right) =
		d\mathbf{s}(\omega_\epsilon+\ddbar\phi).
	\]
	This means that $\omega_\epsilon + \ddbar\phi$ is an extremal
	metric.
\end{proof}

In order to solve Equation~(\ref{eq:basiceqn}) as a perturbation problem,
we will write it in the form
\begin{equation}\label{eq:2}
	\mathbf{s}(\omega_\epsilon+\ddbar\phi) -
	\frac{1}{2}\nabla(\tilde{s}+\tilde{f})\cdot\nabla\phi =
	\tilde{s}+\tilde{f},
\end{equation}
where $\tilde{s},\tilde{f}\in \overline{\mathfrak{t}}$, and
$\tilde{s}$ is chosen so that
the holomorphic vector field $\nabla\tilde{s}$ is the
natural holomorphic lift of the vector field $\nabla \mathbf{s}(\omega)$ 
on $M$.
In addition we can normalise $\tilde{s}$ so that it agrees with
$\mathbf{s}(\omega)$ outside a small ball around $p$, 
where the metrics $\omega$
and $\omega_\epsilon$ coincide. The advantage of this is that we now
seek $\phi$ and $\tilde{f}$ which are small, or in other words, setting
$\phi=0$ and $\tilde{f}=0$ we get an approximate solution to the
equation. 

For any metric $\tilde{\omega}$ let us define the operators
$L_{\tilde{\omega}}$ and $Q_{\tilde{\omega}}$ by
\begin{equation}\label{eq:scal}
	\mathbf{s}(\tilde{\omega} + \ddbar\phi) = \mathbf{s}
	(\tilde{\omega}) +
	L_{\tilde{\omega}}(\phi) + Q_{\tilde{\omega}}(\phi),
\end{equation}
where $L$ is the linearized operator. A simple computation shows that
\[ L_{\tilde{\omega}}(\phi) = \Delta^2_{\tilde{\omega}}\phi +
\mathrm{Ric}(\tilde{\omega})^{i\bar j}\phi_{i\bar j},\]
and analysing this operator will be crucial later on. We are using the
complex Laplacian here which is half of the usual Riemannian one.  

At the same time note that the
linear operator appearing in the linearization of Equation~(\ref{eq:2}) is
\begin{equation}
	\label{eq:linop}
	(\phi,\tilde{f}) \mapsto  L_{\omega_\epsilon}(\phi) -
	\frac{1}{2}\nabla\tilde{s}\cdot 
	\nabla \phi - \tilde{f},
\end{equation}
which is closely related to the Lichnerowicz operator. 

\subsection{The Lichnerowicz operator}\label{sec:Lichnerowicz}
For any K\"ahler metric
$\tilde{\omega}$ on a manifold $X$ we have the operator
\[ \mathcal{D}_{\tilde{\omega}}:C^\infty(X) \to \Omega^{0,1}(T^{1,0}X),\]
given by $\mathcal{D}(\phi) = \dbar\nabla^{1,0}\phi$ where $\dbar$ is
the natural $\dbar$-operator on the holomorphic tangent bundle. 
The Lichnerowicz operator is then the fourth order operator
\[ \mathcal{D}^*_{\tilde{\omega}}\mathcal{D}_{\tilde{\omega}} 
: C^\infty(X)\to C^\infty(X),\]
whose significance is that the kernel consists of precisely those 
functions whose gradients are holomorphic vector fields.
The relation to the operator in Equation~(\ref{eq:linop}) is that a
computation (see eg. LeBrun-Simanca~\cite{LS94}) shows that
\begin{equation}\label{eq:Lichn}
	\mathcal{D}^*_{\tilde{\omega}}\mathcal{D}_{\tilde{\omega}}(\phi)
	= L_{\tilde{\omega}}(\phi) - \frac{1}{2}\nabla
	\mathbf{s}(\tilde{\omega})\cdot\nabla\phi,
\end{equation}
but note that in general $\mathbf{s}(\tilde{\omega})$ is not equal to
$\tilde{s}$.

\subsection{Burns-Simanca metric}\label{sec:BurnsSimanca}
The approximate metric
$\omega_\epsilon$ on $Bl_pM$ is constructed by gluing the extremal
metric $\omega$ on $M$ to a rescaling of a 
suitable model metric on $Bl_0\mathbf{C}^m$,
ie. on the blowup of $\mathbf{C}^m$ at the origin. This model metric is
a scalar flat metric found by Burns (see LeBrun~\cite{LeBrun88})
for $m=2$ and by
Simanca~\cite{Sim91}
for
$m\geqslant 3$. Away from the exceptional divisor it can be written in
the form
\[ \eta = \ddbar\left( \frac{1}{2}|z|^2 +\psi(z) \right),\]
where $z=(z_1,\ldots,z_m)$ are standard coordinates on $\mathbf{C}^m$.
For $m=2$ we have $\psi(z) = \log|z|$ while for $m>2$ we have 
\[ \psi(z) = -|z|^{4-2m} + O(|z|^{3-2m}) \]
for large $|z|$. The quantity $O(|z|^{3-2m})$ is a function in the space
$C^{k,\alpha}_{3-2m}(Bl_0\mathbf{C}^m)$ in the notation of
section~\ref{sec:gluing}, for any $k$ and $\alpha\in(0,1)$. See
Lemma~\ref{lem:BSmetric} for a sharper asymptotic expansion.

\section{The main argument}\label{sec:mainargument}
Suppose as before that $\omega$ is an extremal K\"ahler metric on $M$.
Let $X_\mathbf{s}$ be the Hamiltonian vector field corresponding to the
scalar curvature $\mathbf{s}(\omega)$.  
Write $G$ for the Hamiltonian isometry group of
$(M,\omega)$, so the Lie algebra $\mathfrak{g}$ of $G$ consists
of holomorphic Killing fields with zeros. 

Choose a point $p\in M$ where the vector
field $X_\mathbf{s}$ vanishes, and let $T\subset G$ be a maximal torus
of the subgroup fixing $p$. Let $H\subset G$ consist of the elements
commuting with $T$ and let us write $\overline{\mathfrak{h}}
\subset C^{\infty}(M)$
for the space of Hamiltonian functions of vector fields in the Lie
algebra of $H$. Note that $\overline{\mathfrak{h}}$ contains
the constants as well. 
Let us also write $\overline{\mathfrak{t}}\subset\overline{\mathfrak{h}}$ 
for the Hamiltonian functions corresponding to the subgroup $T\subset
H$. 

Given a small parameter $\epsilon > 0$,
we will construct an approximate solution to our problem 
on $Bl_pM$ in the K\"ahler class $[\omega] - \epsilon^2d_m[E]$, for some
constant $d_m$ depending on the dimension, so that $d_m^{m-1}$ is the
volume of the exceptional divisor of $Bl_0\mathbf{C}^m$ with the
Burns-Simanca metric $\eta$ from Section~\ref{sec:BurnsSimanca}. Of
course we could make $d_m=1$ by rescaling $\eta$. 
For simplicity assume that the exponential map is defined on the unit
ball in the tangent space $T_pM$ (if not, we can scale up the metric
$\omega$). Choose local normal
coordinates $z$ near $p$ such that the group $T$
acts by unitary transformations on the unit ball $B_1$ around $p$ (this
is possible by linearizing the action, see Bochner-Martin~\cite{BM48}
Theorem 8). In
these coordinates we can write
\[ \omega = \ddbar\big(|z|^2/2 + \phi(z)\big),\]
where $\phi=O(|z|^4)$. At the same time recall
the Burns-Simanca metric
\[ \eta = \ddbar\big(|z|^2/2 + \psi(z)\big).\]
We glue $\epsilon^2\eta$ to $\omega$ using a cutoff function in the
annulus $B_{2r_\epsilon}\setminus B_{r_\epsilon}$ in $M$, where the
dependence of $r_\epsilon$ on $\epsilon$ will be chosen later. 
To do this, let $\gamma:\mathbf{R}\to[0,1]$ be smooth such
that $\gamma(x)=0$ for $x < 1$ and $\gamma(x)=1$ for $x > 2$ and then 
define
\[ \gamma_1(r) = \gamma(r/r_\epsilon), \] and
write $\gamma_2=1-\gamma_1$. Then we can define a K\"ahler metric
$\omega_\epsilon$ on $Bl_pM$ which on the annulus $B_1\setminus
B_\epsilon$ is given by 
\[
	\omega_\epsilon = \ddbar\left( \frac{|z|^2}{2} 
	+ \gamma_1(|z|)\phi(z) 
+ \gamma_2(|z|) \epsilon^2\psi(\epsilon^{-1}
z)\right).\]
Moreover outside $B_{2r_\epsilon}$ the metric $\omega_\epsilon=\omega$ while
inside the ball $B_{r_\epsilon}$ we have
$\omega_\epsilon=\epsilon^2\eta$. 
Note that the action of $T$ lifts to $Bl_pM$ 
giving biholomorphisms,
and that $\omega_\epsilon$ is $T$-invariant.

It will be important to lift functions in $\overline{\mathfrak{h}}$ to 
$Bl_pM$. Only elements in
$\overline{\mathfrak{t}}$ have a natural lifting, since they correspond to
holomorphic vector fields vanishing at $p$, so we give the following
definition.

\begin{defn}\label{def:lifting}
	We define a linear map
	\[ \l : \overline{\mathfrak{h}} \to C^\infty(Bl_pM) \]
	as follows. 
	First let us decompose $\overline{\mathfrak{h}}$ into a direct sum
	$\overline{\mathfrak{h}}=\overline{\mathfrak{t}}\oplus\mathfrak{h}'$,
	where we can 
	assume that each function in $\mathfrak{h}'$ vanishes at $p$. Any
	$f\in\overline{\mathfrak{t}}$ 
	corresponds to a holomorphic vector field $X_f$ on
$M$ vanishing at $p$. 
For such $f$ we define $\l(f)$ to be the Hamiltonian
function of the holomorphic lift of the vector field $X_f$ to
$Bl_pM$,
with respect to the symplectic form $\omega_\epsilon$, normalized so
that $f=\l(f)$ outside $B_1$. For
$f\in\mathfrak{h}'$ we simply let $\l(f)=\gamma_1 f$ near $p$
using the cutoff function $\gamma_1$ 
from before, and we think of this $\l(f)$
as a function on $Bl_pM$. Finally define the lift
of general elements in $\overline{\mathfrak{h}}$ by linearity.
\end{defn}

We can now state the main technical result we need, whose proof will be
given in Section~\ref{sec:gluing}.

\begin{prop}\label{prop:gluing}
	Suppose that the point $p\in M$ is chosen so that
	the vector field $X_\mathbf{s}$ vanishes at $p$. 
	Then there are constants $\epsilon_0, c > 0$ such that 
	for all
	$\epsilon\in (0,\epsilon_0)$ we can find $u\in
	C^\infty(Bl_pM)^T$ and $f\in\overline{\mathfrak{h}}$
	satisfying the equation
	\begin{equation}\label{eq:glued}
		\mathbf{s}
		(\omega_\epsilon+\ddbar u) - \frac{1}{2}\nabla
		\l(f)\cdot\nabla u = \l(f).
	\end{equation}
	In addition the element $f\in\overline{\mathfrak{h}}$ has
	an expansion
	\begin{equation}
		\label{eq:expansion}
		f = \mathbf{s}  +
		\epsilon^{2m-2}(\lambda+c_m \mu(p)) + f_\epsilon,
	\end{equation}
	where $c_m$ is a constant depending only on the dimension,
	$\lambda = \mathrm{Vol}(M)^{-1}c_m$ is another constant,
	and $|f_\epsilon| \leqslant c\epsilon^\kappa$ for some
	$\kappa > 2m-2$.
\end{prop}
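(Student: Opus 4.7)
The plan is to solve Equation~(\ref{eq:glued}) via the implicit function theorem in weighted H\"older spaces on $Bl_pM$, working throughout on $T$-invariant subspaces so that the symmetry is preserved. Following the reformulation in Section~\ref{sec:prelim}, write $f=\tilde s+\tilde f$ where $\tilde s\in\overline{\mathfrak t}$ is normalised so that $\nabla\l(\tilde s)$ is the natural holomorphic lift of $X_\mathbf{s}$ and $\tilde s=\mathbf{s}(\omega)$ outside a small ball around $p$; this uses the hypothesis $X_\mathbf{s}(p)=0$ to guarantee that the lift exists. Using (\ref{eq:scal}) and (\ref{eq:Lichn}), Equation~(\ref{eq:glued}) becomes a nonlinear equation in the pair $(u,\tilde f)$ whose linearisation at $(0,0)$ is
\[ \mathcal{L}_\epsilon(u,\tilde f) = \mathcal{D}^*_{\omega_\epsilon}\mathcal{D}_{\omega_\epsilon}u - \l(\tilde f), \]
up to lower order terms, while the nonlinear correction $Q_{\omega_\epsilon}(u)$ is quadratic in $u$. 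The goal reduces to producing a right inverse of $\mathcal{L}_\epsilon$ whose operator norm is bounded uniformly in $\epsilon$, and then running a contraction mapping argument.

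Two analytic inputs are needed. First, I would estimate the initial error $E_\epsilon:=\mathbf{s}(\omega_\epsilon)-\l(\tilde s)$: on the outer region $\omega_\epsilon=\omega$ and on the inner region $\omega_\epsilon=\epsilon^2\eta$ is scalar-flat, so $E_\epsilon$ is supported in the gluing annulus $B_{2r_\epsilon}\setminus B_{r_\epsilon}$, and the Burns--Simanca expansion $\psi(z)=-|z|^{4-2m}+O(|z|^{3-2m})$ (with its logarithmic analogue for $m=2$) shows that $E_\epsilon$ is of order $\epsilon^{2m-2}$ in a suitable weighted H\"older norm provided $r_\epsilon$ is chosen to balance the contributions from either side of the annulus. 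Second, I would construct the right inverse of $\mathcal{L}_\epsilon$: the two model operators at $\epsilon=0$ are $\mathcal{D}^*_\omega\mathcal{D}_\omega$ on $M$, whose kernel is $\overline{\mathfrak g}$, and $\mathcal{D}^*_\eta\mathcal{D}_\eta$ on $Bl_0\mathbf{C}^m$, which admits a bounded right inverse on suitably decaying functions by the asymptotically Euclidean scalar-flat theory. A standard APS-style patching argument then produces a uniformly bounded right inverse on the complement of $\l(\overline{\mathfrak h})$---which is exactly why the parameter $\tilde f$ is introduced to absorb the cokernel. Uniform invertibility is where I expect the bulk of the technical work to lie; once it is in hand, contraction in a ball of radius $c\epsilon^{2m-2}$ produces the solution $(u,\tilde f)$.

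Finally, to extract the expansion (\ref{eq:expansion}) for $f=\tilde s+\tilde f$, I would compute the $L^2$-projection of $E_\epsilon$ onto $\overline{\mathfrak h}$. For $g\in\overline{\mathfrak h}$, integration by parts against the Burns--Simanca expansion---using the sharper asymptotics of Lemma~\ref{lem:BSmetric}---reduces the pairing $\langle E_\epsilon,g\rangle$ to a residue-type computation localised at $p$. By the same dimensional calculation used in \cite{APS06}, this evaluates at leading order in $\epsilon$ to $c_m g(p)$ for a constant $c_m$ depending only on $m$. Since $g(p)=\langle\mu(p),X_g\rangle$ by the definition of the moment map, and $\mu(p)\in\mathfrak h$ because $p$ is fixed by $T$, this identifies the leading obstruction with the Hamiltonian corresponding to $c_m\mu(p)$, while pairing against the constant function $1$ produces the $\lambda=\mathrm{Vol}(M)^{-1}c_m$ term (forced by the zero-mean normalisation of $\mu$). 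All subleading contributions---from the finer tail of $\psi$ and the quadratic remainder $Q_{\omega_\epsilon}(u)$---are absorbed into $f_\epsilon$, and the bound $|f_\epsilon|\leq c\epsilon^\kappa$ with $\kappa>2m-2$ follows from the contraction estimates.
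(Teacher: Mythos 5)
Your overall strategy (perturbative scheme in weighted H\"older spaces with the finite-dimensional parameter $\tilde f$ absorbing the cokernel, followed by contraction) is the right one, but you have effectively skipped the step that the proof actually hinges on, and as a result your derivation of the expansion~(\ref{eq:expansion}) does not close up.

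The paper does \emph{not} run the contraction map off the crude approximate metric $\omega_\epsilon$ and then try to read off the leading order of $\tilde f$ a posteriori. Instead it first improves the approximate solution: one solves
\[ \mathcal{D}_\omega^*\mathcal{D}_\omega\Gamma = h \quad\text{on } M\setminus\{p\},\qquad \Gamma(z) = -|z|^{4-2m} + O(|z|^{5-2m}), \]
for a $T$-invariant $\Gamma$ and some $h\in\overline{\mathfrak h}$, and replaces $\omega_\epsilon$ by $\tilde\omega_\epsilon = \omega_\epsilon + \ddbar(\epsilon^{2m-2}\gamma_1\Gamma)$, with the ansatz $u = \epsilon^{2m-2}\gamma_1\Gamma + v$, $f = \mathbf{s} + \epsilon^{2m-2}h + g$. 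Because $\Gamma$ kills the $-|z|^{4-2m}$ singularity that $\epsilon^2\psi(\epsilon^{-1}z)$ contributes on the gluing annulus, the remaining error $F$ satisfies $\Vert F\Vert_{C^{0,\alpha}_{\delta-4}} \leqslant c\,r_\epsilon^{4-\delta}$, which with the choice $r_\epsilon=\epsilon^{(2m-1)/(2m+1)}$ and $\delta$ close to $4-2m$ is $O(\epsilon^\kappa)$ with $\kappa>2m-2$. The contraction then bounds $|g|$ by this improved error. The coefficient $h=c_m\mu(p)+\lambda$ is identified \emph{by construction}, from the fact that $\Gamma$ is a distributional solution of $\mathcal{D}_\omega^*\mathcal{D}_\omega\Gamma = h - c_m\delta_p$, so integrating against any $g\in\overline{\mathfrak h}$ forces $h = c_m\mu(p)+\lambda$.

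By contrast, your scheme contracts in a ball of radius $c\epsilon^{2m-2}$ directly, and then proposes to read off the leading term of $\tilde f$ by taking the $L^2$-projection of $E_\epsilon$ onto $\overline{\mathfrak h}$. Two things break. First, the $\overline{\mathfrak h}$-component $f$ produced by the right inverse of Proposition~\ref{prop:Mp} is \emph{not} the $L^2$-orthogonal projection onto $\overline{\mathfrak h}$; it comes from the duality/Fredholm theory in weighted spaces on $M_p$ and in general differs from the $L^2$-projection by terms of the same order. To make your projection argument rigorous you would still, in effect, have to produce a preferred representative of the image of $E_\epsilon$ under the right inverse whose $\overline{\mathfrak h}$-component is computable — which is exactly what the $\Gamma$-ansatz does. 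Second, even granting the leading-order identification, you have only shown $|\tilde f| = O(\epsilon^{2m-2})$; you have not shown that the remainder $f_\epsilon$ after subtracting $\epsilon^{2m-2}(\lambda + c_m\mu(p))$ is $O(\epsilon^\kappa)$ for some $\kappa>2m-2$. The quadratic remainder of the contraction is indeed smaller, but the linear part of your error still contributes at order $\epsilon^{2m-2}$ to $\tilde f$ unless you have already subtracted the explicit $\Gamma$-correction that matches $\omega$ to the Burns--Simanca metric to the next order. The strict inequality $\kappa>2m-2$ is what feeds into Proposition~\ref{prop:deform} via $\mu_\epsilon\to\mu$, so it cannot be waived.

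In short: the ``residue-type computation'' you gesture at is the right computation, but it must be built into the ansatz for $(u,f)$ \emph{before} contracting, not extracted from the fixed point afterwards. Once you insert the $\Gamma$-modification and re-estimate the error as in Lemma~\ref{lem:F}, the rest of your outline (uniformly bounded right inverse glued from Propositions~\ref{prop:Mp} and~\ref{prop:BlC}, contraction via Lemma~\ref{lem:contract2}) matches the paper.
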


Note that in this proposition, $\l(f)$ corresponds to a
Hamiltonian vector field $X_{\l(f)}$ on $Bl_pM$, and if this vector
field is holomorphic, then the metric $\omega_\epsilon + \ddbar u$ above
is extremal by Lemma~\ref{lem:basiceqn}. Moreover $X_{\l(f)}$ is
holomorphic if and only if $f\in\overline{\mathfrak{t}}$, ie. if the vector
field $X_f$ on $M$ vanishes at $p$. Given this proposition we can now prove
Theorem~\ref{thm:main}.

\begin{proof}[Proof of Theorem~\ref{thm:main}]
	We will give the proof for the blowup of one point to simplify
	the notation, since blowing up several points does not give rise to
	essential new difficulties. 
	Let us use the notation from before, so that $G$ is the
	Hamiltonian isometry group of $(M,\omega)$, $p\in M$ and $T$ is
	a maximal torus in the stabilizer of $p$. The subgroup $H\subset
	G$ consists of the elements commuting with each element of $T$,
	and let $\mathfrak{h}, \mathfrak{t}$ be the Lie algebras of
	$H,T$. Note that in this case $\mathfrak{h}_p=\mathfrak{t}$,
	where $\mathfrak{h}_p$ is the stabilizer of $p$ in
	$\mathfrak{h}$. 

	We will work on the $H^c$-orbit of $p$, so let us write
	$U=H^c\cdot p$. Then $U\subset M$ is an $H$-invariant
	complex submanifold. If $\mu(p)\in\mathfrak{h}_p$, then the
	stabilizer of $p$ in $H^c$ is $T^c$. This can be seen using the
	structure of the stabilizer group of relatively stable points
	(analogous to Calabi's structure theorem for the automorphism
	groups of extremal metrics~\cite{Cal85}). 
	Since every element in $H^c$ commutes with $T$, it follows
	that for every $q\in U$ the stabilizer of $q$ in
	$H$ is $T$. We
	can therefore apply Proposition~\ref{prop:gluing} to each point
	$q\in U$ with $T$ as the maximal torus. We can first replace $U$
	by a relatively compact complex submanifold $U'\subset\subset U$
	which is still $H$-invariant and contains $p$, 
	to ensure that we can choose
	$\epsilon,c$ in the proposition uniformly over $U'$. Note that
	the solution of the equation in Proposition~\ref{prop:gluing} is
	obtained using the contraction mapping theorem, so although the
	solution is not unique, the various choices can be made so that
	it depends smoothly on the data.

	For a suitably small $\epsilon$ we therefore have a smooth map
	\[\begin{aligned}  \mu_\epsilon : U' &\to \mathfrak{h} \\
			\mu_\epsilon(q)&= \mu(q) +
			c_m^{-1}\epsilon^{2-2m}f_\epsilon,
		\end{aligned}\]
	where $f_\epsilon\in \mathfrak{h}$ is given by the
	Proposition \ref{prop:gluing}
	applied at the point $q$ (so $f_\epsilon$ depends on
	$q$). 
	Since $|f_\epsilon|\leqslant c\epsilon^\kappa$ for
	some $\kappa > 2m-2$, it follows that 
	\[ \lim_{\epsilon\to 0}\mu_\epsilon = \mu.\]
	Applying
	Proposition~\ref{prop:deform} we see that if the vector field
	$\mu(p)$ vanishes at the point $p$
	then for sufficiently small $\epsilon > 0$ we can find a
	point $q$ in the $H^c$-orbit of
	$p$ such that $\mu_\epsilon(q)$ vanishes at $q$. 
	Note that $X_\mathbf{s}$ is in the center of $\mathfrak{g}$, so
	if the vector field $X_\mathbf{s}$ vanishes at $p$ then it also
	vanishes at $q$. This means that when applying 
	Proposition~\ref{prop:gluing} at the point $q$, the element
	$f\in\overline{\mathfrak{h}}$ is actually in
	$\overline{\mathfrak{t}}$, ie. $X_f$ vanishes at $q$. 
	By Lemma~\ref{lem:basiceqn} we therefore 
	obtain an extremal metric on the blowup
	$Bl_qM$ in the K\"ahler class
	\[ [\omega] - \epsilon^2d_m[E].\]
	Since $q$ is in the $H^c$-orbit of $p$, the
	manifolds $Bl_qM$ and $Bl_pM$ are
	biholomorphic so we have constructed an extremal metric on the
	blowup $Bl_pM$.  
\end{proof}

\section{The gluing argument}
\label{sec:gluing}
In this section we prove Proposition~\ref{prop:gluing}. As before
we will only blow up one point, but the proof of the general case is
identical apart from more complicated notation. We will
mainly focus on the case $m\geqslant 3$ since the
case $m=2$ needs special care but we will make brief comments on how to
adapt the arguments when $m=2$.
We first need some analytic preliminaries.

\subsection{The Lichnerowicz operator on weighted spaces}
The key to solving our equation using a perturbation method is to
construct an inverse to the linear operator (\ref{eq:linop}) and to
control its inverse acting between suitable Banach spaces. It turns out
that weighted H\"older spaces are suitable spaces to work in and in
order to understand the mapping properties of the operator
(\ref{eq:linop}) between these spaces on the blowup $Bl_pM$ we first
need to understand the behaviour of the Lichnerowicz operator on weighted
spaces on the manifolds $M\setminus\{p\}$ and $Bl_p\mathbf{C}^m$. This
is the fundamental tool in Arezzo-Pacard~\cite{AP06,AP09} and
Arezzo-Pacard-Singer~\cite{APS06} and we follow their treatment here.
See also Lockhart-McOwen~\cite{LM85}, Mazzeo~\cite{Maz91},
Melrose~\cite{Mel93} or Pacard-Rivi\`ere~\cite{PR00} for
more details on weighted spaces.

First we look at $M_p=M\setminus\{p\}$ with the metric $\omega$. For
functions $f:M_p \to\mathbf{R}$ we define the weighted norm
\[ \Vert f\Vert_{C^{k,\alpha}_\delta(M_p)} = \Vert
f\Vert_{C^{k,\alpha}_\omega(M\setminus B_{1/2})}
+ \sup_{r < 1/2} r^{-\delta}
\Vert f\Vert_{C^{k,\alpha}_{r^{-2}\omega}(B_{2r}
\setminus B_r)}.\]
Here the subscripts $\omega$ and $r^{-2}\omega$ indicate the metrics
used for computing the corresponding norm. The weighted space
$C^{k,\alpha}_\delta(M_p)$ consists of functions on
$M\setminus\{p\}$ which are locally in $C^{k,\alpha}$ and whose
$\Vert\cdot\Vert_{C^{k,\alpha}_\delta}$ norm is finite.

The main result we need is the following.
\begin{prop}\label{prop:Mp}
	If $\delta < 0$ and $\alpha\in(0,1)$ then the operator 
	\[ \begin{aligned}
		C^{4,\alpha}_{\delta}(M_p)^T
		\times\overline{\mathfrak{h}}
		&\to C^{0,\alpha}_{\delta-4}(M_p)^T \\
		(\phi, f) &\mapsto
		\mathcal{D}_\omega^*\mathcal{D}_\omega
		(\phi) - f
	\end{aligned}\]
	has a bounded right-inverse. Here $T$ is a torus of isometries
	of $(M,\omega)$ and $\overline{\mathfrak{h}}$ 
	is the space of $T$-invariant
	Hamiltonian functions of holomorphic killing fields.
\end{prop}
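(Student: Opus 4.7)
The strategy is standard weighted Fredholm theory for elliptic operators on a manifold with an isolated conical end (the puncture at $p$), as developed by Lockhart--McOwen, Mazzeo, Melrose and Pacard--Rivi\`ere. Near $p$ the principal part of $\mathcal{D}_\omega^*\mathcal{D}_\omega$ is the bi-Laplacian $\Delta^2$, whose indicial roots on $\mathbf{R}^{2m}\setminus\{0\}$ form a discrete subset of the integers. For any $\delta \in (4-2m, 0)$ not equal to an indicial root, the standard theory then gives that
\[
P := \mathcal{D}_\omega^*\mathcal{D}_\omega : C^{4,\alpha}_\delta(M_p)^T \to C^{0,\alpha}_{\delta-4}(M_p)^T
\]
is Fredholm.

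I would next compute the kernel and cokernel of $P$. If $u\in C^{4,\alpha}_\delta(M_p)^T$ lies in $\ker P$, then since $\delta > 4-2m$ is above the removability threshold for singularities of $\Delta^2$-type operators, $u$ extends smoothly across $p$ to a global $T$-invariant element of $\ker(\mathcal{D}_\omega^*\mathcal{D}_\omega)$ on the compact manifold $M$. That kernel classically consists of Hamiltonian functions of holomorphic Killing fields together with constants, so intersecting with the $T$-invariant slice gives exactly $\overline{\mathfrak{h}}$. The operator $P$ is formally self-adjoint with respect to the pairing $\int\!\cdot\,\omega^m$, and for $\delta > 4-2m$ this $L^2$-pairing is finite between $\overline{\mathfrak{h}}\subset C^{4,\alpha}_\delta$ and $C^{0,\alpha}_{\delta-4}$; the duality of weighted H\"older spaces then identifies the cokernel of $P$ again with $\overline{\mathfrak{h}}$, so $\mathrm{Im}(P)$ coincides with the $L^2$-orthogonal complement of $\overline{\mathfrak{h}}$ inside $C^{0,\alpha}_{\delta-4}(M_p)^T$.

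To construct the right inverse, given $g \in C^{0,\alpha}_{\delta-4}(M_p)^T$ let $h\in\overline{\mathfrak{h}}$ be its $L^2$-projection, which is a bounded operation because $\overline{\mathfrak{h}}$ is finite dimensional and consists of bounded smooth functions. Then $g-h$ lies in $\mathrm{Im}(P)$, so there exists a unique $\phi\in C^{4,\alpha}_\delta(M_p)^T$ orthogonal to $\ker P$ solving $P\phi = g-h$, and setting $f = -h \in \overline{\mathfrak{h}}$ yields the desired identity $P\phi - f = g$. Boundedness of $g \mapsto (\phi, f)$ follows from the open mapping theorem applied to $P$ on the orthogonal complement of its kernel, together with boundedness of the projection. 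The main technical point to verify is the Fredholm theorem itself and the cokernel duality on weighted H\"older spaces, but these are by now well documented in the references cited above.
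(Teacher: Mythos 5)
Your overall strategy — weighted-space Fredholm theory plus duality to identify the cokernel with $\overline{\mathfrak{h}}$ — is exactly what the paper does, and the mechanism of your right inverse (subtract off the projection onto $\overline{\mathfrak{h}}$ and invert on the image) is the correct one. However, there is a genuine gap in the range of $\delta$ you treat, and it matters for the way the proposition is used later in the paper.

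You restrict at the outset to $\delta\in(4-2m,0)$ so that you can argue that elements of $\ker(\mathcal{D}_\omega^*\mathcal{D}_\omega)$ in $C^{4,\alpha}_\delta(M_p)^T$ extend smoothly across $p$, i.e.\ so that you can identify the kernel with $\overline{\mathfrak{h}}$. But the statement asserts the result for \emph{all} $\delta<0$, and the paper actually needs $\delta<4-2m$ in the proof of Proposition~\ref{prop:inv} when $m=2$ (there $4-2m=0$, so your interval $(4-2m,0)$ is empty and your proof covers nothing). The fix, which is also what makes the paper's proof shorter, is to notice that the kernel plays no role in producing a \emph{right} inverse: only surjectivity is needed. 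By duality the cokernel of $\mathcal{D}^*_\omega\mathcal{D}_\omega:C^{4,\alpha}_\delta(M_p)^T\to C^{0,\alpha}_{\delta-4}(M_p)^T$ is the kernel of the formal adjoint at the dual weight $4-2m-\delta$. Since $\delta<0$ forces $4-2m-\delta>4-2m$ — irrespective of whether $\delta$ itself sits above or below $4-2m$ — and there are no indicial roots in $(4-2m,0)$, every element of that dual kernel is smooth across $p$ and hence lies in $\overline{\mathfrak{h}}$. This gives surjectivity of $(\phi,f)\mapsto\mathcal{D}^*_\omega\mathcal{D}_\omega\phi-f$ for all $\delta<0$ at once. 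Your kernel analysis is not wrong where it applies, but it is superfluous for building a right inverse and its hypothesis $\delta>4-2m$ is what creates the gap. Drop it and run the cokernel-only argument and the proof goes through in the full stated range.
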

\begin{proof}
	This follows from the duality theory in weighted spaces. The
	image of 
	\[ \mathcal{D}^*\mathcal{D} : C^{4,\alpha}_\delta(M_p) \to
	C^{0,\alpha}_{\delta-4}(M_p)\]
	is the orthogonal complement of the kernel of
	\[ \mathcal{D}^*\mathcal{D} : C^{4,\alpha}_{4-2m-\delta}(M_p)
	\to C^{0,\alpha}_{-2m-\delta}(M_p).\]
	If $\delta < 0$ then $4-2m-\delta > 4-2m$, so we need to see
	that if $h\in\mathrm{Ker}\mathcal{D}^*\mathcal{D}$ is such that
	$h\in C^{4,\alpha}_\gamma(M_p)^T$ for some $\gamma > 4-2m$, then
	$h$ is smooth. 
	This follows from the regularity theory in weighted spaces since
	there are no indicial roots in $(4-2m,0)$. 
\end{proof}

Let us turn now to the manifold $Bl_0\mathbf{C}^m$ with the
Burns-Simanca metric $\eta$. The relevant weighted H\"older norm is now
given by
\[ \Vert f\Vert_{C^{k,\alpha}_\delta(Bl_0\mathbf{C}^m)} = \Vert
f\Vert_{C^{k,\alpha}_\eta(B_2)} + \sup_{r > 1} r^{-\delta} \Vert
f\Vert_{C^{k,\alpha}_{r^{-2}\eta}(B_{2r}\setminus B_r)}.\]
Here we abused notation slightly by writing $B_r\subset Bl_0
\mathbf{C}^m$ for the set where $|z|<r$ (ie. the pullback of the
$r$-ball in $\mathbf{C}^m$ under the blowdown map).

The key result here is the following.
\begin{prop}\label{prop:BlC}
	For $\delta > 4-2m$ and $\alpha\in(0,1)$ the operator
	\[ \begin{aligned}
		C^{4,\alpha}_\delta(Bl_0\mathbf{C}^m)^T &\to
		C^{0,\alpha}_{\delta-4}(Bl_0\mathbf{C}^m)^T \\
		\phi &\mapsto \mathcal{D}^*_\eta\mathcal{D}_\eta(\phi)
	\end{aligned}\]
	has a bounded inverse. If $m=2$ then we should instead choose
	$\delta\in(3-2m,4-2m)$. In that case if we let $\chi$ be a
	compactly supported function on $Bl_0\mathbf{C}^m$ with non-zero
	integral, then the operator
	\begin{equation}\label{eq:op2}
		\begin{aligned}
		C^{4,\alpha}_\delta(Bl_0\mathbf{C}^m)^T\times\mathbf{R} &\to
		C^{0,\alpha}_{\delta-4}(Bl_0\mathbf{C}^m)^T \\
		(\phi, t) &\mapsto \mathcal{D}^*_\eta\mathcal{D}_\eta(\phi) +
		t\chi 
	\end{aligned}\end{equation}
	has bounded inverse.
\end{prop}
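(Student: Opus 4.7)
The plan is to extend the weighted-space Fredholm analysis used for Proposition~\ref{prop:Mp} to the asymptotically Euclidean setting of $(Bl_0\mathbf{C}^m,\eta)$. Since the Burns--Simanca potential satisfies $\psi(z)=-|z|^{4-2m}+O(|z|^{3-2m})$ (or $\log|z|+\ldots$ for $m=2$), the Lichnerowicz operator $\mathcal{D}^*_\eta\mathcal{D}_\eta$ is an asymptotically Euclidean perturbation of a constant multiple of $\Delta^2$ at infinity. The standard theory (Lockhart--McOwen, Pacard--Rivi\`ere) then yields that
\[\mathcal{D}^*_\eta\mathcal{D}_\eta : C^{4,\alpha}_\delta(Bl_0\mathbf{C}^m)^T\to C^{0,\alpha}_{\delta-4}(Bl_0\mathbf{C}^m)^T\]
is Fredholm whenever $\delta$ is not an indicial root; the indicial roots of $\Delta^2$ on $\mathbf{R}^{2m}$ are $\{\ldots,2-2m,3-2m,4-2m\}\cup\{0,1,2,\ldots\}$, and the weight ranges named in the proposition lie in intervals free of these roots.

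Next I would prove injectivity. Suppose $\phi\in C^{4,\alpha}_\delta(Bl_0\mathbf{C}^m)^T$ satisfies $\mathcal{D}^*_\eta\mathcal{D}_\eta\phi=0$ with $\delta<0$. The decay of $\phi$ justifies integrating by parts against $\phi$ itself, yielding
\[0=\int_{Bl_0\mathbf{C}^m} |\mathcal{D}_\eta\phi|^2\,\eta^m,\]
so that $\nabla^{1,0}\phi$ is a holomorphic vector field that decays at infinity. But every holomorphic vector field on $Bl_0\mathbf{C}^m$ descends to a holomorphic vector field on $\mathbf{C}^m$ vanishing at the origin; any such field grows at least linearly, so a decaying one must vanish. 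Hence $\phi$ is constant, and the decay forces $\phi\equiv 0$.

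For $m\geq 3$ and $\delta\in(4-2m,0)$, surjectivity now follows by $L^2$-duality: the cokernel at weight $\delta$ is identified with the kernel at the conjugate weight $4-2m-\delta$, which also lies in $(4-2m,0)$ and is therefore trivial by the injectivity argument. Combined with injectivity this gives the bounded two-sided inverse claimed.

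The case $m=2$ requires extra care because $4-2m=0$ is a (double) indicial root of $\Delta^2$ on $\mathbf{R}^4$. For $\delta\in(-1,0)$ the conjugate weight $-\delta\in(0,1)$ now admits the constant functions in the kernel of $\mathcal{D}^*_\eta\mathcal{D}_\eta$, but the Killing-potential analysis from the injectivity step rules out any other $T$-invariant kernel elements in this narrow growth window (any other $T$-invariant Killing potential on $Bl_0\mathbf{C}^2$ grows at least like $|z|^2$). Thus the cokernel at weight $\delta$ is one-dimensional, spanned by $1$ under the natural $L^2$-pairing, and adjoining the map $t\mapsto t\chi$ with $\int\chi\neq 0$ supplies exactly the missing direction, yielding invertibility of (\ref{eq:op2}). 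The main obstacle in executing this is the verification for $m=2$: one must check that the logarithmic behaviour of the Burns--Simanca metric does not introduce extra elements into the kernel beyond constants in the relevant weight window, and that the perturbation from $\Delta^2$ to $\mathcal{D}^*_\eta\mathcal{D}_\eta$ preserves the Fredholm index calculation on which the duality step relies.
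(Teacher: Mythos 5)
Your proof takes essentially the same route as the paper: Fredholm theory in weighted H\"older spaces plus $L^2$-duality, with the key input being that the $T$-invariant kernel of $\mathcal{D}^*_\eta\mathcal{D}_\eta$ at decaying weight is trivial. The difference is that the paper simply cites \cite{AP06} for this kernel fact, whereas you prove it directly by integrating by parts and classifying decaying holomorphic vector fields on $Bl_0\mathbf{C}^m$; this is a useful elaboration and makes the argument more self-contained.

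One technical point you should make explicit: the integration by parts does not converge directly for all $\delta$ in the stated range. If $\phi\in C^{4,\alpha}_\delta$ with $\delta\in(4-2m,0)$, then a priori $|\mathcal{D}_\eta\phi|^2=O(r^{2\delta-4})$, and $\int r^{2\delta-4}\,r^{2m-1}\,dr$ converges only for $\delta<2-m$, which excludes $\delta$ close to $0$ when $m\geqslant 3$. The fix is the standard one: since the indicial interval $(4-2m,0)$ contains no indicial roots of the model operator $\Delta^2$, any kernel element in $C^{4,\alpha}_\delta$ automatically lies in $C^{4,\alpha}_{4-2m+\epsilon}$ for arbitrarily small $\epsilon>0$, and that improved decay suffices to kill the boundary terms. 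A similar decay-improvement step is also what is needed to make the $m=2$ analysis rigorous, where you correctly flag the logarithmic indicial root at $0$ as the delicate point; a kernel element at conjugate weight $-\delta\in(0,1)$ a priori has a $\log r$ term, and one must show it decays to a genuine constant (or vanishes) before the integration-by-parts and classification argument applies. With those refinements spelled out, your proof is complete and matches the paper's.
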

\begin{proof}
	This is also a consequence of duality theory in weighted spaces.
	Once again the image of 
	\[ \mathcal{D}^*\mathcal{D} :
	C^{4,\alpha}_\delta(Bl_0\mathbf{C}^m) \to
	C^{0,\alpha}_{\delta-4}(Bl_0\mathbf{C}^m)\]
	is the orthogonal complement of the kernel of
	\[ \mathcal{D}^*\mathcal{D} : C^{4,\alpha}_{4-2m-\delta}(Bl_0
	\mathbf{C}^m)
	\to C^{0,\alpha}_{-2m-\delta}(Bl_0\mathbf{C}^m).\]
	If $\delta > 4-2m$, then $4-2m-\delta < 0$. If $h\in
	\mathrm{Ker}\,\mathcal{D}^*\mathcal{D}$ and $h\in
	C^{4,\alpha}_\gamma(Bl_0\mathbf{C}^m)$ for some $\gamma < 0$
	then we must have $h=0$ (for the proof see~\cite{AP06}). This
	implies that our operator is surjective.

	When $m=2$ then the same argument shows that the image of
	$\mathcal{D}^*_\eta\mathcal{D}_\eta$ when $\delta\in(3-2m,4-2m)$
	has codimension 1, and more precisely the image is the
	subspace of functions with integral zero. It follows that the
	operator (\ref{eq:op2}) is surjective.
\end{proof}

\subsection{Weighted spaces on $Bl_pM$}
We will need to do analysis on the blown-up manifold $Bl_pM$ endowed
with the approximately extremal metric $\omega_\epsilon$ we constructed
in Section~\ref{sec:mainargument}. For this
we define the following weighted spaces, which are simply glued versions
of the above weighted spaces on $M\setminus\{p\}$ and
$Bl_p\mathbf{C}^m$.

We define the weighted H\"older norms $C^{k,\alpha}_\delta$ by
\[ \Vert f\Vert_{C^{k,\alpha}_\delta} = \Vert
f\Vert_{C^{k,\alpha}_{\omega}(M\setminus B_1)} + \sup_{\epsilon\leqslant
r\leqslant 1/2} r^{-\delta}\Vert
f\Vert_{C^{k,\alpha}_{r^{-2}\omega_\epsilon}(B_{2r}\setminus B_r)} +
\epsilon^{-\delta}\Vert f\Vert_{C^{k,\alpha}_{\eta}(B_\epsilon)}. \]

The subscripts indicate the metrics used to compute the relevant norm.
This is a glued version of the two spaces defined in the previous
section
in the following sense. If $f\in C^{k,\alpha}(Bl_pM)$ and we think of $Bl_pM$
as a gluing of $M\setminus\{p\}$ and $Bl_0\mathbf{C}^m$ then $\gamma_1f$
and $\gamma_2f$ can naturally be thought of as functions on
$M\setminus\{p\}$ and $Bl_0\mathbf{C}^m$ respectively. Then the norm
$\Vert f\Vert_{C^{k,\alpha}_\delta(Bl_pM)}$ is comparable to
\[ \Vert \gamma_1f\Vert_{C^{k,\alpha}_\delta(M_p)} + 
\epsilon^{-\delta}\Vert\gamma_2f\Vert_{C^{k,\alpha}_\delta
(Bl_0\mathbf{C}^m,\eta)}.\]

Another way to think about the norm is that if $\Vert
f\Vert_{C^{k,\alpha}_\delta}\leqslant c$ then $f$ is in
$C^{k,\alpha}(Bl_pM)$ and also for $i\leqslant k$ we have
\[ \begin{gathered}
	|\nabla^i f| \leqslant c\,\,\text{ for } r\geqslant 1\\
	|\nabla^i f| \leqslant cr^{\delta-i}\,\,\text{ for } \epsilon
	\leqslant r \leqslant 1\\
	|\nabla^i f| \leqslant c\epsilon^{\delta-i}
	\,\,\text{ for } r\leqslant\epsilon.
\end{gathered}\]
The norms here are computed with respect to the metric
$\omega_\epsilon$, and note that on $B_\epsilon$ we have
$\omega_\epsilon=\epsilon^2\eta$.

Sometimes we will restrict this norm to subsets such as
$C^{k,\alpha}_\delta(M\setminus B_{r_\epsilon})$ and
$C^{k,\alpha}_\delta(B_{2r_\epsilon})$. 
A crucial property of these weighted norms is that
\begin{equation}\label{eq:cutoff}
	\Vert \gamma_i\Vert_{C^{4,\alpha}_0} \leqslant c 
\end{equation}
for some constant $c$ independent of $\epsilon$.

In addition we need
the following lemma about lifting elements of
$\overline{\mathfrak{h}}\subset C^\infty(M)$ to 
$C^\infty(Bl_pM)$ according to Definition~\ref{def:lifting}.

\begin{lem}\label{lem:lb}
	For any $f\in\overline{\mathfrak{h}}$ its lifting satisfies
\[ \Vert \l(f) \Vert_{C^{0,\alpha}_0} \leqslant c|f|\]
and also $|X_{\l(f)}|_{\omega_\epsilon} \leqslant c|f|$
for some constant $c$ independent of $\epsilon$.
Here $|\cdot|$ is any fixed norm on $\overline{\mathfrak{h}}$.
\end{lem}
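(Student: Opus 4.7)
The plan is to exploit the direct sum decomposition $\overline{\mathfrak{h}} = \overline{\mathfrak{t}}\oplus\mathfrak{h}'$ from Definition~\ref{def:lifting} and handle each piece separately, since $\overline{\mathfrak{h}}$ is finite-dimensional and $\l$ is linear. On any fixed norm $|\cdot|$ of $\overline{\mathfrak{h}}$, we only need to verify the two bounds with a constant $c$ depending on a choice of basis, and everything reduces to checking the claim on each basis vector.

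For $f\in\mathfrak{h}'$, we have $\l(f)=\gamma_1 f$ globally (extending by $f$ outside the chart), and $f$ vanishes at $p$ by the choice of $\mathfrak{h}'$, so $|f|\leqslant Cr|f|$ in $B_1$ while $|df|_\omega$ is uniformly bounded. Combining with the cutoff bound $\Vert\gamma_1\Vert_{C^{4,\alpha}_0}\leqslant c$ from \eqref{eq:cutoff} and the fact that $\gamma_1\equiv 0$ on $B_{r_\epsilon}\supset B_\epsilon$, one immediately sees $\Vert\gamma_1 f\Vert_{C^{0,\alpha}_0}\leqslant c|f|$. For the Hamiltonian vector field $X_{\l(f)}$ of $\gamma_1 f$ with respect to $\omega_\epsilon$, it suffices to bound $|d\l(f)|_{\omega_\epsilon}=|X_{\l(f)}|_{\omega_\epsilon}$; the contribution $\gamma_1\,df$ is controlled directly, while the dangerous term $f\,d\gamma_1$ is controlled because $|d\gamma_1|_{\omega_\epsilon}\lesssim r_\epsilon^{-1}$ is compensated by $|f|\lesssim r_\epsilon$ on the support of $d\gamma_1$.

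For $f\in\overline{\mathfrak{t}}$, the vector field $X_f$ vanishes at $p$, so it pulls back under the blowdown to a holomorphic vector field $\tilde X_f$ on $Bl_pM$, generating the lifted $T$-action. Since $\omega_\epsilon$ is $T$-invariant, $\tilde X_f$ is Killing and admits a Hamiltonian, which we normalise to be $f$ outside $B_1$. The main technical step is to bound $|\tilde X_f|_{\omega_\epsilon}$ uniformly in $\epsilon$. On $M\setminus B_\epsilon$ the metric $\omega_\epsilon$ is comparable to $\omega$ and $|X_f|_\omega \leqslant C|f|$. Inside $B_\epsilon$, writing $w=z/\epsilon$ and using $\omega_\epsilon=\epsilon^2\eta$, the linear part $Az\,\partial_z$ of $X_f$ becomes $Aw\,\partial_w$ in $w$-coordinates with bounded $\eta$-length for $|w|\leqslant 1$; the scaling $|X|_{\epsilon^2\eta}=\epsilon|X|_\eta$ therefore gives $|\tilde X_f|_{\omega_\epsilon}=O(\epsilon)$ on the exceptional divisor, hence a uniform bound. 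Consequently $|d\l(f)|_{\omega_\epsilon}\leqslant c|f|$, and a short computation comparing $\omega_\epsilon$-distances (bounded from $\partial B_1$ all the way to $E$, since the region $B_\epsilon$ has $\omega_\epsilon$-diameter $O(\epsilon)$) gives the sup bound $|\l(f)|\leqslant c|f|$. The weighted Hölder bound then follows because, on each dyadic annulus $B_{2r}\setminus B_r$ with $\epsilon\leqslant r\leqslant 1/2$, the rescaling $r^{-2}\omega_\epsilon$ has unit diameter and $|d\l(f)|_{r^{-2}\omega_\epsilon}=r|d\l(f)|_{\omega_\epsilon}\leqslant c|f|$; the analogous statement on $B_\epsilon$ with the metric $\eta$ is immediate from the above, completing the verification. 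The only point that requires genuine care is the scaling behaviour of the lifted vector field near $E$, where the linear part of $X_f$ and the scaling relation between $\omega_\epsilon$ and $\eta$ conspire to cancel any $\epsilon^{-1}$ factor.
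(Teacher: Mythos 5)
Your proposal is correct and follows essentially the same approach as the paper: split $\overline{\mathfrak{h}}=\overline{\mathfrak{t}}\oplus\mathfrak{h}'$, handle $\mathfrak{h}'$ by the cutoff estimate using the vanishing of $f$ at $p$ (the paper phrases this via the weighted multiplication property $\Vert\gamma_1 f\Vert_{C^{1,\alpha}_1}\leqslant c|f|$, you phrase it by estimating $\gamma_1\,df$ and $f\,d\gamma_1$ directly — equivalent), and handle $\overline{\mathfrak{t}}$ by observing that the linear vector field $X_f$ and $\omega_\epsilon$ scale compatibly near $E$, so that $|X_{\l(f)}|_{\omega_\epsilon}$ stays uniformly bounded. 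Your more explicit write-out of the scaling $w=z/\epsilon$ near the exceptional divisor and of the bounded $\omega_\epsilon$-diameter to recover the sup bound from the gradient bound fills in details the paper leaves implicit, but the argument is the same.
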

\begin{proof}
	Recall that we defined the lifting in
	Definition~\ref{def:lifting} using a decomposition
	$\overline{\mathfrak{h}}=\overline{\mathfrak{t}}
	\oplus\mathfrak{h}'$, where the
	functions in $\mathfrak{h}'$ vanish at $p$. Suppose
	first that $f\in\mathfrak{h}'$. Since $f$ vanishes at $p$, we
	have
	\[ \Vert f\Vert_{C^{1,\alpha}_1(M_p)}\leqslant c|f|,\]
	where $c$ is independent of $f$. It follows from the
	multiplication properties of weighted spaces and
	(\ref{eq:cutoff}) that
	\[ \Vert \l(f)\Vert_{C^{1,\alpha}_1} \leqslant c|f|,\]
	from which the required inequalities follow.

	Now suppose that $f\in\overline{\mathfrak{t}}$, 
	and write $X_f$ for the
	holomorphic vector field on $M$ corresponding to $f$. On the
	ball $B_{r_\epsilon}\subset M$,
	the action of $X_f$ is given by unitary transformations, and the
	size of the lifting to $B_{r_\epsilon}\subset Bl_pM$ is
	determined by the size of $X_f$ on $\partial B_{r_\epsilon}$.
	Outside $B_{r_\epsilon}$ the vector field is unchanged and the
	metrics $\omega$ and $\omega_\epsilon$ are uniformly equivalent.
	From these observations we can check that 
	$|X_{\l(f)}|_{\omega_\epsilon}\leqslant c|f|$ for some
	constant $c$. This in turn bounds $\nabla\l(f)$, from which
	the bound on $\Vert\l(f)\Vert_{C^{0,\alpha}_0}$ follows.
\end{proof}

\subsection{The linearized operator}
We now want to start studying the linearized operator (\ref{eq:linop}). 
The constants that appear below will be independent of
$\epsilon$ unless the dependence is made explicit. 

Recall that for any metric $\tilde\omega$ we defined
\[ L_{\tilde{\omega}}(\phi) = \Delta_{\tilde{\omega}}^2 \phi +
\mathrm{Ric}(\tilde{\omega})^{i\bar j} \phi_{i\bar j}.\]
We want to first study how this varies as we change the
metric.
For this we have

\begin{prop}\label{prop:linear}
	Suppose $\delta < 0$. 
	There exist constants $c_0, C>0$ such that if 
	$\Vert\phi\Vert_{C^{4,\alpha}_2} < c_0$
	then  	
	\[ \Vert L_{\omega_\phi}(f) -
	L_{\omega_\epsilon}(f)\Vert_{C^{0,\alpha}_{\delta-4}} \leqslant
	C\Vert\phi\Vert_{C^{4,\alpha}_2}\Vert
	f\Vert_{C^{4,\alpha}_\delta},\]
	where $\omega_\phi=\omega_\epsilon + \ddbar\phi$.
\end{prop}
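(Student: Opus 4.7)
The plan is to expand $L_{\omega_\phi}(f) - L_{\omega_\epsilon}(f)$ as a fourth-order linear differential operator in $f$ whose coefficients are smooth functions of the derivatives of $\phi$ up to order four and vanish when $\phi=0$, then to bound each term using the multiplicative property of weighted H\"older spaces.

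First I would expand the Lichnerowicz operator in local coordinates. From $L_{\tilde\omega}(f) = \Delta^2_{\tilde\omega}f + \mathrm{Ric}(\tilde\omega)^{i\bar j}f_{i\bar j}$ and the usual expansion of $\Delta^2$, one gets schematically
\[
L_{\tilde\omega}(f) = \tilde g^{i\bar j}\tilde g^{k\bar l}f_{i\bar j k\bar l} + P_3(\tilde g^{-1},\partial\tilde g)\cdot\nabla^3 f + P_2(\tilde g^{-1},\partial\tilde g,\partial^2\tilde g)\cdot\nabla^2 f,
\]
where $P_2,P_3$ are rational in $\tilde g^{-1}$ and polynomial in its derivatives (the Ricci term contributes through $\partial^2\log\det\tilde g$, which is captured by $P_2$). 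Substituting $\tilde g = g_\epsilon + \partial\bar\partial\phi$ and writing
\[
L_{\omega_\phi} - L_{\omega_\epsilon} = \int_0^1 \frac{d}{dt}L_{\omega_{t\phi}}\,dt
\]
produces an expression of the form
\[
L_{\omega_\phi}(f) - L_{\omega_\epsilon}(f) = \sum_{k=2}^{4} A_k(\phi)\cdot\nabla^k f,
\]
where $A_k(\phi)$ is smooth in the derivatives of $\phi$ through order $6-k$, is linear in these derivatives to leading order, and vanishes at $\phi=0$. The smallness assumption $\Vert\phi\Vert_{C^{4,\alpha}_2}\leqslant c_0$ keeps $\partial\bar\partial\phi$ uniformly small (it has weight zero), so $g_\epsilon+\partial\bar\partial\phi$ stays uniformly positive definite, the rational expressions in the inverse metric are smooth with uniform bounds, and the Taylor remainders grow linearly in $\Vert\phi\Vert_{C^{4,\alpha}_2}$ with constants depending only on $c_0$.

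The weight arithmetic then falls out immediately. For $\phi\in C^{4,\alpha}_2$ we have $\nabla^j\phi\in C^{4-j,\alpha}_{2-j}$, hence $A_k(\phi)\in C^{0,\alpha}_{k-4}$ with norm bounded by $C\Vert\phi\Vert_{C^{4,\alpha}_2}$, while for $f\in C^{4,\alpha}_\delta$ we have $\nabla^k f\in C^{0,\alpha}_{\delta-k}$. The multiplicative property $C^{0,\alpha}_\mu\cdot C^{0,\alpha}_\nu\hookrightarrow C^{0,\alpha}_{\mu+\nu}$ of weighted H\"older spaces then places each product $A_k(\phi)\cdot\nabla^k f$ in $C^{0,\alpha}_{\delta-4}$ with a bilinear bound of the claimed form; summing over $k=2,3,4$ finishes the estimate.

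The main thing to verify, and the principal technical point, is that all constants above are independent of $\epsilon$. This is precisely the raison d'\^etre of the weighted norms on $Bl_pM$: on $M\setminus B_1$ they are computed with respect to the fixed metric $\omega$, on the inner ball $B_\epsilon$ with respect to $\omega_\epsilon=\epsilon^2\eta$ (which rescales to the fixed Burns--Simanca geometry on $(Bl_0\mathbf{C}^m,\eta)$), and on the intermediate annulus with respect to the scale-invariant rescalings $r^{-2}\omega_\epsilon$. Consequently the multiplication property and the smoothness bounds on $A_k(\phi)$ reduce to $\epsilon$-independent estimates on $(M_p,\omega)$ and $(Bl_0\mathbf{C}^m,\eta)$ separately, yielding constants uniform in $\epsilon$ as required.
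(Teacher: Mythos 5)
Your proposal is correct and takes essentially the same route as the paper's proof: write $L_{\omega_\phi}-L_{\omega_\epsilon}$ schematically as a fourth-order operator whose coefficients are differences of rational expressions in the metric, its inverse, and their derivatives; use smallness of $\Vert\phi\Vert_{C^{4,\alpha}_2}$ to keep the metrics uniformly equivalent; and close the estimate with the multiplicativity of weighted H\"older spaces. The only stylistic difference is that you package the difference via $\int_0^1\frac{d}{dt}L_{\omega_{t\phi}}\,dt$ and the coefficient decomposition $\sum_k A_k(\phi)\cdot\nabla^k f$, whereas the paper proceeds by direct algebraic identities such as $g_\phi^{-1}-g^{-1}=g_\phi^{-1}(g-g_\phi)g^{-1}$ and treats the $\Delta^2$ and curvature contributions separately; both bookkeepings are equivalent and your weight arithmetic ($A_k\in C^{0,\alpha}_{k-4}$, $\nabla^k f\in C^{0,\alpha}_{\delta-k}$, product in $C^{0,\alpha}_{\delta-4}$) matches the paper's.
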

\begin{proof}
	In the proof $c$ will denote a constant that may change from
	line to line, but is always independent of $\epsilon$.
	Let us write $g$, $g_\phi$ and for the Riemannian metrics
	corresponding to $\omega_\epsilon$ and $\omega_\phi$. 
	We can first choose $c_0$ small enough so that 
	$\frac{1}{2}g < g_\phi < 2g$, so the metrics are
	uniformly equivalent. We also have 
	\[\begin{aligned}
		\Vert g_\phi\Vert_{C^{2,\alpha}_0} & \leqslant c(1 +
		\Vert \phi\Vert_{C^{4,\alpha}_2}) \leqslant c,\\
	\end{aligned}
	\]
	where the norms are always computed with respect to
	$\omega_\epsilon$ and a set of coordinate charts obtained from a
	fixed set of charts on $M$ and on $Bl_0\mathbf{C}^m$.
	Schematically we have
	\[ \begin{gathered}
		\partial(g_{\phi}^{-1}) =
		g_\phi^{-2}\partial g_\phi \\ 
		\partial^2(g_\phi^{-1}) =
		g_\phi^{-3}\partial g_\phi\partial g_\phi +
		g_\phi^{-2}\partial^2
		g_\phi
	\end{gathered} \] 
	which implies, using our previous statements, that
	$\Vert g_\phi^{-1}\Vert_{C^{2,\alpha}_0}\leqslant c$.
	Since
	\[ g_\phi^{-1}-g^{-1} = g_\phi^{-1}(g - g_\phi)g^{-1}, \]
	we get 
	\[ \Vert g_\phi^{-1}-g^{-1}\Vert_{C^{2,\alpha}_0} \leqslant
	c \Vert\phi\Vert_{C^{4,\alpha}_2} .\]
	From this we can control
	$\Delta^2_{g_\phi}- \Delta^2_g$, since schematically
	\[ \begin{gathered}
		\Delta^2_{g_\phi}f - \Delta^2_g f =
		g_\phi^{-1}\partial^2(g_{\phi}^{-1}\partial^2f) -
		g^{-1}\partial^2(g^{-1}\partial^2f) \\
		= (g_\phi^{-1} - g^{-1})\partial^2(g_{\phi}^{-1}
		\partial^2f) + g^{-1}\partial^2\big[ (g_\phi^{-1} -
		g^{-1})\partial^2f\big],
	\end{gathered}\]
	from which we get
	\[ \begin{aligned}
		\Vert \Delta^2_{g_\phi}f - \Delta^2_{g}f\Vert_{
		C^{0,\alpha}_{\delta-4}}  \leqslant &\, \Vert g_\phi^{-1} -
		g^{-1}\Vert_{C^{0,\alpha}_0} \Vert 
		g_{\phi}^{-1}\Vert_{C^{2,\alpha}_0} \Vert\partial^2
		f\Vert_{C^{2,\alpha}_{\delta-2}} \\
		& + \Vert g^{-1}\Vert_{C^{0,\alpha}_0} \Vert
		g_\phi^{-1} - g^{-1}\Vert_{C^{2,\alpha}_0} \Vert
		\partial^2
		f\Vert_{C^{2,\alpha}_{\delta-2}} \\
		\leqslant &\,
		c\Vert\phi\Vert_{
		C^{4,\alpha}_2} \Vert
		f\Vert_{C^{4,\alpha}_\delta}.
	\end{aligned}\]
	For the terms involving the
	curvature, we first note that $\Vert
	\mathrm{Riem}(g)\Vert_{C^{k,\alpha}_{-2}}\leqslant c$ 
	for some constant
	independent of $\epsilon$. In addition 
	\[
	\Vert\mathrm{Riem}(g_\phi)-\mathrm{Riem}(g)\Vert_{C^{0,\alpha}_{-2}}
	\leqslant
	c\Vert\phi\Vert_{C^{4,\alpha}_2}.\]
	This follows from the schematic
	\[ \mathrm{Riem}(g) = \partial\Gamma + \Gamma\star\Gamma,\]
	where $\Gamma = g^{-1}\partial g$. In addition
	\[ \Gamma_\phi - \Gamma = (g_\phi^{-1} - g^{-1})\partial g_\phi
	+ g^{-1}(\partial g_\phi - \partial g),\]
	which implies that
	\[ \Vert \Gamma_\phi - \Gamma\Vert_{C^{1,\alpha}_{-1}} \leqslant
	c\Vert\phi\Vert_{C^{4,\alpha}_2},\]
	and so with similar calculations we get the required
	result for the curvature. 
\end{proof}

This result will have several useful consequences. 
First it allows us to estimate the nonlinear operator
$Q_{\omega_\epsilon}$ in the formula
\begin{equation}\label{eq:Q}
	\mathbf{s}(\omega_\epsilon + \ddbar\phi) =
\mathbf{s}(\omega_\epsilon) + L_{\omega_\epsilon}(\phi) +
Q_{\omega_\epsilon}(\phi).
\end{equation}

\begin{lem}\label{lem:contract}
Suppose that $\delta < 0$. There exist $c_0,C >
0$ such that if 
\[ \Vert \phi\Vert_{C^{4,\alpha}_2}, \Vert \psi
\Vert_{C^{4,\alpha}_2} \leqslant c_0, \]
then 
\[ \Vert Q_{\omega_\epsilon}(\phi) -
Q_{\omega_\epsilon}(\psi)\Vert_{C^{0,\alpha}_{\delta-4}} \leqslant
C\big\{\Vert\phi\Vert_{C^{4,\alpha}_2} + \Vert\psi\Vert_{C^{
4,\alpha}_2}\big\}\Vert \phi-\psi\Vert_{C^{4,\alpha}_\delta}. \]
\end{lem}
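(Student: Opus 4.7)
The plan is to derive the estimate from Proposition~\ref{prop:linear} together with an integration along the segment from $\psi$ to $\phi$. By definition, $Q_{\omega_\epsilon}(\phi)=\mathbf{s}(\omega_\epsilon+\ddbar\phi)-\mathbf{s}(\omega_\epsilon)-L_{\omega_\epsilon}(\phi)$, and the derivative of $\mathbf{s}(\omega_\epsilon+\ddbar u)$ with respect to $u$ in the direction $v$ is $L_{\omega_\epsilon+\ddbar u}(v)$. So, writing $\phi_t=\psi+t(\phi-\psi)$, the fundamental theorem of calculus gives
\begin{equation*}
Q_{\omega_\epsilon}(\phi)-Q_{\omega_\epsilon}(\psi) = \int_0^1\bigl[L_{\omega_{\phi_t}}(\phi-\psi) - L_{\omega_\epsilon}(\phi-\psi)\bigr]\,dt,
\end{equation*}
since the $\mathbf{s}(\omega_\epsilon)$ terms cancel and the linear term in $L_{\omega_\epsilon}$ also integrates out exactly.

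Next I would apply Proposition~\ref{prop:linear} with the potential $\phi_t$ in place of $\phi$. For this the hypothesis $\Vert\phi_t\Vert_{C^{4,\alpha}_2}<c_0$ must hold for all $t\in[0,1]$; by convexity this follows from $\Vert\phi\Vert_{C^{4,\alpha}_2},\Vert\psi\Vert_{C^{4,\alpha}_2}\leqslant c_0/2$, so I shrink the $c_0$ in the statement accordingly. Proposition~\ref{prop:linear} then yields, for each $t$,
\begin{equation*}
\bigl\Vert L_{\omega_{\phi_t}}(\phi-\psi) - L_{\omega_\epsilon}(\phi-\psi)\bigr\Vert_{C^{0,\alpha}_{\delta-4}}
\leqslant C\,\Vert\phi_t\Vert_{C^{4,\alpha}_2}\,\Vert\phi-\psi\Vert_{C^{4,\alpha}_\delta}.
\end{equation*}

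Since $\Vert\phi_t\Vert_{C^{4,\alpha}_2}\leqslant \Vert\psi\Vert_{C^{4,\alpha}_2}+\Vert\phi\Vert_{C^{4,\alpha}_2}$ uniformly in $t$, integrating the previous inequality over $t\in[0,1]$ and using the triangle inequality for the integral yields
\begin{equation*}
\Vert Q_{\omega_\epsilon}(\phi)-Q_{\omega_\epsilon}(\psi)\Vert_{C^{0,\alpha}_{\delta-4}}
\leqslant C\bigl\{\Vert\phi\Vert_{C^{4,\alpha}_2}+\Vert\psi\Vert_{C^{4,\alpha}_2}\bigr\}\Vert\phi-\psi\Vert_{C^{4,\alpha}_\delta},
\end{equation*}
which is the desired estimate. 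The only subtle point is ensuring that the constant $C$ from Proposition~\ref{prop:linear} is indeed independent of $\epsilon$ and of $t$, but this is precisely what that proposition provides, so there is no genuine obstacle here---the lemma is a direct corollary of Proposition~\ref{prop:linear} via the mean value trick.
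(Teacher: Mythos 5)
Your proof is correct and follows essentially the same route as the paper: both reduce to Proposition~\ref{prop:linear} by comparing $L_{\omega_{\phi_t}}$ (or $L_{\omega_\chi}$) to $L_{\omega_\epsilon}$ along the segment from $\psi$ to $\phi$. The paper states it via the mean value theorem (producing a single intermediate $\chi$), while you use the integral form $Q_{\omega_\epsilon}(\phi)-Q_{\omega_\epsilon}(\psi)=\int_0^1[L_{\omega_{\phi_t}}-L_{\omega_\epsilon}](\phi-\psi)\,dt$, which is in fact the cleaner way to justify the step for a Banach-space-valued map; this is a cosmetic difference, not a different argument. One small simplification: you do not need to shrink $c_0$ to $c_0/2$ --- since $\phi_t$ is a convex combination of $\phi$ and $\psi$, the bound $\Vert\phi_t\Vert_{C^{4,\alpha}_2}\leqslant\max\{\Vert\phi\Vert_{C^{4,\alpha}_2},\Vert\psi\Vert_{C^{4,\alpha}_2}\}\leqslant c_0$ already holds for all $t\in[0,1]$.
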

\begin{proof}
By the mean value theorem there exists some $\chi$, which is a convex
combination of $\phi$ and $\psi$, such that
\[ Q_{\omega_\epsilon}(\phi) - Q_{\omega_\epsilon}(\psi) =
DQ_{\omega_\epsilon,\chi}( \phi-\psi).\]
From Equation~\ref{eq:Q} we see that
$DQ_{\omega_\epsilon, \chi} =
L_{\omega_{\chi}}-L_{\omega_\epsilon}$,
so if $c_0$ is sufficiently small, then
from the previous proposition we get
\[ \Vert
DQ_{\omega_\epsilon,\chi}(\phi-\psi)\Vert_{C^{0,\alpha}_{\delta -4}}
\leqslant C\Vert\chi\Vert_{C^{4,\alpha}_2}
\Vert\phi-\psi\Vert_{C^{4,\alpha}_\delta}.\]
But $\Vert\chi\Vert_{C^{4,\alpha}_2}\leqslant
\Vert\phi\Vert_{C^{4,\alpha}_2} + \Vert\psi\Vert_{C^{4,\alpha}_2}$ 
so the required inequality holds.
\end{proof}

Next we want to study the invertibility of the linearized operator
(\ref{eq:linop}) of
our problem on $Bl_pM$. Let us write $X=\nabla\l(s)$, where
$\l(s)$ is the lift to $Bl_pM$ of the scalar curvature
$\mathbf{s}(\omega)$. 

\begin{prop}\label{prop:inv}
	For sufficiently small $\epsilon$ and $\delta\in(4-2m,0)$
	the operator 
	\[ \begin{gathered}
		G : (C^{4,\alpha}_\delta)^T \times
		\overline{\mathfrak{h}} \to
		(C^{0,\alpha}_{\delta-4})^T \\ 
		(\phi, f) \mapsto L_\omega(\phi) - \frac{1}{2}X(\phi)
		- \l(f)
	\end{gathered}\]
	has a right inverse $P$, with bounded operator norm $\Vert P\Vert <
	C$ for some constant $C$ independent of $\epsilon$.

	When $m=2$ and we choose $\delta=4-2m-\theta$ for
	$\theta > 0$ small,
	then we obtain a right inverse $P$ with $\Vert
	P\Vert < C\epsilon^{-\theta}$. 
\end{prop}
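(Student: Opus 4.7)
The plan is to build an approximate right inverse $Q_\epsilon$ by patching together the right inverses of Propositions~\ref{prop:Mp} and~\ref{prop:BlC}, and then to upgrade to an exact right inverse by a Neumann series argument. The underlying observation is that away from the gluing annulus $B_{2r_\epsilon}\setminus B_{r_\epsilon}$ the operator $L_{\omega_\epsilon} - \tfrac{1}{2}X$ simplifies: outside $B_{2r_\epsilon}$ one has $\omega_\epsilon = \omega$ and $\l(s) = \mathbf{s}(\omega)$, so by the Lichnerowicz identity~\eqref{eq:Lichn} the operator equals $\mathcal{D}^*_\omega\mathcal{D}_\omega$; inside $B_{r_\epsilon}$ one has $\omega_\epsilon = \epsilon^2\eta$, and $\l(s)$ is essentially constant there, so after rescaling the operator is close to $\epsilon^{-2}\mathcal{D}^*_\eta\mathcal{D}_\eta$.

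Given $\psi\in(C^{0,\alpha}_{\delta-4})^T$, I would produce $Q_\epsilon\psi = (\phi,f)$ as follows. First decompose $\psi = \gamma_1\psi + \gamma_2\psi$ so that $\gamma_1\psi$ vanishes near $p$ and $\gamma_2\psi$ vanishes outside $B_{2r_\epsilon}$. View $\gamma_1\psi$ as a compactly supported function on $M_p$ and apply Proposition~\ref{prop:Mp} to obtain $(\phi_{\mathrm{out}},f)\in C^{4,\alpha}_\delta(M_p)^T\times\overline{\mathfrak{h}}$ with $\mathcal{D}^*_\omega\mathcal{D}_\omega(\phi_{\mathrm{out}}) - f = \gamma_1\psi$. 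Similarly rescale $\gamma_2\psi$ to a compactly supported function on $Bl_0\mathbf{C}^m$ (with the $\epsilon^4$ factor from the rescaling of $\mathcal{D}^*\mathcal{D}$) and apply Proposition~\ref{prop:BlC} to obtain $\phi_{\mathrm{in}}\in C^{4,\alpha}_\delta(Bl_0\mathbf{C}^m)^T$. Pulling $\phi_{\mathrm{in}}$ back to a neighborhood of the exceptional divisor, define $\phi = \tilde\gamma_1\phi_{\mathrm{out}} + \tilde\gamma_2\phi_{\mathrm{in}}$ using slightly modified cutoffs $\tilde\gamma_i$.

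The key computation is then to show that $G\circ Q_\epsilon = I+E_\epsilon$ with $\|E_\epsilon\|\leqslant C\epsilon^\kappa$ for some $\kappa>0$. The error is supported in the gluing annulus and has three sources: the commutators $[L_{\omega_\epsilon}-\tfrac12 X,\tilde\gamma_i]$ acting on $\phi_{\mathrm{out}},\phi_{\mathrm{in}}$; the discrepancy between $L_{\omega_\epsilon}-\tfrac12 X$ and the limiting operators $\mathcal{D}^*_\omega\mathcal{D}_\omega$, $\epsilon^{-2}\mathcal{D}^*_\eta\mathcal{D}_\eta$ in the transition region (controlled using Proposition~\ref{prop:linear} and the explicit asymptotics of $\psi$ from Section~\ref{sec:BurnsSimanca}); and the difference between $\l(f)$ and $f$, which for $f\in\mathfrak{h}'$ is $(\gamma_1-1)f$ near $p$ and for $f\in\overline{\mathfrak{t}}$ measures the $\omega$- versus $\omega_\epsilon$-Hamiltonian lifts, both estimable via Lemma~\ref{lem:lb}. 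Using the decay $|\nabla^i\phi_{\mathrm{out}}|\leqslant Cr^{\delta-i}$ together with the analogous bound for $\phi_{\mathrm{in}}$ after rescaling, each contribution carries a positive power of $r_\epsilon$ or $\epsilon/r_\epsilon$, and choosing $r_\epsilon=\epsilon^\tau$ for a suitable $\tau\in(0,1)$ balances them and yields the desired estimate. Then $\|E_\epsilon\|<1/2$ for small $\epsilon$ and $P_\epsilon = Q_\epsilon(I+E_\epsilon)^{-1}$ is the required right inverse with uniformly bounded norm.

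The main obstacle is the careful bookkeeping of weights through the rescaling that relates functions on $Bl_pM$ to functions on $Bl_0\mathbf{C}^m$, and the verification that the three error contributions in the gluing annulus actually combine to give a genuinely positive power $\kappa$ of $\epsilon$ (the exponent of $r_\epsilon$ in each term depends on $\delta$ and must remain positive for some choice of $\tau$). For $m\geqslant 3$ the range $\delta\in(4-2m,0)$ is nonempty and the construction runs cleanly. For $m=2$ the admissible ranges from Propositions~\ref{prop:Mp} and~\ref{prop:BlC} no longer overlap; to compensate I would absorb the one-dimensional cokernel appearing in~\eqref{eq:op2} into the constant functions inside $\overline{\mathfrak{h}}$ via the lift $\l$. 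Taking $\delta = 4-2m-\theta$ introduces a mismatch in the factor $\epsilon^{-\delta}$ appearing in the definition of the $C^{k,\alpha}_\delta$-norm on $B_\epsilon\subset Bl_pM$, and tracking this mismatch through the construction of $Q_\epsilon$ accounts for the stated bound $\|P\|\leqslant C\epsilon^{-\theta}$.
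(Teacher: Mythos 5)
Your overall plan---patch together the right inverses from Propositions~\ref{prop:Mp} and~\ref{prop:BlC}, show the composed operator is close to the identity, and invert by a Neumann series---is the same as the paper's, but there is a genuine gap in the error estimate. The claim that ``each contribution carries a positive power of $r_\epsilon$ or $\epsilon/r_\epsilon$'' fails for the cutoff-commutator term. Write the commutator error schematically as a third-order operator applied to $\nabla\tilde\gamma_1 \star \phi_{\mathrm{out}}$. If $\tilde\gamma_1$ is a standard cutoff on the annulus $B_{2r_\epsilon}\setminus B_{r_\epsilon}$, then $\Vert\nabla\tilde\gamma_1\Vert_{C^{3,\alpha}_{-1}}$ is of order~$1$, independently of $\epsilon$: the derivative scales like $r_\epsilon^{-1}$, which is exactly what the weight $-1$ absorbs. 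Meanwhile $\Vert D^3\phi_{\mathrm{out}}\Vert_{C^{0,\alpha}_{\delta-3}}$ is controlled by $\Vert\phi_{\mathrm{out}}\Vert_{C^{4,\alpha}_\delta}$ with no extra decay. By the multiplication rule for weighted norms the product lands in $C^{0,\alpha}_{\delta-4}$ with a bound of order~$1$, not $o(1)$. No choice of $\tau$ in $r_\epsilon=\epsilon^\tau$ produces a gain here---the weights have been engineered precisely so that they balance on every dyadic annulus.

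The paper's remedy is a separate family of \emph{logarithmic} cutoffs $\beta_1,\beta_2$, defined by $\beta_1(z)=\beta(\log|z|/\log\epsilon)$, which satisfy $\Vert\nabla\beta_i\Vert_{C^{3,\alpha}_{-1}}\leqslant c/|\log\epsilon|$, together with the compatibility condition that $\beta_i\equiv 1$ on the support of $\gamma_i$ (so that the term $\beta_i\gamma_i\phi$ returns $\gamma_i\phi$ exactly). With these one gets a commutator error of size $O(1/|\log\epsilon|)$---small, but \emph{not} a power of $\epsilon$---which is enough for the Neumann argument. Your proposal does not introduce this logarithmic trick; ``slightly modified cutoffs'' is not enough, and the asserted power gain is false. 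You also do not impose the $\beta_i\equiv 1$ on $\mathrm{supp}\,\gamma_i$ condition, which is needed so that after composing $G\circ Q_\epsilon$ the exact terms $\gamma_1\phi + \gamma_2\phi$ reassemble to $\phi$ rather than leaving behind a further $O(1)$ error. Finally, your sketch of the $m=2$ case is directionally right but leaves out the mechanism: the paper inverts the augmented operator $G_0$ with the extra $\mathbf{R}$ factor, then restricts to zero-mean right-hand sides, and finally reabsorbs the mean into $f\in\overline{\mathfrak{h}}$ at the cost of a factor $\epsilon^{\delta-(4-2m)}=\epsilon^{-\theta}$, which is where the loss in the operator norm actually comes from.
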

\begin{proof}
	This follows a standard argument for gluing solutions of linear
	problems, by first constructing an approximate inverse. See for
	example Chapter 7 in Donaldson-Kronheimer~\cite{DK90}.

	We will use the cutoff functions $\gamma_1,\gamma_2$ from
	before, where $\gamma_1+\gamma_2=1$, the function $\gamma_1$ is
	supported on $M\setminus B_{r_\epsilon}$, $\nabla\gamma_1$ is
	supported on $B_{2r_\epsilon}\setminus B_{r_\epsilon}$ and 
	\[ \Vert \gamma_i\Vert_{C^{4,\alpha}_0}\leqslant c.\]
	We will also need a cutoff
	function $\beta_1$ which is equal to 1 on the support of
	$\gamma_1$, such that $\nabla\beta_1$ is supported on a set
	slightly smaller than
	$B_{r_\epsilon}\setminus B_\epsilon$ and 
	$\beta_1=0$ on $B_\epsilon$. We will
	later choose $a < 1$ such that
	$r_\epsilon=\epsilon^a$, and for now let $\overline{a}$
	be such that $a < \overline{a} < 1$. Then we can define 
	\[ \beta_1(z) = \beta\left(\frac{\log |z|}{\log\epsilon}\right),
	\]
	where $\beta : \mathbf{R}\to\mathbf{R}$ is a fixed cutoff
	function such that $\beta(r)=1$ for $r<a$ and $\beta(r)=0$
	for $r > \overline{a}$.  
	The key point is that with this definition
	\[ \Vert \nabla\beta_1\Vert_{C^{3,\alpha}_{-1}} \leqslant
	\frac{c}{|\log\epsilon|},\]
	and also the support of $\nabla\beta_1$ is in $B_{r_\epsilon}
	\setminus B_{\epsilon^{\overline{a}}}$. 

	Similarly we define $\beta_2$ so that $\beta_2=1$ on the support
	of $\gamma_2$, but we want the support of $\nabla\beta_2$ to be
	slightly smaller than $B_1\setminus B_{2r_\epsilon}$. Namely
	we want $\nabla\beta_2$ to be supported on
	$B_{2\epsilon^{\underline{a}}}
	\setminus B_{2r_\epsilon}$, where $0<\underline{a}<a$. 
	Again we can define
	\[ \beta_2(z) = \tilde{\beta}\left(\frac{\log|z|/2}{\log
	r_\epsilon}\right),\]
	where $\tilde{\beta}:\mathbf{R}\to\mathbf{R}$ is a cutoff
	function such that $\beta(r)=0$ for $r < \underline{a}/a$ 
	and $\beta(r)=1$
	for $r > 1$. Once again, we obtain
	\[ \Vert \nabla \beta_2\Vert_{C^{3,\alpha}_{-1}}\leqslant
	\frac{c}{|\log r_\epsilon|}\leqslant \frac{c'}{|\log
	\epsilon|},\]
	for sufficiently small $\epsilon$.

	Let $\phi\in (C^{0,\alpha}_{\delta-4})^T$.
	The function $\gamma_1\phi$ can be 
	thought of as being defined on $M_p$. Since
	$\Vert\gamma_1\Vert_{C^{0,\alpha}_0}\leqslant c$ and the metrics 
	$\omega_\epsilon$ and $\omega$ are uniformly equivalent, we have 
	\[ \Vert \gamma_1\phi\Vert_{C^{0,\alpha}_{\delta-4}(M_p)} \leqslant
	c\Vert \phi\Vert_{C^{0,\alpha}_{\delta-4}}.\]
	It follows from Proposition~\ref{prop:Mp}
	that there exists some $f\in \overline{\mathfrak{h}}$ and
	$P_1(\gamma_1\phi)$ with
	\begin{equation}\label{eq:b1}
		\Vert
		P_1(\gamma_1\phi)\Vert_{C^{4,\alpha}_{\delta}(M_p)} +|f| 
		\leqslant
	        c\Vert\phi\Vert_{C^{0,\alpha}_{\delta -4}}
	\end{equation}
	for which
	\begin{equation}\label{eq:Mp}
		L_{\omega}P_1(\gamma_1\phi)
		-\frac{1}{2}\nabla\mathbf{s}(\omega)\cdot\nabla
		P_1(\gamma_1\phi)- f = \gamma_1\phi. 
	\end{equation}

	Similarly the 
	function $\gamma_2\phi$ can be thought of as a function
	on $Bl_0\mathbf{C}^m$, and from the definition of our norm we
	have
	\[ \Vert
	\gamma_2\phi\Vert_{C^{0,\alpha}_{\delta-4}(Bl_0\mathbf{C}^m)}
	\leqslant c\epsilon^{\delta-4}\Vert\phi\Vert_{C^{0,\alpha}_{
	\delta-4}}.\]
	From Proposition~\ref{prop:BlC}
	we have some $P_2(\gamma_2\phi)$
        with 
	\begin{equation}\label{eq:b2}
		\Vert
        	P_2(\gamma_2\phi)\Vert_{ 
		C^{4,\alpha}_{\delta}(Bl_0\mathbf{C}^m,\eta)}\leqslant
        	c\Vert\epsilon^4\gamma_2\phi\Vert_{
		C^{0,\alpha}_{\delta-4}(Bl_0\mathbf{C}^m,\eta)} \leqslant
		c\epsilon^\delta\Vert\phi\Vert_{C^{0,\alpha}_{\delta-4}},
	\end{equation}
        for which
	\[ L_{\eta}P_2(\gamma_2\phi) = 
		\epsilon^4\gamma_2\phi,\]
        so we also have
        \[ L_{\epsilon^2\eta}(P_2(\gamma_2\phi)) = \gamma_2\phi.\]
	We can think of $\beta_2P_2(\gamma_2\phi)$ as a function on
	$Bl_pM$ or on $Bl_0\mathbf{C}^m$. 
	
	We then define
	\[ P(\phi) = \beta_1P_1(\gamma_1\phi) +
	\beta_2P_2(\gamma_2\phi),\]
	where we are thinking of the annulus $B_1\setminus B_\epsilon$
	as a subset of $M_p$, $Bl_0\mathbf{C}^m$ and $Bl_pM$ at the same
	time. The bounds (\ref{eq:b1}) and (\ref{eq:b2}) imply that
	\begin{equation} \label{eq:b3}
		\Vert P(\phi)\Vert_{C^{4,\alpha}_\delta} \leqslant
		c\Vert\phi\Vert_{C^{0,\alpha}_{\delta-4}}.
	\end{equation}
	We want to show that the
	operator $\phi\mapsto (P(\phi),f)$ gives
	an approximate inverse to the operator $G$.
	\begin{claim}
		For sufficiently small $\epsilon$ we have
		\[ \Big\Vert L_{\omega_\epsilon}
		(P\phi) - \frac{1}{2}X(P\phi) -
		\l(f)  -
		\phi\Big\Vert_{C^{0,\alpha}_{\delta-4}} \leqslant
		\frac{1}{2} \Vert\phi\Vert_{C^{0,\alpha}_{\delta-4}}. \]
	\end{claim}
	To prove this note that we can write the expression we want to
	estimate as
	\[ \begin{gathered}
		L_{\omega_\epsilon}(\beta_1P_1(\gamma_1\phi))) -
		\frac{1}{2}X(\beta_1P_1(\gamma_1\phi)) -
		\gamma_1\l(f) - \gamma_1\phi \\
		+ L_{\omega_\epsilon}(\beta_2P_2(\gamma_2\phi)) -
		\frac{1}{2}X(\beta_2P_2(\gamma_2\phi)) -
		\gamma_2\l(f) - \gamma_2\phi,
	\end{gathered}\]
	where the terms on the top row are all supported in $M\setminus
	B_{\epsilon^{\overline{a}}}$ and the terms on the bottom
	row are supported in $B_{2\epsilon^{\underline{a}}}$.

	We first deal with $M\setminus
	B_{\epsilon^{\overline{a}}}$. 
	On this set we have
	$\omega_\epsilon=\omega+\ddbar\rho$, where
	\[ \rho(z) = \gamma_2(|z|)(-\phi(z) + 
		\epsilon^2\psi(\epsilon^{-1}z)).\]
	It follows that on the complement of $B_{\epsilon^{\overline{a}}}$ 
	we have
	\[ \Vert \rho\Vert_{C^{4,\alpha}_2(M\setminus B_{\epsilon^{
	\overline{a}}})}
	\leqslant c(r_\epsilon^2 + \epsilon^{(2m-2)(1-
	\overline{a})})=o(1),\]
	where by $o(1)$ we mean a constant going to zero as $\epsilon\to
	0$. 
	By the argument in Proposition~\ref{prop:linear}
	this implies that on the
	complement of $B_{\epsilon^{\overline{a}}}$ we have
	\[ \Vert L_{\omega_\epsilon}-L_{\omega}\Vert = o(1).\]
	At the same time $\mathbf{s}(\omega)-\l(s)$ is
	supported on $B_{2r_\epsilon}$ and is
	bounded in $C^{1,\alpha}_0$ by Lemma~\ref{lem:lb}. It follows
	from this that
	\begin{equation}\label{eq:Xgrads}
		\Vert (X - \nabla\mathbf{s}(\omega))\phi
		\Vert_{C^{0,\alpha}_{\delta-4}} \leqslant
		cr_\epsilon^2\Vert
		\phi\Vert_{C^{4,\alpha}_\delta}.
	\end{equation}
	Similarly, inside $B_{2\epsilon^{\underline{a}}}$ we have
	\[ \Big\Vert L_{\omega} + \frac{1}{2}X - L_{\epsilon^2\eta}
	\Big\Vert =
	o(1).\]
	Therefore it remains to show that for sufficiently small
	$\epsilon$ we have
	\begin{equation}\label{ineq:1}
		\Vert L_{\omega}(\beta_1P_1(\gamma_1\phi)) - \frac{1}{2}
		\nabla\mathbf{s}(\omega)\cdot\nabla
		(\beta_1P_1(\gamma_1\phi)) - \gamma_1\l(f) - \gamma_1\phi
		\Vert_{C^{0,\alpha}_{\delta-4}(M\setminus B_\epsilon)} <
		\frac{1}{4}\Vert\phi\Vert_{C^{0,\alpha}_{\delta-4}},
	\end{equation}
	and
	\[ \Vert L_{\epsilon^2\eta}(\beta_2P_2(\gamma_2\phi)) -
	\gamma_2\l(f)  -
	\gamma_2\phi\Vert_{C^{0,\alpha}_{\delta-4}(B_1)} < \frac{1}{4}
	\Vert\phi\Vert_{C^{0,\alpha}_{\delta-4}}.\]
	For the first inequality note, using Equation~(\ref{eq:Mp})
	that 
        \[ \begin{aligned}
              &L_{\omega}(\beta_1P_1(\gamma_1\phi)) - \frac{1}{2}
	      \nabla\mathbf{s}(\omega)\cdot\nabla
	      (\beta_1P_1(\gamma_1\phi)) - \gamma_1\l(f) -
              \gamma_1\phi \\
              = &\beta_1\gamma_1\phi + \beta_1f - \gamma_1\l(f) -
	       \gamma_1\phi + D^3(\nabla\beta_1\star
              P_1(\gamma_1\phi)) \\
	      = &\beta_1f - \gamma_1\l(f) + D^3(\nabla\beta_1\star
              P_1(\gamma_1\phi)), 
              \end{aligned}\]
          where $D^3$ denotes a 3rd order differential operator with
	  the coefficient of $\nabla^i$ bounded in $C^{4,\alpha}_{i-3}(M_p)$ 
	  and $\star$ is a bilinear algebraic operator. Since
          $\beta_1f - \gamma_1\l(f)$ is supported in
          $B_{2r_\epsilon}$ and is bounded in $C^{0,\alpha}_0$ by
          $c|f|$, we get
          \[ \Vert \beta_1f -
          \gamma_1\l(f)\Vert_{C^{0,\alpha}_{\delta-4}} \leqslant
          cr_\epsilon^{4-\delta}\Vert
          \phi\Vert_{C^{4,\alpha}_\delta}.\]
          Finally we have
          \[ \Vert D^3(\nabla\beta_1\star
          P_1(\gamma_1\phi))\Vert_{C^{0,\alpha}_{ \delta-4}} \leqslant
          c\Vert \nabla\beta_1\Vert_{C^{3,\alpha}_{-1}} \Vert
          P_1(\gamma_1 \phi)\Vert_{C^{3,\alpha}_{\delta}} = o(1)
          \Vert\phi\Vert_{C^{0,\alpha}_{\delta-4}},\]
	  so for small enough $\epsilon$ the Inequality~\ref{ineq:1}
	  holds.

          The proof of the second inequality is similar, just note that
          $\gamma_2\l(f)$ is supported in $B_{2r_\epsilon}$ so
          \[
          \Vert\gamma_2\l(f)\Vert_{C^{0,\alpha}_{\delta-4}(B_1)}
          \leqslant
          c r_\epsilon^{4-\delta}\Vert\phi\Vert_{C^{4,\alpha}_\delta}.
          \]

	  This proves the claim, so if we write 
	  $\tilde{P}(\phi) = (P\phi,f)$, this means that the
	  operator norm $\Vert G\circ\tilde{P}-I\Vert 
	  \leqslant \frac{1}{2}$,
	  which implies that we have a uniformly bounded inverse
	  $(G\circ\tilde{P})^{-1}$. This in turn shows that $G$ has a
	  right inverse $\tilde{P}\circ(G\circ\tilde{P})^{-1}$ whose
	  norm is bounded independently of $\epsilon$, 
	  which is what we wanted. 

	  When $m=2$ then we first work with the operator
	  \[\begin{gathered}
		  G_0 :
		  (C^{4,\alpha}_\delta)^T\times\overline{\mathfrak{h}}_0 
		  \times\mathbf{R}\to (C^{0,\alpha}_{\delta-4})^T \\
		  (\phi,f,t)\mapsto
		  \mathcal{D}^*_\omega\mathcal{D}_\omega(\phi) -
		  \l(f) + t\chi,
	  \end{gathered}\]
	  where $\chi$ is the function from Proposition~\ref{prop:BlC}. 
	  We 
	  are thinking of $\chi$ as a function on $Bl_pM$ using
	  the identification of $B_\epsilon\subset Bl_pM$ with
	  $B_1\subset Bl_0\mathbf{C}^m$ and in addition $\overline{
	  \mathfrak{h}}_0$
	  denotes the functions $f\in\overline{\mathfrak{h}}$ 
	  for which the lifting
	  $\l(f)$
	  has zero mean. The same argument as above can be used to show
	  that if $\epsilon$ is small then 
	  $G_0$ is invertible, with the inverse bounded
	  independently of $\epsilon$. Since
	  $\mathcal{D}^*\mathcal{D}(\phi)$ and $\l(f)$ have zero
	  mean, this implies that the operator
	  \[\begin{gathered}
		  G_1 :
		  (C^{4,\alpha}_\delta)^T\times\overline{\mathfrak{h}}_0
		  \to (C^{0,\alpha}_{\delta-4})^T_0 \\
		  (\phi,f)\mapsto
		  \mathcal{D}^*_\omega\mathcal{D}_\omega(\phi) -
		  \l(f)
	  \end{gathered}\]
	  also has bounded inverse, where
	  $(C^{0,\alpha}_{\delta-4})^T_0$ consist of functions with zero
	  mean. Now note that if $\delta < 4-2m$ then
	  \[ \Vert \phi\Vert_{L^1} \leqslant
	  c\epsilon^{\delta-(4-2m)}\Vert
	  \phi\Vert_{C^{0,\alpha}_{\delta-4}}.\]
	  It follows, by applying $G_1^{-1}$ to $\phi-\overline{\phi}$
	  and then absorbing the mean value $\overline{\phi}$ into $f$, that
	  the operator 
       	  \[\begin{gathered}
		  G_2 :
		  (C^{4,\alpha}_\delta)^T\times\overline{\mathfrak{h}} 
		  \to (C^{0,\alpha}_{\delta-4})^T \\
		  (\phi,f)\mapsto
		  \mathcal{D}^*_\omega\mathcal{D}_\omega(\phi) -
		  \l(f)
	  \end{gathered}\]
	  also has bounded inverse, but we only get $\Vert G_2^{-1}\Vert
	  < C\epsilon^{\delta-(4-2m)}=C\epsilon^{-\theta}$. Finally we
	  can compare $G_2$ with the operator $G$ in the statement of
	  the proposition using (\ref{eq:Xgrads}), and we find that $G$
	  is also invertible with the same bound, if $\theta$ is small
	  enough so that $r_\epsilon^2\ll \epsilon^\theta$.
\end{proof}

\subsection{The nonlinear equation}
We are now ready to solve the equation of Proposition~\ref{prop:gluing}
(see also Equation (\ref{eq:2})),
ie. we want to find a $T$-invariant function $u\in C^\infty(Bl_pM)$
and $f\in\overline{\mathfrak{h}}$ satisfying
\[ \mathbf{s}(\omega_\epsilon + \ddbar u) -
\frac{1}{2}\nabla\l(f + s)
\cdot\nabla u = \l(f+s).\]
Recall that here $\l(f)$ and $\l(s)$ are our lifts of the
functions $f,\mathbf{s}(\omega)\in \overline{\mathfrak{h}}$ 
to the blowup $Bl_pM$,
defined in Definition~\ref{def:lifting}.
As before we will write $X=\nabla\l(s)$. 
Let us write the equation as
\begin{equation}\label{eq:nonlinear}
 L_{\omega_\epsilon}(u) - \frac{1}{2}X(u) - \l(f) =
 \l(s) - \mathbf{s}(\omega_\epsilon) - Q_{\omega_\epsilon}(u) +
\frac{1}{2}\nabla\l(f)\cdot\nabla u.
\end{equation}

Following \cite{APS06} we first modify
$\omega$ on $M\setminus\{p\}$ so that it matches up with the 
Burns-Simanca metric to higher order. For this
let $\Gamma$ be a $T$-invariant solution of the linear equation
\begin{equation}\label{eq:Gamma}
	\mathcal{D}^*_\omega \mathcal{D}_\omega\Gamma = h\qquad
	\text{on } M\setminus\{p\}
\end{equation}
for some $h\in\overline{\mathfrak{h}}$, such that $\Gamma$ has an expansion
\[ \Gamma(z) = -|z|^{4-2m} + \tilde{\Gamma}, \]
where $\tilde{\Gamma} = O(|z|^{5-2m})$ for $m>2$ and $\Gamma$ has
leading term $\log|z|$ when $m=2$. It follows from this expansion
that $\Gamma$ is a distributional solution of
\[ \mathcal{D}^*_\omega\mathcal{D}_\omega \Gamma = h - c_m\delta_p,\]
where $c_m>0$ is a constant depending on the dimension and $\delta_p$ is
the delta function at $p$. We then find that for all
$g\in\overline{\mathfrak{h}}$
we have 
\[ \int_M gh\omega^m = c_m g(p),\]
so $h=c_m\mu(p)+\lambda$, where $\lambda=\mathrm{Vol}(M)^{-1}c_m$ is a
constant. 

Define the metric $\tilde{\omega}=\omega+\ddbar\Gamma$ on 
$M\setminus\{p\}$, so 
\[ \tilde{\omega} = \ddbar\left(\frac{|z|^2}{2} + \epsilon^{2m-2}\Gamma(z) +
\phi(z)\right),\]
where recall that $\omega = \ddbar(|z|^2/2 + \phi(z))$ near $p$. 
We can then write 
\[ \tilde{\omega} = \ddbar\left( \frac{|z|^2}{2} - \epsilon^{2m-2}
|z|^{4-2m} + \epsilon^{2m-2}\tilde{\Gamma}(z) + \phi(z)\right)\]
when $m>2$. 
At the same recall that the Burns-Simanca metric $\eta$ has an
expansion
\[ \eta = \ddbar\left(\frac{|z|^2}{2} - |z|^{4-2m} +
  \tilde{\psi}(z)\right),\]
where $\tilde{\psi} = O(|z|^{3-2m})$ for large $z$. We define the metric
$\tilde{\omega}_\epsilon$ by gluing
$\tilde{\omega}$ to $\epsilon^2\eta$ as before, to get on the annulus
$B_{2r_\epsilon}\setminus B_{r_\epsilon}$
\begin{equation}\label{eq:annulus}
 \begin{gathered}
	\tilde{\omega_\epsilon} = \ddbar\left( \frac{|z|^2}{2} -
 \epsilon^{2m-2}|z|^{4-2m} +
 \gamma_1(|z|)\left[\epsilon^{2m-2}\tilde{\Gamma}(z) +
 \phi(z)\right]\right. \\
 + \gamma_2(|z|) \epsilon^2\tilde{\psi}(\epsilon^{-1}
 z)\Big).
\end{gathered}
\end{equation}
Moreover outside $B_{2r_\epsilon}$ we have
$\tilde{\omega_\epsilon}=\tilde{\omega}$ while 
inside $B_{r_\epsilon}$ we have
$\tilde{\omega_\epsilon}=\epsilon^2\eta$. 
Note that in terms of our previous approximate metric
$\omega_\epsilon$ 
\begin{equation} \label{eq:newmetric}
	\tilde{\omega}_\epsilon = \omega_\epsilon +
	\ddbar\left[\epsilon^{2m-2}
	\gamma_1(|z|) \Gamma(z)\right].
\end{equation}
If $m=2$ then we can glue $\tilde{\omega}$ to $\epsilon^2\eta$ in the
same way.

We want to find a solution to Equation~\ref{eq:nonlinear} as a
perturbation of $\tilde{\omega}_\epsilon$ so we write
\begin{equation}\label{eq:expand}
\begin{gathered}
  u = \epsilon^{2m-2}\gamma_1\Gamma + v \\
    f = \epsilon^{2m-2}h + g.
\end{gathered}
\end{equation}
Substituting this into Equation~\ref{eq:nonlinear} and rearranging we get
\[ \begin{aligned}
L_{\omega_\epsilon}(v) - \frac{1}{2}X(v)-
\l(g) =& \l(s) - \mathbf{s}(\omega_\epsilon) -
Q_{\omega_\epsilon}(u) + \frac{1}{2}\nabla\l(f)\cdot \nabla u
\\
& -L_{\omega_\epsilon}(\epsilon^{2m-2}\gamma_1\Gamma) + \frac{1}{2}
X(\epsilon^{2m-2} \gamma_1\Gamma) + \epsilon^{2m-2}\l(h).
\end{aligned}\]
We can write this as a fixed point problem
\[ (v, g) = \mathcal{N}(v,g) \]
where we use the inverse $P$ constructed in Propostion~\ref{prop:inv}
and
\[\begin{aligned}
	\mathcal{N}(v,g) = & P\bigg\{  \l(s) - \mathbf{s}
	(\omega_\epsilon) -
	Q_{\omega_\epsilon}(\epsilon^{2m-2}\gamma_1\Gamma + v)\\
	&\quad + \frac{1}{2}\nabla(\epsilon^{2m-2}\l(h)+\l(g))\cdot
	\nabla\left(\epsilon^{2m-2}\gamma_1\Gamma + v\right) \\
&\quad 
 -L_{\omega_\epsilon}(\epsilon^{2m-2}\gamma_1\Gamma) + \frac{1}{2}
 X(\epsilon^{2m-2} \gamma_1\Gamma) + \epsilon^{2m-2}\l(h)\bigg\}
\end{aligned}\]

We first show that
\[ \mathcal{N} : (C^{4,\alpha}_\delta)^T\times\overline{\mathfrak{h}} 
\to (C^{4,\alpha}_\delta)^T\times\overline{\mathfrak{h}} \]
is a contraction on a small ball.
\begin{lem}\label{lem:contract2}
	There exist constants $c_0, \epsilon_0 > 0$ such that for
	$\epsilon < \epsilon_0$ the operator $\mathcal{N}$ is a
	contraction on the set
	\[ \{(v,g)\quad :\quad \Vert v\Vert_{C^{4,\alpha}_2}, 
		|g| < c_0 \}\]
	with constant $1/2$.
\end{lem}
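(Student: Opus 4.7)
The plan is to apply the contraction mapping theorem by verifying both that $\mathcal{N}(0,0)$ is small in the relevant norm and that $\mathcal{N}$ is Lipschitz with small constant on the ball of radius $c_0$. Write the output space $(C^{4,\alpha}_\delta)^T \times \overline{\mathfrak{h}}$ with its natural product norm. Since $P$ is a bounded right-inverse of $G$ (with norm $O(1)$ when $m>2$ and $O(\epsilon^{-\theta})$ when $m=2$) by Proposition~\ref{prop:inv}, it suffices to control the expression inside the braces defining $\mathcal{N}$ in $(C^{0,\alpha}_{\delta-4})^T$.

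First I would estimate $\mathcal{N}(0,0)$. Rewriting the bracketed expression using the identity $\mathbf{s}(\tilde{\omega}_\epsilon) = \mathbf{s}(\omega_\epsilon) + L_{\omega_\epsilon}(\epsilon^{2m-2}\gamma_1\Gamma) + Q_{\omega_\epsilon}(\epsilon^{2m-2}\gamma_1\Gamma)$ (from (\ref{eq:newmetric}) and (\ref{eq:Q})) collapses most of the terms into
\[
\l(s) - \mathbf{s}(\tilde{\omega}_\epsilon) + \epsilon^{2m-2}\l(h) + \tfrac{1}{2}X(\epsilon^{2m-2}\gamma_1\Gamma) + \tfrac{\epsilon^{4m-4}}{2}\nabla\l(h)\cdot\nabla(\gamma_1\Gamma).
\]
On the outer region $M\setminus B_{2r_\epsilon}$ (where $\gamma_1\equiv 1$ and $\omega_\epsilon=\omega$) the relation $\mathcal{D}^*_\omega\mathcal{D}_\omega\Gamma = h$, i.e.\ $L_\omega\Gamma = h + \tfrac{1}{2}\nabla\mathbf{s}(\omega)\cdot\nabla\Gamma$, turns this into $O(\epsilon^{4m-4})$ quadratic-in-$\Gamma$ errors plus a lower-order drift involving $\nabla\mathbf{s}(\omega)$; on the inner region $B_{r_\epsilon}$ (where $\tilde{\omega}_\epsilon=\epsilon^2\eta$ is scalar flat and $\gamma_1=0$) the scalar curvature vanishes and $\l(s)+\epsilon^{2m-2}\l(h)$ is controlled by $r^2$ terms in our weighted norm. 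The dominant contribution comes from the gluing annulus $B_{2r_\epsilon}\setminus B_{r_\epsilon}$, where the cutoff $\gamma_1$ brings in derivatives of $\phi(z)$ and $\tilde\psi(\epsilon^{-1}z)$; from the expansion (\ref{eq:annulus}) and the asymptotics in Section~\ref{sec:BurnsSimanca}, each error term is bounded in $C^{0,\alpha}_{\delta-4}$ by a positive power of $\epsilon$ provided $r_\epsilon=\epsilon^a$ is chosen with $a$ in an appropriate range (for instance so that $r_\epsilon^{4-\delta}$ and $\epsilon^{(2m-2)(1-a)+\text{extra}}$ are both $\ll 1$). This yields $\Vert\mathcal{N}(0,0)\Vert \leq C\epsilon^{\kappa'}$ for some $\kappa'>0$, which is smaller than $c_0/2$ for $\epsilon$ small.

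Next I would establish the Lipschitz bound. The difference $\mathcal{N}(v_1,g_1)-\mathcal{N}(v_2,g_2)$ splits into a $Q_{\omega_\epsilon}$-difference and a gradient-coupling difference. For the first, Lemma~\ref{lem:contract} applied with $\phi=\epsilon^{2m-2}\gamma_1\Gamma + v_1$ and $\psi=\epsilon^{2m-2}\gamma_1\Gamma+v_2$ gives
\[
\Vert Q_{\omega_\epsilon}(\phi)-Q_{\omega_\epsilon}(\psi)\Vert_{C^{0,\alpha}_{\delta-4}} \leq C(\Vert\phi\Vert_{C^{4,\alpha}_2}+\Vert\psi\Vert_{C^{4,\alpha}_2})\Vert v_1-v_2\Vert_{C^{4,\alpha}_\delta},
\]
and since $\Vert\epsilon^{2m-2}\gamma_1\Gamma\Vert_{C^{4,\alpha}_2}=o(1)$ by the asymptotic of $\Gamma$ together with (\ref{eq:cutoff}), the prefactor can be made as small as we like. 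For the gradient-coupling difference, Lemma~\ref{lem:lb} bounds $\Vert\l(g_i)\Vert_{C^{0,\alpha}_0}$ and $|X_{\l(g_i)}|_{\omega_\epsilon}$ linearly in $|g_i|$, and the product/composition estimates in weighted H\"older spaces then yield bounds of the form $C(\epsilon^{2m-2}+|g_1|+|g_2|+\Vert v_1\Vert+\Vert v_2\Vert)$ times $(\Vert v_1-v_2\Vert_{C^{4,\alpha}_\delta}+|g_1-g_2|)$ on the $(C^{0,\alpha}_{\delta-4})$-norm before applying $P$. Combining these estimates with the bound on $\Vert P\Vert$ from Proposition~\ref{prop:inv} and choosing $c_0$ and $\epsilon_0$ small (and, when $m=2$, ensuring $\theta$ is taken so that $\epsilon^{-\theta}$ is absorbed by the other small factors), we obtain a Lipschitz constant $\leq 1/2$.

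The main obstacle is balancing the various powers of $\epsilon$ carefully: the error $\mathcal{N}(0,0)$ involves positive powers gained from the smallness of $\epsilon^{2m-2}\Gamma$ and the cutoff derivatives, while $P$ contributes $\epsilon^{-\theta}$ in dimension two. Thus one must choose the gluing radius $r_\epsilon=\epsilon^a$ and, for $m=2$, the weight offset $\theta$ so that every individual contribution to $\mathcal{N}(0,0)$ and to the Lipschitz constant beats $\Vert P\Vert$; once this bookkeeping is done the contraction mapping principle produces the required fixed point, and the asserted expansion (\ref{eq:expansion}) for $f$ follows from (\ref{eq:expand}) together with $h=c_m\mu(p)+\lambda$.
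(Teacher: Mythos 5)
Your Lipschitz estimate is essentially the paper's proof: the same splitting of $\mathcal{N}(v_1,g_1)-\mathcal{N}(v_2,g_2)$ into the $Q_{\omega_\epsilon}$-difference (handled via Lemma~\ref{lem:contract} together with $\Vert\epsilon^{2m-2}\gamma_1\Gamma\Vert_{C^{4,\alpha}_2}=o(1)$) and the gradient-coupling difference (handled via Lemma~\ref{lem:lb} and product estimates), then absorbing $\Vert P\Vert$ by shrinking $c_0$, $\epsilon_0$ and, for $m=2$, $\theta$. The first half of your proposal, the estimate of $\mathcal{N}(0,0)$, is not needed for this lemma --- ``contraction with constant $1/2$'' on the given set is only a Lipschitz statement --- and in the paper that estimate is the separate Lemma~\ref{lem:F}, which is combined with the present lemma only afterwards to run the fixed-point argument on a smaller, $\epsilon$-dependent ball $S$ rather than on the full $c_0$-ball.
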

\begin{proof}
        Suppose $m>2$. 
	Since $P$ is bounded independently of $\epsilon$, we need to
        control
        \[ Q_{\omega_\epsilon}(u_1) -
        \frac{1}{2}\nabla\l(f_1)\cdot\nabla u_1 -
        Q_{\omega_\epsilon}(u_2) +
        \frac{1}{2}\nabla\l(f_2)\cdot\nabla u_2,\]
	where
	\[ \begin{gathered}
		u_i = \epsilon^{2m-2}\gamma_1\Gamma + v_i \\
		f_i = \epsilon^{2m-2}h + g_i.
	\end{gathered}\]
	First note that
	\[
		\Vert\epsilon^{2m-2}\Gamma\Vert_{C^{k,\alpha}_2}
		\leqslant c(\epsilon r_\epsilon^{-1})^{2m-2} = o(1)
	\]
	as $\epsilon\to 0$. Hence for any $\lambda > 0$ we can choose
	sufficiently small $c_0$ and
	$\epsilon$ for which Lemma~\ref{lem:contract} implies that
        \[ \Vert Q_{\omega_\epsilon}(u_1) - Q_{\omega_\epsilon}(u_2)
        \Vert_{C^{0,\alpha}_{\delta -4}} \leqslant \lambda \Vert
        u_1-u_2\Vert_{C^{4,\alpha}_\delta} = \lambda \Vert v_1-v_2
	\Vert_{C^{4,\alpha}_\delta}.\]
        On the other hand we have
        \[ \begin{aligned}
          \Vert\nabla \l(f_1)\cdot \nabla u_1 &- \nabla \l(f_2)
	  \cdot\nabla u_2\Vert_{C^{0,\alpha}_{\delta-4}} \leqslant \\
          &\leqslant \Vert\nabla \l(f_1)\cdot (\nabla u_1 -
          \nabla u_2)\Vert_{C^{0,\alpha}_{\delta-4}} + 
          \Vert(\nabla \l(f_1) -
          \nabla \l(f_2))\cdot\nabla u_2\Vert_{C^{0,\alpha}_{\delta-4}}
          \\
          &\leqslant
          \Vert\nabla \l(f_1)\Vert_{C^{0,\alpha}_{-3}}
	  \Vert u_1-u_2\Vert_{C^{4,\alpha}_{\delta}}
          + \Vert u_2\Vert_{C^{4,\alpha}_{\delta}}\Vert
          \nabla \l(f_1-f_2)\Vert_{C^{0, \alpha}_{-3}} \\
          &\leqslant c(|f_1|\cdot\Vert u_1-u_2\Vert_{C^{4,\alpha}_\delta} +
          \Vert u_2\Vert_{C^{4,\alpha}_{\delta}} |f_1-f_2|),
          \end{aligned}\]
	  where we used Lemma~\ref{lem:lb}. 
          From this it is clear that by choosing $c_0$ and $\epsilon$
          sufficiently small, $\mathcal{N}$ is a contraction with
          constant $1/2$. 

	  If $m=2$ then $P$ is not bounded independently of $\epsilon$, 
	  but if we choose $\delta <
	  4-2m$ very close to $4-2m$ then the bound only blows up slowly
	  as $\epsilon\to0$ and the same argument works. 
\end{proof}

Next we need to bound $\mathcal{N}(0,0)$, which is the same as
estimating $\Vert F\Vert_{C^{0,\alpha}_{\delta-4}}$ where $F$ is the function
\begin{equation}\label{eq:F}
\begin{gathered}
	F= \l(s) - \mathbf{s}(\omega_\epsilon) -
Q_{\omega_\epsilon}(\epsilon^{2m-2}\gamma_1\Gamma) + 
\frac{1}{2}\nabla\epsilon^{2m-2}\l(h)\cdot \nabla\epsilon^{2m-2}
\gamma_1\Gamma
\\
 -L_{\omega_\epsilon}(\epsilon^{2m-2}\gamma_1\Gamma) + \frac{1}{2}
X(\epsilon^{2m-2} \gamma_1\Gamma) + \epsilon^{2m-2}\l(h).
\end{gathered}
\end{equation}  
\begin{lem}\label{lem:F}
	Choose $\delta$ very close to $4-2m$ with $\delta > 4-2m$ for
	$m>2$ and $\delta < 4-2m$ for $m=2$.  Let $r_\epsilon =
	\epsilon^{\frac{2m-1}{2m+1}}.$ Then 
	we have the estimate
	\[ \Vert F\Vert_{C^{0,\alpha}_{\delta-4}} \leqslant
	cr_\epsilon^{4-\delta},\]
	where $F$ is defined by Equation~(\ref{eq:F}).
\end{lem}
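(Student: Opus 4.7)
The plan is to decompose $Bl_pM$ into three regions matching the piecewise construction of $\tilde\omega_\epsilon$: the outer region $M\setminus B_{2r_\epsilon}$ on which $\tilde\omega_\epsilon = \omega + \ddbar(\epsilon^{2m-2}\Gamma)$, the inner region $B_{r_\epsilon}$ on which $\gamma_1\equiv 0$ and $\tilde\omega_\epsilon=\epsilon^2\eta$, and the gluing annulus $A_\epsilon=B_{2r_\epsilon}\setminus B_{r_\epsilon}$. On each piece I would estimate the pointwise size of $F$ and verify that its contribution to the $C^{0,\alpha}_{\delta-4}$ norm is $\leqslant cr_\epsilon^{4-\delta}$. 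Before splitting, it is convenient to use the identity (\ref{eq:scal}), applied to $\tilde\omega_\epsilon=\omega_\epsilon+\ddbar(\epsilon^{2m-2}\gamma_1\Gamma)$, to rewrite
\[ F = \l(s) - \mathbf{s}(\tilde\omega_\epsilon) + \tfrac{1}{2}X(\epsilon^{2m-2}\gamma_1\Gamma) + \epsilon^{2m-2}\l(h) + \tfrac{1}{2}\nabla(\epsilon^{2m-2}\l(h))\cdot\nabla(\epsilon^{2m-2}\gamma_1\Gamma). \]

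On the outer region all cutoff-related complications drop out: $\l(s)=\mathbf{s}(\omega)$, $\l(h)=h$, $X=\nabla\mathbf{s}(\omega)$, and the defining equation (\ref{eq:Gamma}) rewritten via the Lichnerowicz identity (\ref{eq:Lichn}) cancels the three linear-in-$\Gamma$ pieces of $F$. What remains is the quadratic remainder $-Q_\omega(\epsilon^{2m-2}\Gamma)$, which Lemma~\ref{lem:contract} combined with the expansion $\Gamma=-|z|^{4-2m}+O(|z|^{5-2m})$ bounds by $c\epsilon^{4m-4}|z|^{4-4m}$, plus the cross term $\tfrac{1}{2}\epsilon^{4m-4}\nabla h\cdot\nabla\Gamma$. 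Inserted into the weighted norm, both terms contribute at worst $O(\epsilon^{4m-4}r_\epsilon^{4-2m})$ to the $C^{0,\alpha}_{\delta-4}$ norm, which is dominated by $r_\epsilon^{4-\delta}$ as soon as $\epsilon\leqslant cr_\epsilon$.

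On the inner region all $\gamma_1$-terms vanish and $\mathbf{s}(\epsilon^2\eta)=0$, leaving $F=\l(s)+\epsilon^{2m-2}\l(h)$, uniformly bounded by Lemma~\ref{lem:lb}. The weighted-norm contribution on $B_{r_\epsilon}\setminus B_\epsilon$ is $\sup_{r\leqslant r_\epsilon}r^{4-\delta}\cdot c\leqslant cr_\epsilon^{4-\delta}$, and on $B_\epsilon$ it is $\epsilon^{4-\delta}\cdot c\leqslant cr_\epsilon^{4-\delta}$ since $\epsilon<r_\epsilon$ and $4-\delta>0$.

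The main obstacle, and the step that pins down the exponent in $r_\epsilon$, is the estimate on the annulus $A_\epsilon$. Writing $\tilde\omega_\epsilon$ via the potential (\ref{eq:annulus}), the piece $|z|^2/2-\epsilon^{2m-2}|z|^{4-2m}$ reproduces the leading Burns-Simanca potential and is essentially scalar flat, so the true error in $\mathbf{s}(\tilde\omega_\epsilon)$ comes only from the cutoff-localized tails $\gamma_1(\epsilon^{2m-2}\tilde\Gamma+\phi)$ and $\gamma_2\epsilon^2\tilde\psi(\epsilon^{-1}z)$. Using $\tilde\Gamma=O(|z|^{5-2m})$ and $\tilde\psi(w)=O(|w|^{3-2m})$, together with the bound (\ref{eq:cutoff}) on the cutoffs and Proposition~\ref{prop:linear} to control the linearization around the Burns-Simanca background, one obtains pointwise bounds on $|F|$ of order $\epsilon^{2m-1}r_\epsilon^{-1-2m}$ from the $\tilde\psi$-tail and $\epsilon^{2m-2}r_\epsilon^{1-2m}$ from the $\tilde\Gamma$-tail. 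Requiring the first to remain bounded fixes $r_\epsilon=\epsilon^{(2m-1)/(2m+1)}$, and a direct check shows that at this value the second is strictly smaller for $m\geqslant 2$; the annulus contribution to the weighted norm is then $r_\epsilon^{4-\delta}\cdot O(1)$, matching the claimed bound. For $m=2$ the leading term of $\Gamma$ is $\log|z|$ rather than a power, but the same argument goes through with logarithmic factors and the same exponent $(2m-1)/(2m+1)=3/5$.
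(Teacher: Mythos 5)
Your proposal follows essentially the same approach as the paper's proof: the same three-region decomposition, the same cancellation on the outer region via the defining equation for $\Gamma$ and the Lichnerowicz identity, the same use of scalar-flatness of $\eta$ on the inner region, and on the annulus the same key observation that $\Delta_0^2(|z|^{4-2m})=0$ so that only $\phi$, $\epsilon^{2m-2}\tilde\Gamma$ and $\epsilon^2\tilde\psi(\epsilon^{-1}z)$ contribute to the linearized scalar curvature at the flat model. Your exponent bookkeeping on the annulus ($\epsilon^{2m-1}r_\epsilon^{-1-2m}$ from the $\tilde\psi$-tail, $\epsilon^{2m-2}r_\epsilon^{1-2m}$ from the $\tilde\Gamma$-tail) matches the paper and correctly identifies the role of $r_\epsilon=\epsilon^{(2m-1)/(2m+1)}$. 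Two small imprecisions worth noting: the phrase that $|z|^2/2-\epsilon^{2m-2}|z|^{4-2m}$ is ``essentially scalar flat'' is a linear-order statement, and the quadratic remainder (the paper's $(L_\eta-L_0)g$ term, controlled via Proposition~\ref{prop:linear2}) is what makes the argument close; and your pointwise bound $c\epsilon^{4m-4}|z|^{4-4m}$ for $Q_\omega(\epsilon^{2m-2}\Gamma)$ on the outer region should be $c\epsilon^{4m-4}|z|^{2-4m}$ (leading term $\sim\nabla^4\Gamma\cdot\nabla^2\Gamma$), giving the weighted-norm contribution $c\epsilon^{4m-4}r_\epsilon^{6-4m-\delta}$ as in the paper. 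Neither affects the conclusion, which holds with room to spare for $m\geqslant 2$.
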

\begin{proof}
	To prove this we look at three
	different pieces of $Bl_pM$ separately, 
	namely $M\setminus B_{2r_\epsilon}$,
	$B_{2r_\epsilon}\setminus B_{r_\epsilon}$ and $B_{r_\epsilon}$. 
	First of all in $B_{r_\epsilon}$ we have $F = \l(s) +
	\epsilon^{2m-2}\l(h)$, but note that by Lemma~\ref{lem:lb}
	\[ \Vert \l(s) \Vert_{C^{0,\alpha}_0}, \quad \Vert
	  \l(h)\Vert_{C^{0,\alpha}_0 } \leqslant c,
	\]
	which implies that $\Vert F\Vert_{C^{0,\alpha}_{\delta-4}
	(B_{r_\epsilon})}
	\leqslant cr_\epsilon^{4-\delta}$. 

	On the set $M\setminus B_{2r_\epsilon}$ note that
	$\omega_\epsilon=\omega$, so $\l(s)=
	\mathbf{s}(\omega_\epsilon)$ and $\l(h) = h$. 
	In addition 
	\[ \mathcal{D}^*_\omega\mathcal{D}_\omega \Gamma =
	  L_\omega\Gamma - \frac{1}{2}X(\Gamma) = h
	\]
	using Equations~(\ref{eq:Gamma}) and (\ref{eq:Lichn}). This
	means that on the set $M\setminus B_{2r_\epsilon}$
	\begin{equation}\label{eq:F1}
		F = -Q_\omega(\epsilon^{2m-2}\Gamma) + \frac{1}{2}
	  \epsilon^{4m-4}\nabla h\cdot\nabla \Gamma. 
	\end{equation}
	It is useful to note that
	$\Vert\gamma_1\Gamma\Vert_{C^{4,\alpha}_w}$
	is bounded by $cr_\epsilon^{4-2m-w}$ for $w > 4-2m$ and by $c$
	for $w < 4-2m$. 
	For the second term in (\ref{eq:F1}) we have
	\[\begin{aligned}
		\epsilon^{4m-4}\Vert \nabla h\cdot \nabla\Gamma
		\Vert_{C^{0,\alpha}_{\delta-4}(M\setminus B_{2r_\epsilon})}
		&\leqslant c\epsilon^{4m-4} \Vert
		\nabla\Gamma\Vert_{C^{0,\alpha}_{\delta-4}} 
		\\
		&\leqslant c\epsilon^{4m-4} \Vert \Gamma\Vert_{
		C^{1,\alpha}_{\delta-3}} \\
		&\leqslant c\epsilon^{4m-4}
		\ll r_\epsilon^{4-\delta},
	\end{aligned}\]
	as long as $\delta$ is close to $4-2m$. 
	For the term involving $Q_\omega$ we use
	Proposition~\ref{prop:linear2} below. Indeed
	\[ \Vert Q_\omega(\epsilon^{2m-2}\Gamma)\Vert_{C^{0,\alpha}_{
	\delta-4}(M\setminus B_{2r_\epsilon})} \leqslant
	c\epsilon^{4m-4} r_\epsilon^{6-4m-\delta}\leqslant
	cr_\epsilon^{4-\delta},\]
	as long as $m\geqslant 2$. 

	Finally on the annulus $A_\epsilon = 
	B_{2r_\epsilon}\setminus B_{r_\epsilon}$ we
	first note that by Equation~(\ref{eq:newmetric}) we have
	\[ \mathbf{s}(\omega_\epsilon) +
	L_{\omega_\epsilon}(\epsilon^{2m-2} \gamma_1\Gamma) +
	Q_{\omega_\epsilon}(\epsilon^{2m-2}\gamma_1\Gamma) =
	\mathbf{s}(\tilde{\omega}_\epsilon).\]
	The other terms in the expression for $F$
	can be dealt with as before, so we only
	need to show that on the annulus $A_\epsilon$
	\[ \Vert \mathbf{s}(\tilde{\omega}_\epsilon)
	\Vert_{C^{0,\alpha}_{\delta-4}} \leqslant
	cr_\epsilon^{4-\delta}.\]
	This is where we use that $\tilde{\omega}$ matches up with the
	Burns-Simanca metric to leading order at $p$.
	We use the formula given in
	Equation~(\ref{eq:annulus}), which we write as
	\[ \tilde{\omega}_\epsilon = \ddbar\left( \frac{|z|^2}{2} +
	g(z)\right). \]
	On $A_\epsilon$ we then have
	\[ \mathbf{s}(\tilde{\omega}) = L_\eta(g),\]
	where $\eta = \ddbar\left(|z|^2/2 + tg(z)\right)$
	for some $t\in[0,1]$. We have
	\[ \mathbf{s}(\tilde{\omega}) = \Delta^2_0 g + (L_\eta -
	L_0)g,\]
	where $L_0 = \Delta^2_0$ is the linearized operator at the flat
	metric. At the same time
	\[ \Vert g \Vert_{C^{4,\alpha}_{\delta}(A_\epsilon)} 
		\leqslant c\epsilon^{2m-2}r_\epsilon^{4-2m-\delta}
		\ll r_\epsilon^{2-\delta},
	\]
	so Proposition~\ref{prop:linear2} implies that
	\[ \Vert L_\eta(g) - L_0(g)\Vert_{C^{0,\alpha}_{\delta
	-4}(A_\epsilon)} \leqslant
	cr_\epsilon^{\delta-2}(\epsilon^{2m-2}
	r_\epsilon^{4-2m-\delta})^2 \leqslant cr_\epsilon^{4-\delta}. \]
	
	Finally for $\Delta_0^2g$ note that $\Delta_0^2(|z|^{4-2m})=0$,
	so writing
	\[ g(z) = -\epsilon^{2m-2}|z|^{4-2m} + \tilde{g}(z)\]
	we have $\Delta_0^2g = \Delta_0^2\tilde{g}$. At the same time
	\[ \Vert \tilde{g} \Vert_{C^{4,\alpha}_\delta
	(A_\epsilon)}
	\leqslant c\epsilon^{2m-1}r_\epsilon^{3-2m-\delta} =
	cr_\epsilon^{4-\delta},\]
	so 
	\[ \Vert \Delta_0^2\tilde{g}\Vert_{C^{0,\alpha}_{
	\delta-4}(A_\epsilon)} \leqslant cr_\epsilon^{4-\delta},\]
	which gives the result we wanted.
\end{proof}

We used the following result, whose proof is identical to that of
Proposition~\ref{prop:linear}. 
\begin{prop}\label{prop:linear2}
	There exist constants $c_0, C>0$ such that if $U\subset Bl_pM$ and 
	$\Vert u\Vert_{C^{4,\alpha}_2}(U) \leqslant c_0$
	then for any $v$ we have	
	\[ \Vert L_{\omega_u}(v) -
	L_{\omega_\epsilon}(v)\Vert_{C^{0,\alpha}_{\delta-4}(U)} \leqslant
	C\Vert u\Vert_{C^{4,\alpha}_2(U)}\Vert
	v\Vert_{C^{4,\alpha}_\delta(U)},\]
	where $\omega_u=\omega_\epsilon + \ddbar u$. It follows
	that
	\[ \Vert Q_{\omega_\epsilon}(u)\Vert_{C^{0,\alpha}_{\delta-4}
	(U)} \leqslant C\Vert u\Vert_{C^{4,\alpha}_2(U)} \Vert
	u\Vert_{C^{4,\alpha}_\delta(U)}. \]
\end{prop}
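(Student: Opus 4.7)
The plan is to carry out the same calculation as in the proof of Proposition~\ref{prop:linear}, carefully observing that every step there is local in nature, so the same bounds hold when all weighted Hölder norms are restricted to the subset $U$. First, writing $g$ and $g_u$ for the Riemannian metrics corresponding to $\omega_\epsilon$ and $\omega_u=\omega_\epsilon+\ddbar u$, I would choose $c_0$ small enough so that $\tfrac12 g < g_u < 2g$ on $U$. The hypothesis $\Vert u\Vert_{C^{4,\alpha}_2(U)} \leqslant c_0$, together with the bilinear multiplication property of weighted Hölder spaces, gives $\Vert g_u\Vert_{C^{2,\alpha}_0(U)} \leqslant c$; applying the schematic formulas for $\partial(g_u^{-1})$ and $\partial^2(g_u^{-1})$ one deduces $\Vert g_u^{-1}\Vert_{C^{2,\alpha}_0(U)} \leqslant c$, and then the identity $g_u^{-1} - g^{-1} = g_u^{-1}(g-g_u)g^{-1}$ yields
\[ \Vert g_u^{-1} - g^{-1}\Vert_{C^{2,\alpha}_0(U)} \leqslant c\,\Vert u\Vert_{C^{4,\alpha}_2(U)}. \]

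With this comparison in place, the schematic identity
\[ \Delta^2_{g_u}v - \Delta^2_g v = (g_u^{-1}-g^{-1})\,\partial^2(g_u^{-1}\partial^2 v) + g^{-1}\partial^2\bigl[(g_u^{-1}-g^{-1})\partial^2 v\bigr] \]
and the bilinear weighted estimate yield the required control on $\Vert \Delta^2_{g_u}v - \Delta^2_g v\Vert_{C^{0,\alpha}_{\delta-4}(U)}$. The Ricci term is handled identically via the Christoffel expansion $\mathrm{Riem}(g) = \partial\Gamma + \Gamma\star\Gamma$ with $\Gamma=g^{-1}\partial g$, bounding $\Vert \Gamma_u - \Gamma\Vert_{C^{1,\alpha}_{-1}(U)}$ by $c\Vert u\Vert_{C^{4,\alpha}_2(U)}$ and hence $\Vert \mathrm{Ric}(g_u) - \mathrm{Ric}(g)\Vert_{C^{0,\alpha}_{-2}(U)}$ by the same token. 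Combining the two contributions to $L_{\omega_u}(v) - L_{\omega_\epsilon}(v) = (\Delta^2_{g_u}-\Delta^2_g)v + (\mathrm{Ric}(g_u)^{i\bar j} - \mathrm{Ric}(g)^{i\bar j})v_{i\bar j}$ proves the first inequality.

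For the second inequality, I would use the mean value argument already employed in the proof of Lemma~\ref{lem:contract}. Differentiating the relation (\ref{eq:Q}) in the reference function shows $DQ_{\omega_\epsilon,\chi} = L_{\omega_\chi} - L_{\omega_\epsilon}$, and since $Q_{\omega_\epsilon}(0)=0$ integration gives
\[ Q_{\omega_\epsilon}(u) = \int_0^1 \bigl(L_{\omega_{tu}} - L_{\omega_\epsilon}\bigr)(u)\,dt. \]
Applying the first part of the proposition with $tu$ in place of $u$, which is permissible since $\Vert tu\Vert_{C^{4,\alpha}_2(U)}\leqslant c_0$ for every $t\in[0,1]$, and integrating in $t$ produces precisely the claimed bound on $\Vert Q_{\omega_\epsilon}(u)\Vert_{C^{0,\alpha}_{\delta-4}(U)}$. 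There is no real obstacle beyond the bookkeeping; the only thing to check is that the weighted Hölder norms on $U$ enjoy the same bilinear multiplication properties as on the full manifold, which is immediate because these norms are defined by suprema that restrict automatically to any open subset.
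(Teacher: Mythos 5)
Your proposal is correct and follows essentially the same route as the paper, which simply notes that the proof is identical to that of Proposition~\ref{prop:linear} carried out locally on $U$; your integral mean-value formula $Q_{\omega_\epsilon}(u)=\int_0^1(L_{\omega_{tu}}-L_{\omega_\epsilon})(u)\,dt$ is just the integrated form of the argument already used in Lemma~\ref{lem:contract} with $\psi=0$. The observation that the weighted norms and their multiplication properties restrict to subsets is exactly the point the paper relies on implicitly.
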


We can now complete the proof of Proposition~\ref{prop:gluing}.  
Let us choose $\delta$ 
close to $4-2m$ with $\delta > 4-2m$ for $m>2$ and $\delta
< 4-2m$ for $m=2$, and let
$r_\epsilon=\epsilon^{\frac{2m-1}{2m+1}}$ as above.
First by Lemma~\ref{lem:F} and our bound on the inverse $P$ from
Proposition~\ref{prop:inv}, 
we have 
\[\Vert\mathcal{N}(0,0)\Vert_{C^{4,\alpha}_\delta} \leqslant
c_1r_\epsilon^{4-\delta}\epsilon^{-\theta}\] 
for some constant  $c_1$ 
independent of $\epsilon$, as
long as $\epsilon$ is sufficiently small. Here $\theta=0$ if $m>2$ and
$\theta=4-2m-\delta$ if $m=2$. Define
\[ S = \{ (v,g) \quad :\quad
\Vert v\Vert_{C^{4,\alpha}_\delta}, |g| \leqslant
2c_1r_\epsilon^{4-\delta}\epsilon^{-\theta}\}.\]
For $(v,g)\in S$ we have 
\[ \Vert v\Vert_{C^{4,\alpha}_2}\leqslant
Cr_\epsilon^{4-\delta}\epsilon^{\delta-2-\theta},\]
so if $\epsilon$ and $\theta$ are small enough then
Lemma~\ref{lem:contract2} implies that 
$\mathcal{N}$ is a contraction
with constant $1/2$ on $S$. In particular $\mathcal{N}$ then maps $S$ to
itself, since if $(v,g)\in S$ then 
\[ \Vert\mathcal{N}(v,g)\Vert \leqslant
\Vert\mathcal{N}(v,g) - \mathcal{N}(0,0)\Vert + \Vert\mathcal{N}(0,0)
\Vert \leqslant \frac{1}{2}\Vert(v,g)\Vert +
c_1r_\epsilon^{4-\delta}\epsilon^{-\theta}
\leqslant 2c_1 r_\epsilon^{4-\delta}\epsilon^{-\theta}.\]
It follows that for small enough $\epsilon$ there is a fixed point of
$\mathcal{N}$ in the set $S$. This gives a solution $(v,g)$ to our
equation, with $|g|\leqslant 2c_1r_\epsilon^{4-\delta}\epsilon^{-\theta}$. 
Finally if
$\delta$ is sufficiently close to $4-2m$, we find that
$r_\epsilon^{4-\delta} < \epsilon^\kappa$ for some $\kappa > 2m-2$.
Hence from the expansion (\ref{eq:expand}) and the fact that
$h=c_m\mu(p)$, we
obtain the required expansion in Equation (\ref{eq:expansion}). 

\subsection{A remark on Conjecture~\ref{conj:stronger}}
A natural problem is to compute more terms in the expansion
(\ref{eq:expand}) of the element $f$ above. Examining the argument we
see that the key point was to first perturb the extremal metric $\omega$
away from $p$, so that it matches up with the Burns-Simanca metric to
higher order. To see what the next term should be we need the following.

\begin{lem}\label{lem:BSmetric}
	If $m\geqslant 3$ then the K\"ahler potential for a suitable
	scaling of the
	Burns-Simanca metric 
	\[\eta=\ddbar(|z|^2/2 + \psi(z))\]
	satisfies
	\begin{equation}\label{eq:metric}
		\psi(z) = -|z|^{4-2m} + a|z|^{2-2m} + O(|z|^{6-4m}),
	\end{equation}
	where $a > 0$. 
\end{lem}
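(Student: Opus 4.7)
The plan is to use the $U(m)$-invariance of $\eta$ to reduce the scalar-flat condition $\mathbf{s}(\eta)=0$ to an ODE for $\psi(r)$, then to expand asymptotically at $r\to\infty$, identify the first two decaying modes, and finally pin down the sign of the coefficient $a$.

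Writing the scalar-flat equation in Cartesian form as $L_{\omega_0}(\psi) + Q_{\omega_0}(\psi) = 0$, where $L_{\omega_0} = \Delta_0^2$ is the linearisation at the flat metric, the first step is to observe that the decaying radial biharmonic functions on $\mathbf{C}^m\setminus\{0\}$ form a two-dimensional space spanned by $r^{4-2m}$ and $r^{2-2m}$ (from the formula $\Delta r^\alpha = \tfrac14\alpha(\alpha+2m-2)r^{\alpha-2}$ on $\mathbf{R}^{2m}$, using $m\geq 3$). From Simanca's construction we already know $\psi = -r^{4-2m} + O(r^{3-2m})$ after the rescaling normalising the leading coefficient to $-1$. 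Plugging $\psi_0 = -r^{4-2m}$ into $Q_{\omega_0}$, a routine quadratic expansion of the scalar curvature in $\ddbar\psi_0$ gives $Q_{\omega_0}(\psi_0) = O(r^{2-4m})$ at infinity. Inverting $\Delta_0^2$ on such a source produces a particular solution of order $r^{6-4m}$, while the remaining homogeneous freedom is precisely a multiple $a\,r^{2-2m}$ of the second kernel element. Together these yield the expansion $\psi = -r^{4-2m} + a\,r^{2-2m} + O(r^{6-4m})$ for some $a\in\mathbf{R}$.

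The hard part is showing $a>0$. Since $r^{2-2m}$ is harmonic on $\mathbf{C}^m\setminus\{0\}$, the asymptotic equation does not fix $a$: it is determined globally by the requirement that $\eta$ extends smoothly across the exceptional divisor $\mathbf{CP}^{m-1}$. Intrinsically, up to a positive dimensional constant, $a$ equals the ADM mass of the ALE scalar-flat K\"ahler manifold $(Bl_0\mathbf{C}^m,\eta)$, and its positivity follows from a positive mass theorem in the ALE setting, using that $2m\geq 6$, the scalar curvature is identically zero with appropriate decay, and $\eta$ is not flat (the exceptional divisor has positive volume). A more direct route, avoiding any appeal to a general positive mass theorem, is to rewrite the scalar-flat equation in momentum coordinates $\tau = s(\tfrac12 + \psi'(s))$ with $s = r^2$: the $U(m)$-symmetry reduces it to a linear ODE for a profile function of $\tau$, and its explicit solution, fixed by the smoothness boundary condition at the core and asymptotic flatness at infinity, yields a closed-form, manifestly positive expression for $a$.
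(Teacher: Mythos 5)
Your derivation of the expansion itself follows essentially the same route as the paper: the decaying $U(m)$-invariant solutions of $\Delta_0^2u=0$ on $\mathbf{C}^m\setminus\{0\}$ are spanned by $|z|^{4-2m}$ and $|z|^{2-2m}$ (indicial roots $4-2m$, $2-2m$), the quadratic remainder $Q(\psi)$ lies in $C^{0,\alpha}_{2-4m}$ once $\psi\in C^{4,\alpha}_{4-2m}$, and inverting $\Delta_0^2$ leaves exactly the homogeneous freedom $a|z|^{2-2m}$ modulo $C^{4,\alpha}_{6-4m}$. (One small point: the estimate should be applied to the full potential $\psi$, not just to $\psi_0=-|z|^{4-2m}$, but this is cosmetic since the difference only improves the decay.)

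The genuine gap is in the positivity of $a$, which is the entire content of the lemma. Route (a) does not work: $a$ is \emph{not} the ADM mass. For a $2m$-real-dimensional ALE metric the mass is read off from the $r^{2-2m}$ term of the \emph{metric}, which comes from the $r^{4-2m}$ term of the \emph{potential} --- precisely the coefficient you have already normalised to $-1$. The term $a|z|^{2-2m}$ perturbs the metric only at order $r^{-2m}$, strictly below the mass term, so no positive mass theorem can detect $a$; and in any case positivity of mass fails for general ALE scalar-flat K\"ahler spaces (LeBrun's examples, reference \cite{LeBrun88} of this paper, have negative mass). Route (b) is the correct idea and is what the paper actually does, but you stop exactly where the work is: the sign of $a$ must be extracted from the explicit ODE with its boundary conditions. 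Concretely, writing $\eta=\ddbar A(s)$ with $s=|z|^2$, $t=s^{-1}$ and $\xi(t)=\partial_sA$, scalar-flatness gives
\[ \xi^{m-1}\xi' - (m-1)t^{m-2}\xi + (m-2)t^{m-1}=0,\qquad \xi(0)=\tfrac12, \]
whose power series is $\xi(t)=\tfrac12+2^{m-2}t^{m-1}-\tfrac{m-2}{m}2^{m-1}t^m+O(t^{m+1})$; integrating in $s$ turns the $t^{m-1}$ term into a negative multiple of $s^{2-m}=|z|^{4-2m}$ and the $t^m$ term into a \emph{positive} multiple of $s^{1-m}=|z|^{2-2m}$, and rescaling to normalise the leading coefficient to $-1$ preserves the sign of the second. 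Without carrying out this (or an equivalent) computation, ``manifestly positive'' is an assertion rather than a proof.
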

\begin{proof}
This can be seen by finding the first few terms in the power series
expansion of the solution of the ODE for scalar flat $U(n)$ invariant
metrics on $Bl_0\mathbf{C}^m$, written down in~\cite{AP06}, Section 7 
(see also~\cite{Sim91}). Following \cite{AP06}
let us write $\eta=\ddbar A(|z|^2)$ and let $s=|z|^2$. Let us also
introduce the variable $t=s^{-1}$ and define the function
$\xi(t)=\partial_s A(s)$. From the equations given in \cite{AP06} one
can check that $\xi$ satisfies the equation
\[ \xi^{m-1}(t)\xi'(t) - (m-1)t^{m-2}\xi(t) + (m-2)t^{m-1} = 0.\]
Moreover we want $\xi(0)=1/2$. It is then straightforward to check that
the first few terms in the expansion of $\xi$ around $t=0$ are
\[ \xi(t) = \frac{1}{2} + 2^{m-2}t^{m-1} - \frac{m-2}{m}2^{m-1}t^m +
O(t^{m+1}).\]
From this we can recover $A(s)$, and finally by scaling the variable $z$
and the metric, we obtain the first two terms in (\ref{eq:metric}), with
$a>0$. 

To show that the next term is $O(|z|^{6-4m})$ we can either compute more
terms in the expansion of $\xi$, or instead we can follow the argument
in \cite{AP06}, Lemma 7.2. The scalar curvature of $\eta=\ddbar(|z|^2/2
+ \psi(z))$ is given by
\[ \mathbf{s}(\eta) = \Delta^2\psi + Q(\psi),\]
where $\Delta$ is the Laplacian for the flat metric (we use the K\"ahler
Laplacian and half the Riemannian scalar curvature, so the coefficient
of $\Delta^2$ differs from that in \cite{AP06}). It is shown
in~\cite{AP06} that if
$\psi\in C^{4,\alpha}_\delta(Bl_0\mathbf{C}^m)$ for some $\delta < 2$
then 
\[ Q(\psi)\in C^{0,\alpha}_{2\delta - 6}(Bl_0\mathbf{C}^m).\]
If $\mathbf{s}(\eta)=0$ then
also $\Delta^2\psi\in C^{0,\alpha}_{2\delta-6}$, so from the
regularity theory for the Laplacian acting between weighted
spaces we get that 
\begin{equation}\label{eq:exp}
	\psi \in C^{4,\alpha}_{2\delta-2} \oplus \mathrm{span}\{1,
|z|^{4-2m}, |z|^{2-2m}\}.
\end{equation}
The reason why we only get these powers of $|z|$ is that these (together
with $|z|^2$) are the only $U(n)$ invariant elements in the kernel of
$\Delta^2$. Or in other words if we work with $U(n)$ invariant spaces
then the indicial roots are $2,0,4-2m$ and $2-2m$. Subtracting a constant we
can therefore suppose that $\psi\in C^{4,\alpha}_{4-2m}$ so we can apply
the above with $\delta=4-2m$. Then $2\delta-2=6-4m$ so the result
follows from (\ref{eq:exp}). 
\end{proof}

In order to match with the metric $\epsilon^2\eta$ we therefore need to
perturb $\omega$ to $\omega+\ddbar\Gamma$, where 
\[ \Gamma = -\epsilon^{2m-2}|z|^{4-2m} + \epsilon^{2m}a|z|^{2-2m} +
\text{lower order terms}\]
and $\mathcal{D}_\omega^*\mathcal{D}_\omega\Gamma = h$ on
$M\setminus\{p\}$ for some $h\in\overline{\mathfrak{h}}$. 
By changing the lower
order terms if necessary (we can use a term of the order of
$\epsilon^{2m}|z|^{4-2m}$ to cancel the contribution of
$\mathrm{Ric}^{i\bar j}\Gamma_{i\bar j}$) we can assume that $\Gamma$ is
a distributional solution of 
\[ \mathcal{D}_\omega^*\mathcal{D}_\omega\Gamma = h -
\epsilon^{2m-2}c_m\delta_p - \epsilon^{2m}ac_m'\Delta\delta_p,\]
where $c_m,c_m'>0$ are constants depending on the dimension. Taking the
$L^2$ product of both sides with all $g\in\overline{\mathfrak{h}}$ as before, 
we find that
\[ h = \epsilon^{2m-2}(\lambda+c_m\mu(p)) +
\epsilon^{2m}ac_m'\Delta\mu(p),\]
where $\lambda=\mathrm{Vol}(M)^{-1}c_m$. 
Under the assumptions of Conjecture~\ref{conj:stronger}, if $\epsilon$
is sufficiently small we can assume that $h=\epsilon^{2m-2}\lambda$. 
It seems reasonable to
expect that one can deform this metric $\omega+\ddbar\Gamma$ to a cscK
metric, but we have not been successful with this so far.  Note also
that when $m=2$, then the potential for the Burns-Simanca metric is
given by $\psi=\log|z|$ with no lower order terms, so it is not clear
where the expression $\mu(p) \pm \epsilon\Delta\mu(p)$, which we see in
the algebro-geometric calculations in the next section, comes from in this
case.

\section{K-stable blowups}\label{sec:stability}
In this section we give the proof of Theorem~\ref{thm:stable}, which is
an extension of a result in Stoppa~\cite{Sto10}. 
First we need to review the notion of K-stability
introduced by Donaldson~\cite{Don02}. 

Let $L\to M$ be an ample line bundle. A \emph{test-configuration} 
for the pair
$(M,L)$ is a flat $\mathbf{C}^*$-equivariant family $\pi:\mathcal{M}\to 
\mathbf{C}$ together with a $\mathbf{C}^*$-equivariant relatively ample
line bundle $\mathcal{L}\to\mathcal{M}$, such that the fiber
$(\pi^{-1}(1), \mathcal{L}|_{\pi^{-1}(1)})$ is isomorphic to $(M,L^r)$
for some $r > 0$. Let us denote by $\alpha$ the induced 
$\mathbf{C}^*$-action on the
central fiber $(M_0,L_0)$. This gives rise to a
$\mathbf{C}^*$-action on the space of sections $H^0(M_0,L_0^k)$ for each
$k$. Let us write $d_k = \dim H^0(M_0,L_0^k)$ and $w_k$ for the total
weight of the action on $H^0(M_0,L_0^k)$. Define the numbers
$a_0,a_1,b_0,b_1$ to be the coefficients in the expansions
\[ \begin{gathered}
	d_k = a_0k^m + a_1k^{m-1} + \ldots \\
	w_k = b_0k^{m+1} + b_1k^m + \ldots,
\end{gathered}\]
valid for large $k$. We define the \emph{Futaki invariant} of the action
$\mathbf{C}^*$-action $\alpha$ on $(M_0,L_0)$ to be 
\[ \mathrm{Fut}(\alpha, M_0, L_0) = \frac{a_1}{a_0}b_0 - b_1.\]
Since the $\mathbf{C}^*$-action is induced by the test-configuration
$(\mathcal{M},\mathcal{L})$,
we also write
\[ \mathrm{Fut}(\mathcal{M},\mathcal{L}) = \mathrm{Fut}(\alpha,M_0,L_0). \]  
\begin{defn}
	The polarized manifold $(M,L)$ is \emph{K-polystable} if for all
	test-con\-fi\-gu\-rations
	$\mathrm{Fut}(\mathcal{M},\mathcal{L})\geqslant 0$ with equality
	only if the central fiber of the test-configuration
	is isomorphic to $M$.
\end{defn}

We want to study the K-stability of the blowup $Bl_pM$ with the
polarization $\pi^*L-\epsilon E$ for sufficiently small $\epsilon$. For
this the key calculation is to compute the Futaki invariants of
$\mathbf{C}^*$-actions on $Bl_pM$. 

Let us fix a K\"ahler metric $\omega\in c_1(L)$, and suppose that the
vector field 
$v = J\nabla h$ generates a holomorphic $S^1$-action for some $h\in
C^\infty(M)$. 
Replacing $L$ by a large power if necessary, a choice of $h$ gives rise to a
lifting of the $S^1$-action to $L$ which can be extended to a
holomorphic $\mathbf{C}^*$-action. We define the Futaki invariant
of $v$ with the same formula as above. 
If the vector field $v$ vanishes at $p$ then it has a
holomorphic lift $\tilde{v}$ to $Bl_pM$. Consider the $\mathbf{Q}$-line
bundle $L_\epsilon = \pi^*L-\epsilon E$ on $Bl_pM$ for small rational
$\epsilon$, where $E$ is the
exceptional divisor. We have a
$\mathbf{C}^*$-action on the space of sections $H^0(Bl_pM,
L_\epsilon^k)$, and so we can define constants $\tilde{a}_0,\tilde{a}_1,
\tilde{b}_0, \tilde{b}_1$ corresponding to this action as above, which
depend on $\epsilon$ (if we take $k$ for which $k\epsilon$
is an integer, then $L_\epsilon^k$ is a line bundle). 
The following lemma is an extension of the calculation in 
Stoppa~\cite{Sto10}.

\begin{lem}\label{lem:weights}
	For the action on the blowup we have
	\[\begin{aligned}
		\tilde{a}_0 &= a_0 - \frac{\epsilon^m}{m!} \\
		\tilde{a}_1 &= a_1 - \frac{\epsilon^{m-1}}{2(m-2)!} \\
		\tilde{b}_0 &= b_0 + \frac{\epsilon^m}{m!}h(p) +
		\frac{\epsilon^{m+1}}{(m+1)!}\Delta h(p) \\
		\tilde{b}_1 &= b_1 + \frac{\epsilon^{m-1}}{2(m-2)!}h(p)
		+\frac{(m-2)\epsilon^m}{2m!}\Delta h(p),
	\end{aligned}\]
	where $\Delta h$ is the Laplacian of $h$ with respect to the
	metric $\omega$.
\end{lem}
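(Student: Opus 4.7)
The plan is to use the ideal-sheaf exact sequence together with the projection formula, which reduces the problem to an equivariant calculation on the local jet space at $p$. For $k$ large enough that $k\epsilon\in\mathbf{Z}_{>0}$ and all higher cohomology vanishes, the identity $\pi_*\mathcal{O}_{Bl_pM}(-k\epsilon E)=\mathcal{I}_p^{k\epsilon}$ gives $H^0(Bl_pM,L_\epsilon^k)\cong H^0(M,L^k\otimes\mathcal{I}_p^{k\epsilon})$, and the exact sequence
\[ 0\to L^k\otimes\mathcal{I}_p^{k\epsilon}\to L^k\to L^k\otimes(\mathcal{O}_M/\mathcal{I}_p^{k\epsilon})\to 0 \]
is $\mathbf{C}^*$-equivariant. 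Taking equivariant Euler characteristics yields both a dimension identity $\tilde d_k = d_k - J_k$ and a weight identity $\tilde w_k = w_k - W_k$, where $J_k$ and $W_k$ are the dimension and total weight of the action on the jet space $\bigoplus_{j=0}^{k\epsilon-1}S^jT_p^*M\otimes L_p^k$ at the fixed point $p$.

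For $\tilde a_0$ and $\tilde a_1$ one has $J_k=\binom{k\epsilon+m-1}{m}$, and expanding
\[ \binom{N+m-1}{m}=\frac{N^m}{m!}+\frac{N^{m-1}}{2(m-2)!}+O(N^{m-2}),\qquad N=k\epsilon, \]
immediately gives the stated formulas. (Equivalently, one may apply asymptotic Riemann-Roch on $Bl_pM$ using $K_{Bl_pM}=\pi^*K_M+(m-1)E$, $L_\epsilon=\pi^*L-\epsilon E$, $E^m=(-1)^{m-1}$ and $\pi^*\alpha\cdot E=0$.)

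The computation of $\tilde b_0,\tilde b_1$ proceeds by identifying the local weight data at $p$. The chosen $\mathbf{C}^*$-lift acts on $L_p$ with weight $h(p)$, while the linearization at $p$ of the generating field $v=J\nabla h$ is an endomorphism $A\in\mathrm{End}(T_pM)$ whose trace equals $\Delta h(p)$ up to a fixed sign (verified by writing $v^i=ig^{i\bar j}h_{\bar j}$ in normal complex coordinates and computing $\mathrm{tr}\,\partial v$). A symmetry argument on a monomial basis of $S^jT_p^*M$ shows that the total weight on this piece is $-\tfrac{j}{m}\binom{j+m-1}{m-1}\mathrm{tr}(A)$, so that
\[ W_k = \sum_{j=0}^{k\epsilon-1}\binom{j+m-1}{m-1}\Bigl(kh(p)+\tfrac{j}{m}\Delta h(p)\Bigr). \]
Using $j\binom{j+m-1}{m-1}=m\binom{j+m-1}{m}$ together with the hockey-stick identity collapses both sums into $\binom{k\epsilon+m-1}{m}$ and $\binom{k\epsilon+m-1}{m+1}$, and expanding these to two leading orders in $k$ exactly produces the four claimed jet contributions to $\tilde b_0$ and $\tilde b_1$.

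The main obstacle is not any single analytic step but the uniform bookkeeping of signs: one must coordinate (i) the sign of the weight of the lifted $\mathbf{C}^*$-action on $L_p$, (ii) the sign relating $\mathrm{tr}(A)$ to $\Delta h(p)$, and (iii) the direction of subtraction $\tilde w_k=w_k-W_k$, so that the result is consistent with the sign convention already fixed for $b_0,b_1$ on $M$. A clean way to enforce consistency is Atiyah-Bott equivariant localization on $Bl_pM$ at the fixed $\mathbf{P}^{m-1}$ over $p$, which assembles all three signs into a single canonical equivariant index formula; once set up, the expansion of the resulting integrals in $\epsilon$ and $k$ reproduces the four claimed formulas simultaneously and removes any ambiguity.
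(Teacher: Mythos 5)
Your proposal follows essentially the same route as the paper: the ideal-sheaf exact sequence reducing everything to the jet space $\mathcal{O}_{k\epsilon p}=\bigoplus_{j<k\epsilon}S^jT_p^*$, the binomial counts $\dim\mathcal{O}_{lp}=\binom{l+m-1}{m}$ and $w(\mathcal{O}_{lp})=\binom{l+m-1}{m+1}w$, and the two-term expansion in $l=k\epsilon$; the combinatorics are all correct. The only point of divergence is the sign issue you yourself flag: as literally displayed, $W_k=\sum_j\binom{j+m-1}{m-1}\bigl(kh(p)+\tfrac{j}{m}\Delta h(p)\bigr)$ together with $\tilde w_k=w_k-W_k$ would produce $\tilde b_0=b_0-\tfrac{\epsilon^m}{m!}h(p)-\cdots$, the negative of the claim. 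The paper pins this down without localization simply by fixing the convention that the real part of the $\mathbf{C}^*$-action is generated by $\nabla h$, so the weight on the fiber $L_p$ is $-h(p)$ and the action on $T_p$ is the Hessian of $h$, whence the weight on $T_p^*$ is $w=-\Delta h(p)$; the subtraction $\tilde w_k=w_k-w(\mathcal{O}_{k\epsilon p}\otimes L^k|_p)$ then flips both signs and yields the stated $+$ contributions. Your localization suggestion would also work but is heavier machinery than needed.
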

\begin{proof}
	Let us write $\mathcal{I}_p$ for the ideal sheaf of $p\in M$.
	For large $k$ we have an isomorphism
	\[ H^0(Bl_pM, L_\epsilon^k) = H^0(M,
	\mathcal{I}_p^{k\epsilon}L^k).\]
	To study this space, we use the exact sequence 
	\[ 0\longrightarrow \mathcal{I}_p^{k\epsilon}L^k \longrightarrow
	L^k \longrightarrow \mathcal{O}_{k\epsilon p}\otimes L^k|_p
	\longrightarrow 0.\]
	As before, let us write $d_k$ and $w_k$ for the dimension of
	$H^0(M,L^k)$ and the weight of the action on this space.
	Similarly write $\tilde{d}_k$ and $\tilde{w}_k$ for the
	dimension of, and weight of the action on, $H^0(Bl_pM,
	L_\epsilon^k)$. From the exact sequence we have
	\begin{equation}\label{eq:dkwk}
		\begin{aligned}
		\tilde{d}_k &= d_k - \dim \mathcal{O}_{k\epsilon p} \\
		\tilde{w}_k &= w_k - w(\mathcal{O}_{k\epsilon p}\otimes
		L^k|_p).
	\end{aligned}\end{equation}
	Here the weight $w(\mathcal{O}_{k\epsilon p}\otimes L^k|_p)$ is
	given by
	\[ w(\mathcal{O}_{k\epsilon p}\otimes L^k|_p) =
	w(\mathcal{O}_{k\epsilon p}) - kh(p)\dim(\mathcal{O}_{
	k\epsilon p}),\]
	since the weight of the action on the fiber $L_p$ is 
	$-h(p)$. The sign here depends on our convention that the real
	part of the
	$\mathbf{C}^*$-action corresponding to $h$ is generated by
	$\nabla h$. 
	
	We can think of $\mathcal{O}_{lp}$ for an integer $l>0$ as being
	the space of $(l-1)$-jets of functions at $p$, ie.
	\[ \mathcal{O}_{lp} = \mathbf{C}\oplus T_p^*\oplus\ldots\oplus
	S^{l-1}T_p^*,\]
	where $S^i$ is the $i^\mathrm{th}$ symmetric product. The
	dimension of $\mathcal{O}_{lp}$ is therefore given by
	\[ \dim(\mathcal{O}_{lp}) = \binom{m+l-1}{m} =
	\frac{1}{m!}\left( l^m + \frac{m(m-1)}{2} l^{m-1} +
	O(l^{m-2})\right).\]
	Similarly if we write $w$ for the weight of the action on
	$T_p^*$, then we can compute that
	\[ w(\mathcal{O}_{lp}) = \binom{m+l-1}{m+1}w = \frac{w}{(m+1)!}
	\left( l^{m+1} + \frac{(m-2)(m+1)}{2}l^m + O(l^{m-1})\right).\]
	Substituting $k\epsilon$ for $l$ and using the formulas
	(\ref{eq:dkwk}) we get
	\[ \begin{aligned}
		\tilde{d}_k &= d_k - \frac{\epsilon^m}{m!} k^m -
		\frac{\epsilon^{m-1}}{2(m-2)!} k^{m-1} + O(k^{m-2}) \\
		\tilde{w}_k &= w_k -
		\left(\frac{w\epsilon^{m+1}}{(m+1)!} -
		\frac{h(p)\epsilon^m}{m!}\right)k^{m+1} \\
		&\quad -
		\left(\frac{(m-2)w\epsilon^m}{2m!} -
		\frac{h(p)\epsilon^{m-1}}{2(m-2)!}\right)k^m +
		O(k^{m-1})
	\end{aligned}\]
	The only thing that remains is to see that the weight of the
	action on $T_p^*$ is given by $w=-\Delta h(p)$. This follows from
	the fact that by our convention 
	the induced action on the tangent space $T_p$ is
	given by the Hessian of $h$ at $p$.
\end{proof}

A simple calculation then gives
\begin{cor}\label{cor:Fblowup}
	If the Futaki invariant $\mathrm{Fut}(v,M,L)=0$ on $M$
	then on the blowup we have
	\begin{equation}\label{eq:Fut} \begin{aligned}
		\mathrm{Fut}(\tilde{v}, &Bl_pM, L_\epsilon) = 
		\frac{\tilde{a}_1}{\tilde{a}_0}\tilde{b}_0-\tilde{b}_1
		\\
	  	&= -\frac{\epsilon^{m-1}}{2(m-2)!}h(p) -
		\frac{\epsilon^m}{m!}
		\left( \frac{m-2}{2}\Delta h(p) - \frac{a_1}{a_0}h(p)\right) +
		O(\epsilon^{m+1}),
	\end{aligned}\end{equation}
	if $m\geqslant 3$,
	\[ \begin{aligned}
		\mathrm{Fut}(\tilde{v}, Bl_pM, L_\epsilon) &=
		-\frac{\epsilon}{2}h(p) + \frac{\epsilon^2a_1}{2a_0}
		h(p)+\frac{\epsilon^3}{2a_0}\left(
		\frac{a_1}{3}\Delta h(p)- \frac{h(p)}{2}\right) + O(\epsilon^4),
	\end{aligned}\]
	if $m=2$ and $a_1\not=0$ and finally
	\[ \begin{aligned}
		\mathrm{Fut}(\tilde{v}, Bl_pM, L_\epsilon) &=
		-\frac{\epsilon}{2}h(p) +\frac{\epsilon^3}{4a_0}h(p) -
		\frac{\epsilon^4}{12a_0}\Delta h(p) + O(\epsilon^5),
	\end{aligned}\]
	if $m=2$ and $a_1=0$. 
	
	In each case if $h(p)=\Delta h(p)=0$, then
	$\mathrm{Fut}(\tilde{v}, Bl_pM, L_\epsilon)=0$ for all $\epsilon$. 
\end{cor}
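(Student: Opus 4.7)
The plan is a direct substitution of the formulas from Lemma~\ref{lem:weights} into the Futaki-invariant formula $\tilde{a}_1 \tilde{b}_0/\tilde{a}_0 - \tilde{b}_1$, followed by a power-series expansion in $\epsilon$ and an appeal to the vanishing hypothesis.

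First I would record that under the zero-mean convention used throughout the paper, namely that $\langle \mu, v\rangle = h$ has zero integral against $\omega^m$, a standard Riemann--Roch calculation yields $b_0 = 0$; combined with the hypothesis $\mathrm{Fut}(v, M, L) = a_1 b_0/a_0 - b_1 = 0$ this forces $b_1 = 0$ as well. So in $\tilde{b}_0$ and $\tilde{b}_1$ from Lemma~\ref{lem:weights} only the correction pieces involving $h(p)$ and $\Delta h(p)$ remain. I would then invert $\tilde{a}_0 = a_0(1 - \epsilon^m/(a_0 m!))$ as a geometric series $\tilde{a}_0^{-1} = a_0^{-1}(1 + \epsilon^m/(a_0 m!) + O(\epsilon^{2m}))$ and multiply out $\tilde{a}_1 \tilde{b}_0 \tilde{a}_0^{-1} - \tilde{b}_1$, collecting terms order by order in $\epsilon$.

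When $m \geqslant 3$ the computation is transparent: the $\epsilon^{m-1}$ contribution comes entirely from the $h(p)$ piece of $\tilde{b}_1$, giving $-h(p)/(2(m-2)!)$, while the $\epsilon^m$ contribution gathers the terms $a_1 h(p)/m!$ (from $a_1$ against the $h(p)$-correction in $\tilde{b}_0$) and $-(m-2)\Delta h(p)/(2m!)$ (from the $\Delta h(p)$-correction in $\tilde{b}_1$), while the geometric-series correction from $\tilde{a}_0^{-1}$ first enters at $O(\epsilon^{2m-1}) = O(\epsilon^{m+1})$. The case $m=2$ is more delicate, because $\tilde{a}_0^{-1}$ now expands in powers of $\epsilon^2$ whose subleading terms contribute at the same orders as the corrections in $\tilde{b}_i$, so one must carry the expansion through to order $\epsilon^3$ in general and through to order $\epsilon^4$ in the sub-case $a_1 = 0$. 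The crucial algebraic observation is that the $\epsilon^3 h(p)/4$ contributions produced by $(-\epsilon/2)(\epsilon^2 h(p)/2)$ in $\tilde{a}_1 \tilde{b}_0$ and by $(-\epsilon^2/2)(\epsilon h(p)/2)$ in $\tilde{a}_0 \tilde{b}_1$ cancel exactly in the numerator, so that when $a_1 \neq 0$ the subleading behaviour is governed by the $\epsilon^2 a_1 h(p)/(2a_0)$ term, whereas when $a_1 = 0$ the surviving $\epsilon^3$ contribution comes from the $\epsilon^2/(2a_0)$ term in the expansion of $\tilde{a}_0^{-1}$ acting on the leading $-a_0\epsilon h(p)/2$ piece.

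The main obstacle is purely one of bookkeeping: keeping track of signs and of which powers of $\epsilon$ from $\tilde{a}_0^{-1}$, $\tilde{a}_1$, $\tilde{b}_0$, $\tilde{b}_1$ combine at each target order; there is no conceptual difficulty. Finally, the concluding assertion that $\mathrm{Fut}(\tilde{v}, Bl_pM, L_\epsilon) = 0$ for all $\epsilon$ whenever $h(p) = \Delta h(p) = 0$ is immediate from Lemma~\ref{lem:weights}: in that case $\tilde{b}_0 = b_0 = 0$ and $\tilde{b}_1 = b_1 = 0$, so $\mathrm{Fut}(\tilde{v}, Bl_pM, L_\epsilon) = \tilde{a}_1 \cdot 0/\tilde{a}_0 - 0 = 0$ for every $\epsilon$.
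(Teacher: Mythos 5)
Your proposal is correct and is exactly the ``simple calculation'' the paper invokes without writing out: substitute Lemma~\ref{lem:weights} into $\tilde{a}_1\tilde{b}_0/\tilde{a}_0-\tilde{b}_1$ and expand in $\epsilon$. Your preliminary observation that the zero-mean normalization gives $b_0=0$ (hence $b_1=0$ from $\mathrm{Fut}(v,M,L)=0$) is not just convenient but necessary --- without it an extra $-\frac{\epsilon^{m-1}}{2(m-2)!}\frac{b_0}{a_0}$ term would survive and the final assertion of the corollary would fail --- so it is good that you made it explicit. One remark: your derivation in the $m=2$, $a_1=0$ case produces $-\frac{\epsilon^3}{4a_0}h(p)$, whereas the paper prints $+\frac{\epsilon^3}{4a_0}h(p)$; since setting $a_1=0$ in the paper's own $a_1\neq 0$ formula also gives the minus sign, the printed $a_1=0$ formula appears to contain a sign typo and your computation is the consistent one.
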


Combining this with Proposition~\ref{prop:HM} from
Section~\ref{sec:deform} we can prove Theorem~\ref{thm:stable}.

\begin{proof}[Proof of Theorem~\ref{thm:stable}]
	Let us assume that $m>2$, since the argument in the $m=2$ case
	is essentially identical. 
	We argue by contradiction. We can also assume that $p$ is
	semistable with respect to the polarization $L$ since if it
	were strictly unstable, then Stoppa's result~\cite{Sto10} implies
	that the pair $(Bl_pM, L_\epsilon)$ is K-unstable for all
	sufficiently small $\epsilon$.
	
	So suppose that  $p$ is semistable with respect to $L$, but it
	is not polystable with respect to $L+\epsilon K_M$ 
	for sufficiently small $\epsilon$. Then by
	Proposition~\ref{prop:HM} there exists a one-parameter subgroup
	$\lambda$ such that $\lim\limits_{t\to 0}\lambda(t)\cdot p=q$
	and $w_L(q,\lambda)=0$, but $w_{K_M}(q,\lambda)\leqslant 0$. 
	By blowing up the trivial
	family $M\times\mathbf{C}$ in the closure of the
	$\mathbf{C}^*$-orbit 
	of $(p,1)$ under the action $t(p,1)=(\lambda(t)p,t)$ we 
	obtain a test-configuration
	for $(Bl_pM, L_\epsilon)$ with central fiber $(Bl_qM,
	L_\epsilon)$, and the action on $Bl_qM$ is simply the lifting of
	the action $\lambda$. Suppose that this $\lambda$ is
	generated by a holomorphic vector field with Hamiltonian
	function $h$. Then $w_L(q,\lambda)=-h(q)=0$  
	and $w_{K_M}(q,\lambda)=-\Delta h(q)\leqslant 0$. 
	If $-\Delta h(q)< 0$ then from the formula (\ref{eq:Fut}) we
	see that for small enough $\epsilon$ the Futaki invariant of
	this test-configuration is negative. Whereas if $\Delta h(q)=0$
	then we get that the Futaki invariant is zero. In both cases
	this means that $(Bl_pM,L_\epsilon)$ is not K-polystable.

	To deal with the different situations when $m=2$, note that by
	the Hirzebruch-Riemann-Roch formula 
	$a_1 = -\frac{1}{2}K_M\cdot L$.
\end{proof}

To relate GIT stability with respect to $L+\delta K_M$ (or
$L-\delta K_M$) to
moment maps, we need the following.
\begin{lem}\label{lem:LK}
	Let $X$ be a holomorphic Killing field on $(M,\omega)$. If
	$\iota_X\omega = dh$, then $\iota_X\rho = -d\Delta h$, where
	$\rho$ is the Ricci form. It follows that if $p$ is polystable
	with respect to the polarization $L+\delta K_M$ for the action
	of the Hamiltonian isometry group $H$, then there
	is a point $q$ in the complex orbit $H^c\cdot p$ such that
	$\mu(q)+\delta\Delta\mu(q)=0$. 
\end{lem}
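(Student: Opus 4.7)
The lemma splits into two claims: the pointwise identity $\iota_X\rho = -d\Delta h$, and the resulting stability statement via Kempf--Ness.

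For the identity, I would begin by noting that since $X$ is holomorphic and Killing it preserves $\omega$, $J$ and the metric, hence also $\rho$, so $\mathcal{L}_X\rho = 0$ and Cartan's formula gives $d\iota_X\rho = 0$. To pin down the primitive explicitly I would work in local holomorphic coordinates. Writing $\omega = ig_{i\bar j}\,dz^i\wedge d\bar z^j$, the equation $\iota_X\omega = dh$ gives $X^i = -ig^{i\bar j}\partial_{\bar j}h$, and holomorphicity of $X$ gives $\partial_{\bar k}X^i = 0$. Contracting $X$ into $\rho = -i\partial\bar\partial\log\det g$ and simplifying using the K\"ahler identities together with the holomorphicity of $X$ then yields $\iota_X\rho = -d\Delta h$, where $\Delta$ is the complex Laplacian used throughout the paper. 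This identity is in fact already implicit in the proof of Lemma~\ref{lem:weights} above, where the weight of the induced $\mathbf{C}^*$-action on $T^*_pM$ at a fixed point is identified with $-\Delta h(p)$, which is precisely the value at $p$ of the Hamiltonian of $X$ with respect to $\rho$.

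For the stability consequence, the key point is that with the standard convention $c_1(K_M) = -c_1(M)$ the form $-\rho$ represents $2\pi c_1(K_M)$, so the K\"ahler class of $L+\delta K_M$ contains the form $\omega-\delta\rho$. Combining $\iota_X\omega = dh$ with the identity just proved, $X$ has Hamiltonian $h+\delta\Delta h$ for the form $\omega-\delta\rho$. Since $\int_M \Delta h\,\omega^m = 0$ by Stokes, the zero-mean normalisation of Hamiltonians is preserved, so running this over a basis of $\mathfrak{h}$ shows that $\mu+\delta\Delta\mu$ is the moment map for the $H$-action on $(M,\omega-\delta\rho)$. The Kempf--Ness theorem, in the form used in Section~\ref{sec:deform} (see \cite{KN79,GSz04}), then identifies polystability of $p$ with respect to the linearisation $L+\delta K_M$ with the existence of a point $q\in H^c\cdot p$ at which $(\mu+\delta\Delta\mu)(q)=0$.

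The only real subtlety lies in the bookkeeping of signs and of the complex-versus-real Laplacian convention in establishing $\iota_X\rho = -d\Delta h$; once that is in place, the moment-map interpretation of $L+\delta K_M$ and the invocation of Kempf--Ness are direct.
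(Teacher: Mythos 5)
Your proposal is correct and follows essentially the same route as the paper: you observe that $\mathcal{L}_X\rho=0$ forces $d\iota_X\rho=0$, identify a primitive of $\iota_X\rho$, and then combine the resulting formula $\mu+\delta\Delta\mu$ as moment map for $\omega-\delta\rho\in c_1(L+\delta K_M)$ with Kempf--Ness. The only cosmetic difference is in how the pointwise identity is established: you sketch a local-coordinate contraction of $X^i=-ig^{i\bar j}\partial_{\bar j}h$ into $\rho=-i\partial\bar\partial\log\det g$ and appeal to the K\"ahler identities (leaving the actual simplification unchecked), whereas the paper gives the invariant chain $2\iota_X\rho=\iota_X(dJd\log\det\omega)=-d\iota_X(Jd\log\det\omega)=d\Lambda\mathcal{L}_{JX}\omega$ and then uses $\mathcal{L}_{JX}\omega=-2\ddbar h$; the two are interchangeable and your fixed-point cross-check against the weight $-\Delta h(p)$ in Lemma~\ref{lem:weights} is a sound consistency check.
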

\begin{proof}
	We can compute
	\[ \begin{aligned}
		2\iota_X\rho &= \iota_X(dJd\log\det(\omega)) 
		=-d\iota_X(Jd\log\det(\omega)) \\
		&=d(\mathcal{L}_{JX}\log\det(\omega)) 
		=d\Lambda \mathcal{L}_{JX}\omega,
	\end{aligned}\]
	where $\mathcal{L}$ is the Lie derivative and $\Lambda$ is
	taking the trace with respect to $\omega$. Since
	$\mathcal{L}_{JX}\omega = -2\ddbar h$, we get $\iota_X\rho =
	-d\Delta h$, which is what we wanted.

	It follows that if $\mu$ is a moment map for the action of $H$
	with respect to the symplectic form $\omega$ then
	$\mu+\delta\Delta\mu$ is a moment map with respect to
	$\omega-\delta\rho$. Since $\omega-\delta\rho\in c_1(L+\delta
	K_M)$, the second statement in the lemma follows from the
	Kempf-Ness theorem.
\end{proof}

Finally we give the proof of Corollary~\ref{cor:KE}.
\begin{proof}[Proof of Corollary~\ref{cor:KE}]
We will prove the following three implications.
	\begin{itemize}
		\item[(i)$\Rightarrow$(ii)] This follows from the main
			theorem in~\cite{SSz09}.
		\item[(ii)$\Rightarrow$(iii)] This follows from
			Theorem~\ref{thm:stable}, even though we are
			using a more restrictive version of
			K-polystability. The reason is that in finite
			dimensions when using the Hilbert-Mumford
			criterion for testing stability of a point, it
			is enough to look at one-parameter subgroups
			which commute with a torus fixing the point.
			This follows for example from the theory of
			optimal destabilizing one-parameter subgroups
			(see Kempf~\cite{Kem78}), or also from the
			Kempf-Ness theorem and the observation that
			$\mu(p)$ is always in the center of the
			stabilizer of $p$ since $\mu$ is equivariant. 
		\item[(iii)$\Rightarrow$(i)] This follows from
			Theorem~\ref{thm:main}. Namely if $p\in M$ is
			GIT polystable then by replacing $p$ by a
			different point in its $H^c$-orbit, we can
			assume that $\mu(p)=0$, so $Bl_pM$ admits
			an extremal metric in the class
			$\pi^*[\omega]-\epsilon[E]$ for small $\epsilon$. 
			Since $(M,\omega)$ is K\"ahler-Einstein, the
			Futaki invariant of any vector field on $M$ is
			zero with respect to the class $[\omega]$. Since
			$\mu(p)=0$ and $\rho=\omega$, Lemma~\ref{lem:LK}
			implies that $\Delta\mu(p)=0$, so from
			Corollary~\ref{cor:Fblowup} the Futaki invariant
			of every vector field on $Bl_pM$ vanishes in the
			classes $\pi^*[\omega]-\epsilon[E]$. This means
			that the extremal metric we obtain actually has
			constant scalar curvature.
	\end{itemize}
\end{proof}

\providecommand{\bysame}{\leavevmode\hbox to3em{\hrulefill}\thinspace}
\providecommand{\href}[2]{#2}

\bigskip
\noindent{\sc Columbia University\\ New York}
\\
\noindent{\tt gabor@math.columbia.edu}

\end{document}